\documentclass[10pt]{amsart}
\usepackage[utf8]{inputenc}
\usepackage{hyperref}
\usepackage[letterpaper,centering,text={5.5in,8.5in}]{geometry}
\usepackage[all]{xy}
\usepackage{log-cy}
\usepackage{graphicx}

\usepackage{setspace}


\title{On the Symplectic Cohomology of Log Calabi--Yau Surfaces}
\author{James Pascaleff} 
\subjclass[2010]{Primary 53D40; Secondary 53D37, 14J33}
\keywords{Symplectic cohomology, log Calabi--Yau surface, affine manifold, wrapped Floer cohomology}
\thanks{The author was partially supported by NSF grant DMS-0636557.}
\address{Department of Mathematics, University of Texas at Austin}
\email{jpascale@illinois.edu}
\curraddr{University of Illinois at Urbana-Champaign}

\begin{document}

\begin{abstract}
This article studies the symplectic cohomology of affine algebraic surfaces that admit a compactification by a normal crossings anticanonical divisor. Using a toroidal structure near the compactification divisor, we describe the complex computing symplectic cohomology, and compute enough differentials to identify a basis for the degree--zero part of the symplectic cohomology. This basis is indexed by integral points in a certain integral affine manifold, providing a relationship to the theta functions of Gross--Hacking--Keel. Included is a discussion of wrapped Floer cohomology of Lagrangian submanifolds and a description of the product structure in a special case. We also show that, after enhancing the coefficient ring, the degree--zero symplectic cohomology defines a family degenerating to a singular surface obtained by gluing together several affine planes.
\end{abstract}

\maketitle
\tableofcontents

\section{Introduction}
\label{sec:intro}

Log Calabi-Yau manifolds (see Definition \ref{def:intro-log-cy} below) have a rich symplectic geometry. They are the
subject of a mirror symmetry conjecture: if $U$ is a log Calabi-Yau manifold, then in favorable situations,
there is a mirror variety $U^\vee$ such that the Floer theory of $U$ is reflected in the algebraic geometry of $U^\vee$. For
instance, as conjectured by Gross--Hacking--Keel \cite{ghk}, one
expects to find the ring of regular functions on $U^\vee$ sitting
inside the symplectic cohomology of $U$. Furthermore, the symplectic
cohomology should come with a natural basis corresponding to a
collection of ``theta functions'' on $U^\vee$
\cite{ghk,gs-theta}. This article contains results towards this
conjecture in complex dimension two.

Symplectic cohomology is a version of Hamiltonian Floer homology that applies to open symplectic manifolds. In particular, given a smooth complex affine variety $U$, we can equip $U$ with an exact symplectic structure coming from a projective compactification $Y$ by an ample divisor $D$, which exists by a result of Hironaka. The symplectic cohomology does not depend on the choices involved \cite[\S 4b]{seidel-biased}, so we may speak of the symplectic cohomology $SH^*(U)$ of the affine variety $U$.

Let us briefly describe the geometry that is involved in the definition of $SH^*(U)$. It is the the cohomology of a cochain complex $SC^*(U)$, which for an appropriate choice of Hamiltonian function, contains both the ordinary cohomology of $U$ as well as generators corresponding to periodic Reeb orbits in a contact hypersurface at infinity. Both the cochain complex $SC^*(U)$ and its cohomology $SH^*(U)$ are in general infinite dimensional, even in a single degree. The differential on the complex counts maps of cylinders into $U$ satisfying an inhomogeneous pseudo-holomorphic map equation.

Symplectic cohomology is interesting from several points of view in symplectic topology. 
One application is the Weinstein conjecture \cite{weinstein-conjecture}: if a certain canonical map $H^*(U) \to SH^*(U)$ is not an isomorphism, then the contact hypersurface at infinity must contain a periodic Reeb orbit. However, the motivating interest in this paper comes from homological mirror symmetry. Symplectic cohomology is the closed--string sector of a two--dimensional field theory that forms the A--side of one version of HMS for an open symplectic manifold $U$, whose open--string sector is the wrapped Fukaya category of Abouzaid--Seidel \cite{as-open-string}; Ganatra's thesis \cite{ganatra-thesis} is a deep study of this relationship between open and closed string sectors. The B--side is an appropriate category of sheaves $\cC$ on a dual space $U^\vee$, for which the closed--string sector is the Hochschild cohomology $HH^*(\cC)$.

This line of thinking leads to interesting predictions about the symplectic cohomology of $U$ in terms of the algebraic geometry of $U^\vee$. We beg the reader to bear with this rather conjectural paragraph. Let us adopt the ansatz that the mirror $U^\vee$ is a smooth affine algebraic variety over a base field $\K$. HMS suggests that the derived wrapped Fukaya category $D^b\cW(U)$ is equivalent to the derived category of coherent sheaves $D^b\Coh U^\vee$, and hence that their Hochschild cohomologies $SH^*(U)$ and $HH^*(U^\vee)$ are isomorphic. But now
\begin{equation}
  HH^p(U^\vee) \cong \bigoplus_{i=0}^pH^i(U^\vee,\Lambda^{p-i}T_{U^\vee}) \cong H^0(U^\vee,\Lambda^pT_{U^\vee}),
\end{equation}
where the first isomorphism is a Hochschild--Kostant--Rosenberg-type isomorphism (\cite[p.~131]{hms} citing \cite{gerstenhaber-schack}), and the second holds because of the ansatz that $U^\vee$ is affine (Cartan--Serre Theorem B, \cite[Theorem III.3.7]{hartshorne}). Setting $p = 0$, 
\begin{equation}
  \label{eq:sh-hms}
  SH^0(U) \cong HH^0(U^\vee) \cong H^0(U^\vee, \cO_{U^\vee}).
\end{equation}
Since $U^\vee$ is affine, $U^\vee = \Spec H^0(U^\vee, \cO_{U^\vee})$. Thus we find
\begin{equation}
  U^\vee \cong \Spec SH^0(U),
\end{equation}
giving us a way to reconstruct the mirror $U^\vee$ from the symplectic cohomology of $U$.

This argument raises some basic questions. First of all, it requires that $SH^*(U)$ be a ring. This ring structure is well-known in Hamiltonian Floer homology as the pair-of-pants product. More surprisingly, it implies that $SH^*(U)$ is $\Z$--graded. The $\Z$--grading may not seem like a central issue from a symplectic topology point of view, but it is closely related to the Calabi--Yau nature of mirror symmetry. The symplectic geometry definition of $SH^*(U)$ yields only a $\Z/2\Z$--grading, and in general this can be lifted to a $\Z$--grading when $c_1(U) = 0$. Even then, the actual $\Z$--grading depends on a trivialization of the canonical bundle of $U$. Therefore, in order to make the isomorphism \eqref{eq:sh-hms} work, the class of symplectic manifolds $U$ under consideration must be restricted and extra data may need to be included on the A--side.

The relevant definition here is that of a log Calabi--Yau manifold. In this paper we consider the case of surfaces only. 
\begin{definition}
\label{def:intro-log-cy}
  Let $U$ be a smooth complex quasi-projective surface. $U$ is \emph{log Calabi--Yau} if there is a smooth projective surface $Y$ containing an at worst nodal anticanonical divisor $D$ such that $U \cong Y \setminus D$. If $D$ is actually nodal, $(Y,D)$ is known as a \emph{Looijenga pair}, or is said to have \emph{maximal boundary}.
\end{definition}
In this definition, by ``at worst nodal divisor'' we include the condition that $D$ is effective and reduced (each component has multiplicity equal to one); see Section \ref{sec:log-cy-surfaces} for a fuller explanation of these conditions.
It turns out that the divisor $D$ in this situation must either be a smooth genus one curve, a nodal genus one curve, or a cycle of rational curves meeting at nodes (Lemma \ref{lem:divisor-topology}).

Since $D$ is anticanonical there is an isomorphism $\omega_Y(D) \cong \cO_Y$, where $\omega_Y$ is the sheaf of holomorphic $2$-forms. Thus there is a unique-up-to-scalar section $\Omega \in H^0(Y,\omega_Y(D))$, which is a meromorphic $2$-form whose divisor of poles and zeros is exactly $-D$. Since $D$ is effective and reduced, $\Omega$ is a meromorphic volume form on $Y$ with simple poles along $D$ (and no other zeros or poles). Thus the restriction $\Omega|_U$ is a nowhere-vanishing complex volume form on $U$ that gives a specific trivialization of the canonical bundle, showing $c_1(U) = 0$. It turns out that the homotopy class of this trivialization does not depend on the choice of compactification $(Y,D)$ (Lemma \ref{lem:volume-form}).

To illustrate the sort of restriction this places on $U$, consider log Calabi--Yau manifolds of complex dimension one. Besides elliptic curves, which satisfy the definition with $U = Y$ and $D = \emptyset$, the condition that $D$ must be effective and anticanonical implies $Y \cong \PP^1$, and that the support of $D$ is at most two points. Thus $U$ is either $\C$ or $\C^\times$. Both of these satisfy $c_1(U) = 0$, as does any punctured curve, but only $U = \C^\times$ (with $D = 0 + \infty$) is log Calabi--Yau in our definition, since the divisor $D = 2\infty$ is not reduced. In terms of symplectic cohomology, the contrast is stark, as $SH^*(\C) = 0$ (which is the cohomology of the empty set), while $SH^*(\C^\times) \cong \K[x,x^{-1},\theta]$, where $x$ is in degree zero, $\theta$ is in degree one, and $\theta^{2}=0$. Observe that $SH^0(\C^\times)$ is isomorphic to $\K[x,x^{-1}]$, the coordinate ring of the expected mirror $\GG_m$.

In this paper, we make two other assumptions, namely that $U$ is affine, and so is an exact symplectic manifold with a well-defined symplectic cohomology, and also that the complex dimension is two, which is the first interesting low--dimensional case. This case was also studied extensively in an algebro-geometric context by Gross--Hacking--Keel \cite{ghk}, who proposed that the degree zero symplectic cohomology should be the coordinate ring of the mirror, in favorable cases. They approach the construction of the mirror from another direction, and actually define the coordinate ring of the mirror using tropical geometry (an essentially combinatorial theory). Our goal is to verify some of their predictions in terms of symplectic geometry and Floer theory, so we actually work with maps of Riemann surfaces into the symplectic manifold. 

The first step towards verifying the Gross--Hacking--Keel prediction is to compute the symplectic cohomology additively. In their work \cite{ghk} they find that $\Gamma(U^\vee,\cO_{U^\vee})$ is additively generated by a basis $\{\theta_p \mid p \in U^\trop(\Z)\}$, where the elements are indexed by integral points of a certain integral affine manifold $U^\trop$ associated to $U$ (we also describe a construction of this manifold). These elements $\theta_p$ are the so-called theta functions on the mirror $U^\vee$ \cite{ghk,gs-theta}.

By actually looking at the periodic Hamiltonian orbits used to define symplectic cohomology, we also find an additive basis for the degree zero part of symplectic cohomology whose indexing set is in correspondence with $U^\trop(\Z)$, via a geometrically natural bijection.

\begin{theorem}
\label{thm:main}
Let $U$ be an affine log Calabi--Yau surface with maximal boundary. We
construct a set of cochains $\{\theta_p \mid p \in
U^\trop(\Z)\}$ having degree zero in the $\Z$--grading on
symplectic cohomology induced by a log Calabi--Yau compactification
$(Y,D)$. They are closed and form a basis of the degree zero cohomology:
  \begin{equation}
    SH^0(U) = \Span \{\theta_p \mid p \in U^\trop(\Z)\}
  \end{equation}
\end{theorem}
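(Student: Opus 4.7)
The plan is to construct the $\theta_p$ as explicit cochains coming from orbits of an admissible Hamiltonian adapted to the toroidal structure near $D$, and then to verify the degree, spanning, closedness, and linear independence claims in turn, with the final step being the main obstacle.

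First, I would fix an admissible Hamiltonian $H$ built from the toroidal structure near $D$. Near the smooth locus of a boundary component $D_i$ the Hamiltonian flow exhibits Morse--Bott circles of periodic orbits indexed by winding numbers $k\in\Z_{>0}$, while near a node $D_i\cap D_{i+1}$ one gets Morse--Bott tori indexed by lattice vectors in $\Z_{\geq 0}^2\setminus\{0\}$ in the corresponding quadrant. These indexing sets glue naturally to $U^\trop(\Z)\setminus\{0\}$, and a constant orbit at an interior critical point of $H$ accounts for the origin $0\in U^\trop(\Z)$. The cochain $\theta_p$ is then taken to be the minimum of an auxiliary Morse function on the Morse--Bott family labelled by $p$, after the standard cascade-style perturbation.

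Second, the degree assertion is a Conley--Zehnder calculation performed relative to the trivialization of $K_U$ provided by $\Omega$ (Lemma \ref{lem:volume-form}). The rotational Maslov contribution from the orbit and the Morse--Bott correction from choosing the minimum of the auxiliary Morse function are designed to cancel, giving degree zero exactly for $\theta_p$. The other critical points on each Morse--Bott family contribute strictly positive degree, so the $\theta_p$ exhaust and span $SC^0$.

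The main obstacle is the last step: controlling the Floer differential well enough to establish closedness and linear independence in cohomology. Here I would exploit intersection theory with $D$. A Floer cylinder with asymptotes near $D$ can be completed, after compactifying each cylindrical end, to a map into $Y$ whose intersection number with each irreducible component $D_i$ is determined by the winding numbers of its asymptotic orbits. With respect to an almost complex structure that is integrable in a neighborhood of $D$, positivity of intersection forces each such number to be non-negative; combined with the fact that $\theta_p$ has matching winding on both ends of any cylinder flowing into or out of it, this pins down the topological type of the cylinder and, via an action estimate, reduces the relevant moduli space to the trivial (constant) cylinders. This simultaneously yields $d\theta_p=0$ and rules out any $\eta\in SC^{-1}$ with $d\eta$ nontrivially in $\Span\{\theta_p\}$, hence linear independence in cohomology. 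The bookkeeping of intersection multiplicities for each class $p \in U^\trop(\Z)$, together with transversality within the chosen Morse--Bott perturbation scheme, is where the technical weight of the argument sits.
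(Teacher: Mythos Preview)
Your construction of the $\theta_p$ from Morse--Bott tori of periodic orbits, and the Conley--Zehnder computation placing them in degree zero, are essentially as in the paper. Note also that since the cochain complex is concentrated in non-negative degrees (the tori contribute indices $0,1,2$ and the interior contributes $H^*(U)$), there is nothing in $SC^{-1}$, so linear independence in cohomology is immediate once closedness is known; your separate argument for it is unnecessary.

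The genuine gap is in your closedness argument. You propose to compactify a Floer cylinder to a sphere in $Y$ and invoke positivity of intersection with the $D_i$. But the Floer cylinder lives entirely in $U = Y \setminus D$, and the capping disks you must add are only topological, so the compactified map is not pseudo-holomorphic near $D$ and positivity of intersection does not apply. Nor does a homology-class-of-the-loop argument suffice on its own: distinct $p, q \in U^{\mathrm{trop}}(\Z)$ need not give non-homologous loops in $U$, so the differential is not \emph{a priori} block-diagonal over the tori. The paper's route is quite different. First the Liouville structure is deformed so that the Liouville class of the boundary torus fibers is ``locally convex''; an energy argument in the style of Bourgeois--Colin then shows that genus-zero holomorphic curves in the symplectization $\Sigma \times \R$ with one positive puncture are trivial. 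Neck-stretching \`a la Bourgeois--Oancea transfers this nonexistence to Floer cylinders between orbits on distinct boundary tori. This still leaves possible differentials from $\theta_p$ into \emph{interior} generators; to kill those the paper uses the BV operator: for primitive $p$ one exhibits $\eta$ with $\Delta(\eta) = \theta_p + (\text{interior})$, and the chain-map identity $\Delta d + d\Delta = 0$ together with $\Delta|_{H^*(U)} = 0$ forces $d\theta_p = 0$. Non-primitive generators are then handled by writing them as pair-of-pants products of primitives and using that $d$ is a derivation.
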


  The element $\theta_p$ is defined with reference to a particular Liouville structure on $U$, which is deformation equivalent to the Stein structure, and a particular Hamiltonian $H: U \to \R$. With this in mind, $\theta_p$ a degree zero element arising from perturbation of a periodic torus of the Hamiltonian flow. Such tori are in a geometrically natural bijection with $U^\trop(\Z)$.

Let us outline briefly how this computation goes. In section \ref{sec:construction}, we construct a Liouville structure on $U$, which admits a Lagrangian torus fibration over the cylindrical end of $U$. This construction is based largely on \cite[\S 4]{seidel-biased}, with some tweaks using the extra symmetry of our situation. This structure has another convenient property, namely the contact manifold on which the cylindrical end is modeled satisfies a certain ``convexity'' condition.\footnote{This condition is independent of the contact condition, which may also be regarded as a convexity condition.} With an appropriate Hamiltonian, the periodic orbits can be explicitly described. In fact, there are entire tori that are periodic for the Hamiltonian flow, so this is a Morse--Bott situation. After small time-dependent perturbation of the Hamiltonian, these tori break up into several non-degenerate orbits of various degrees. There is a Morse-Bott spectral sequence converging to the symplectic cohomology whose $E_1$ page reproduces the cohomologies of the periodic tori. We show that the degree zero element for each torus is closed for all higher differentials in the spectral sequence, and is never exact. The corresponding cochain is what we call $\theta_p$.

The crucial point is to understand the differential, and in particular to show that $\theta_p$ is closed, so that it is actually a cocycle rather than just a cochain element. For the definitions see section \ref{sec:sh-hol-curves}. The convexity condition is important for understanding the holomorphic curves that contribute to the differential, as it actually allows us to show that certain moduli spaces are empty for energy reasons. At this point in the argument we use some ideas coming from symplectic field theory, adapted to the framework of Hamiltonian Floer homology, namely a neck-stretching argument due to Bourgeois and Oancea \cite{bo-exact-sequence}, and an adaptation of a technique developed by Bourgeois and Colin \cite{bourgeois-colin} to compute contact homology of toroidal manifolds. To get from these techniques to the algebraic fact that $\theta_p$ is closed, we use the way the differential interacts with the Batalin-Vilkovisky operator and the pair-of-pants product, so we also obtain some limited information about these operations as well. This is done in section \ref{sec:differential}.

Above we described how HMS applied to Hochschild cohomology leads to the expectation that $SH^0(U)$ is isomorphic the ring of global functions on the mirror $U^\vee$. However, there is another, more obvious way to extract this ring from the category of coherent sheaves on $U^\vee$, namely as $\Hom(\cO_{U^\vee},\cO_{U^\vee})$. Under the ansatz that $U^\vee$ is affine, there are no higher $\Ext$--groups, so we should expect to be able to find a Lagrangian submanifold $L$ in $U$ corresponding to $\cO_{U^\vee}$, having wrapped Floer cohomology $HW^*(L,L)$ concentrated in degree zero, and such that $SH^0(U)$ is isomorphic to $HW^0(L,L)$. In section \ref{sec:wrapped} we consider Lagrangian submanifolds with these properties, which are candidates for the mirror to $\cO_{U^\vee}$. In the case where $U$ is the complement of a smooth conic in $\C^2$, we can combine the results of this paper with those of \cite{binodal} to understand the ring structure and show that
\begin{equation}
  SH^0(U) \cong \K[x,y][(xy-1)^{-1}]
\end{equation}

Section \ref{sec:log-cy-surfaces} contains some basic results on log Calabi-Yau surfaces. Section \ref{sec:affine-man} describes the affine manifold $U^\trop$ from a topological viewpoint. 

The paper is organized so that sections \ref{sec:sh-hol-curves}, \ref{sec:construction}, and \ref{sec:differential} are a continuous thread of argument. Sections \ref{sec:log-cy-surfaces} and \ref{sec:affine-man} can be read as interludes describing the algebro-geometric and piecewise-linear context of our study.

We conclude the paper with section \ref{sec:coeff-degeneration} containing some results on the product structure that provide a closer connection to \cite{ghk}. Here, for a chosen compactification $Y$, we define a certain strictly convex cone $P \subset H_2(Y;\Z)$ containing the cone of effective curves in $Y$, and we show how to enhance the coefficient ring of $SH^0(U)$ to the monoid ring $\K[P]$. This makes $\Spec SH^0(U)$ into a family over $\Spec \K[P]$. We show that the central fiber of this family is isomorphic to a singular surface $\mathbb{V}_n$ (called \emph{the vertex}) consisting of $n$ copies of $\Af^2$ glued together along coordinate axes in a cycle. Thus the symplectic cohomology of $U$ provides a deformation of $\mathbb{V}_n$. An analogous result was obtained in \cite{ghk} using theta functions. In fact, along the way to proving this result, we use a symplectic-topological imitation of the \emph{broken lines} of \cite{ghk}, which may be of independent interest. This is done in section \ref{sec:drawing}, where we associate to a holomorphic curve in the cylindrical end of $U$ a graph (or tropical curve) in the affine manifold $U^\trop$.

\subsection{Acknowledgments}
\label{sec:acknowledgments}

The author wishes to thank Sean Keel for many helpful discussions of his joint work with Mark Gross and Paul Hacking, which formed the background of this work. He also thanks Mohammed Abouzaid, Matthew Strom Borman, Vincent Colin, Luis Diogo, Sheel Ganatra, Ailsa Keating, Mark McLean, Tim Perutz, and Paul Seidel for many helpful discussions and suggestions on techniques to compute symplectic cohomology. In particular, the use of the Batalin--Vilkovisky operator that appears in Proposition \ref{prop:prim-closed} was suggested by Paul Seidel. He also thanks the referee for a very careful reading of the manuscript that led to improvements in many places. This work was done while the author held an RTG postdoctoral fellowship (NSF grant DMS-0636557).

\section{Symplectic cohomology and holomorphic curves}
\label{sec:sh-hol-curves}

This section reviews material about pseudo-holomorphic curves in the case of manifolds with contact-type boundary and cylindrical ends, and a description of the versions of Floer cohomology that we use. We hope that this section will make the paper more self-contained for algebro-geometrically minded readers.

\subsection{Conventions}
\label{sec:conventions}


In this section, we set out the conventions for symplectic manifolds and symplectic cohomology. 
In large part our conventions follow \cite{seidel-biased,ritter-tqft}.

\begin{definition}
  An \emph{exact symplectic form} on a manifold $M$ is an exact non-degenerate two-form $\omega$. Thus 
\begin{equation}
  \omega = d\lambda
\end{equation}
for some one-form $\lambda$, which is called a \emph{Liouville one-form}. The corresponding \emph{Liouville vector field} $Z$ is defined by duality with respect to $\omega$:
\begin{equation}
  \iota_Z \omega = \omega(Z,\cdot) = \lambda
\end{equation}
Thus the one-form $\lambda$ determines the two-form $\omega$ and the vector field $Z$. 

A \emph{Liouville domain} \cite{seidel-biased} is a compact manifold with boundary, equipped with a one-form $\lambda$, such that the two-form $\omega = d\lambda$ is symplectic, and the Liouville vector field $Z$ points strictly outward along the boundary.
\end{definition}

\begin{definition}
  Let $L \subset M$ be a Lagrangian submanifold. The Liouville one-form $\lambda$ defines a class $[\lambda|_L] \in H^1(L,\R)$, called the \emph{Liouville class}.
\end{definition}

\begin{definition}
  Let $(M,\omega)$ be a symplectic manifold. An almost complex structure $J$ on $M$ is \emph{compatible} with $\omega$ if the bilinear form $g$ defined by
  \begin{equation}
    g(X,Y) = \omega(X,JY)
  \end{equation}
  is symmetric and positive definite at every point of $M$. Thus $g$ is a Riemannian metric associated to the choice of $J$.
\end{definition}

\begin{definition}
  \label{def:hamiltonian}
  If $H: M \to \R$ is a differentiable function, the \emph{Hamiltonian vector field} $X_H$ associated to $H$ is defined by the relation
  \begin{equation}
    -\iota_{X_H}\omega = \omega(\cdot,X_H) = dH
  \end{equation}
In the presence of a compatible almost complex structure $J$ and associated metric $g$, we may take the gradient $\nabla H$ with respect to $g$. As a consequence of our conventions, this is connected to $X_H$ by
\begin{equation}
  X_H = J\nabla H
\end{equation}
\end{definition}

\begin{example}
  Let $M = \C$ be the complex affine line, coordinatized by $z = x+iy$. We take the Euclidean structures $\omega = \frac{i}{2}dz\wedge d\bar{z} = dx\wedge dy$, $J =$ multiplication by $i$, $g = dx^2 + dy^2$. For $\lambda$ we choose
  \begin{equation}
    \lambda = d^c\left(\frac{1}{4}|z|^2\right) = \frac{i}{4}(z\,d\bar{z} - \bar{z}\,dz) = \frac{1}{2}(x\,dy-y\,dx)
  \end{equation}
  where we have taken advantage of the complex analytic structure of $M$ to write $\lambda$ in terms of a K\"{a}hler potential. Here $d^c = -i(\partial - \bar{\partial})$ as in, for example, \cite{wells}.

The corresponding Liouville vector field is 
\begin{equation}
  Z = \frac{1}{2}\left(z\frac{\partial}{\partial z}+\bar{z}\frac{\partial}{\partial\bar{z}}\right) = \frac{1}{2}\left(x\frac{\partial}{\partial x}+y\frac{\partial}{\partial y}\right)
\end{equation}
Since the vector field $Z$ points radially outward, we find that any disk $\{|z| \leq R\}$ becomes a Liouville domain when equipped with the restrictions of these structures.

Now consider the function $H = \frac{1}{2}(x^2+y^2)$. We have
\begin{align}
\nabla H &= x\frac{\partial}{\partial x} + y\frac{\partial}{\partial y}\\
X_H &= J\nabla H  = -y\frac{\partial}{\partial x} + x\frac{\partial}{\partial y}
\end{align}
Thus the flow of $X_H$ rotates the plane about the origin in the counterclockwise direction with period $2\pi$.
\end{example}

\begin{lemma}
  If $(M, \lambda)$ is a Liouville domain of real dimension $2n$, then $\alpha \in \Omega^1(\partial M)$ defined by
  \begin{equation}
    \alpha = \lambda |_{\partial M}
  \end{equation}
is a contact one-form on the boundary $\partial M$, which is to say $\alpha\wedge(d\alpha)^{2n-2}$ is a volume form on $\partial M$.
\end{lemma}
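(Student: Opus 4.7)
The plan is to reduce the contact condition to a standard computation with the Liouville vector field, using only the defining relations $\omega = d\lambda$, $\iota_Z\omega = \lambda$, and the outward-transversality of $Z$ along $\partial M$. First, I would observe that since pullback commutes with the exterior derivative, $d\alpha = d(\lambda|_{\partial M}) = (d\lambda)|_{\partial M} = \omega|_{\partial M}$. Consequently, the top-degree form on $\partial M$ whose nonvanishing is to be checked can be written as $(\lambda \wedge \omega^{n-1})|_{\partial M}$, so the whole problem becomes: show that $\lambda \wedge \omega^{n-1}$ restricts to a nowhere-vanishing top-form on the boundary.

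The second step is the classical Cartan-type identity $\iota_Z(\omega^n) = n\,\lambda\wedge\omega^{n-1}$. Since $\omega$ has even degree, the antiderivation property of $\iota_Z$ specializes to the ordinary Leibniz rule $\iota_Z(\omega\wedge\omega^{n-1}) = (\iota_Z\omega)\wedge\omega^{n-1} + \omega\wedge\iota_Z(\omega^{n-1})$, and an immediate induction on $n$, starting from $\iota_Z\omega = \lambda$, produces the identity. This expresses the form we care about in terms of the contraction of the symplectic volume form by the Liouville field.

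For the final step I would invoke outward-transversality: by hypothesis, $Z$ is transverse to $\partial M$ and points out of $M$. For any volume form $\mu$ on an oriented manifold and any vector field $Z$ transverse to a hypersurface $\Sigma$, the form $(\iota_Z\mu)|_\Sigma$ is a volume form on $\Sigma$, because evaluating on any basis $e_1,\dots,e_{2n-2}$ of $T_p\Sigma$ gives $\mu(Z,e_1,\dots,e_{2n-2}) \neq 0$, as $Z,e_1,\dots,e_{2n-2}$ is a basis of $T_pM$. Applying this to $\mu = \omega^n$ and the identity from the previous step, one concludes that $n\,\alpha\wedge(d\alpha)^{n-1} = (\iota_Z\omega^n)|_{\partial M}$ is nowhere zero on $\partial M$, which is the contact condition.

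There is no serious obstacle here: the argument is essentially purely algebraic once the three structural inputs (exactness, the defining equation for $Z$, and outward transversality) are in hand. The only point requiring a bit of care is the bookkeeping in the Cartan identity and matching powers so as to obtain a top-degree form of the correct degree $2n-1$ on the $(2n-1)$-dimensional boundary.
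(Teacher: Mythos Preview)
Your argument is correct and is the standard one. The paper itself states this lemma without proof, treating it as a well-known fact; your three-step reduction via the identity $\iota_Z(\omega^n) = n\,\lambda\wedge\omega^{n-1}$ and transversality of $Z$ is exactly the expected proof.

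One small remark: the exponent in the paper's statement appears to be a typo. For $M$ of real dimension $2n$, the boundary has dimension $2n-1$, so the contact condition is that $\alpha\wedge(d\alpha)^{n-1}$ is a volume form, not $\alpha\wedge(d\alpha)^{2n-2}$. You have silently corrected this in your write-up, which is the right call, but it is worth flagging explicitly.
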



\begin{definition}
  Let $(N,\alpha)$ be a contact manifold with contact one-form $\alpha$. We have the contact distribution $\xi = \ker \alpha$. The \emph{Reeb vector field} is defined by the conditions $\iota_Rd\alpha = 0$, $\alpha(R)= 1$. The \emph{symplectization} of $(N,\alpha)$ is an exact symplectic structure on $N \times \R$. Letting $\rho$ denote the coordinate on the $\R$ factor, the Liouville one-form, symplectic form, and Liouville vector field are
  \begin{align}
    \lambda &= e^\rho\alpha\\
    \omega &= e^\rho(d\rho\wedge \alpha + d\alpha)\\
    Z &= \frac{\partial}{\partial \rho}
  \end{align}
Symplectic manifolds of the form $N\times \R$ are also called \emph{cylindrical symplectic manifolds}.

An almost complex structure $J$ on $N\times \R$ is \emph{cylindrical} if it is invariant under translation in the $\rho$-direction and it respects the product structure in the following way: with respect to the decomposition
\begin{equation}
\label{eq:tangent-splitting}
  T(N\times \R) = TN \oplus \langle Z\rangle = \xi \oplus \langle R \rangle \oplus \langle Z \rangle
\end{equation}
we require that $J$ preserves $\xi$ and $J(Z) = R$. Thus $J|_\xi$ is an almost complex structure on $\xi$. The full structure $J$ is compatible with $\omega$ if and only if $J|_\xi$ is compatible with $d\alpha|_\xi$, which is a symplectic form on $\xi$.

 A non-compact exact symplectic manifold $M$ is said to have a \emph{cylindrical end} if there is a compact set $K$ such that $M \setminus K$ is isomorphic to the positive part $N \times \R_+$ of a cylindrical symplectic manifold. An almost complex structure on such an $M$ is also called \emph{cylindrical} if it satisfies the above conditions on the end only.
\end{definition}

A basic fact is that any Liouville domain $M$ may be converted into a manifold with a cylindrical end by attaching a copy of the positive part of the symplectization of the contact boundary $\partial M \times \R_+$ \cite[(2a)]{seidel-biased}.

\begin{definition}
  A $(n-1)$--dimensional submanifold $\Lambda \subset N$ is called
  \emph{Legendrian} if $T\Lambda \subset \xi = \ker \alpha$. A
  $n$--dimensional submanifold $L \subset N$ is called
  \emph{pre-Lagrangian} if $d\alpha|_L = 0$. The one-form $\alpha$
  then defines a class $[\alpha|_L] \in H^1(L,\R)$ also called the
  \emph{Liouville class}. Observe that in the symplectization $L$
  lifts to $L \times \{0\} \subset N \times \R$ which is a Lagrangian
  submanifold with the same Liouville class.
\end{definition}

\begin{lemma}
  Let $N \times \R$ be a cylindrical symplectic manifold, and let $H : N \times \R \to \R$ be a function which depends on $\rho$ only. Thus $H(x,\rho) = h(e^\rho)$ for some function $h : (0,\infty) \to \R$. Then the Hamiltonian vector field $X_H$ is tangent to each slice $N\times \{\rho\}$, and is proportional to the Reeb vector field $R$:
  \begin{equation}
    X_H = h'(e^\rho)R
  \end{equation}
If $J$ is a compatible cylindrical almost complex structure, with corresponding metric $g$, then $X_H = J\nabla H$, where 
\begin{equation}
  \nabla H = h'(e^\rho)\frac{\partial}{\partial \rho}
\end{equation}
\end{lemma}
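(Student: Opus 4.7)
The plan is to unwind both formulas directly from the definitions $-\iota_{X_H}\omega = dH$ and $\nabla H = J^{-1}X_H$, exploiting the explicit form $\omega = e^\rho(d\rho\wedge\alpha + d\alpha)$ of the symplectization. First I would compute $dH$ by the chain rule: since $H(x,\rho) = h(e^\rho)$, one obtains
\begin{equation}
dH = h'(e^\rho)\,e^\rho\,d\rho.
\end{equation}
In particular $dH$ annihilates the contact distribution $\xi$ and the Reeb direction, which already forces $X_H$ to live in the symplectic complement of $\xi\oplus\langle R\rangle$ inside the $\omega$-pairing, suggesting the ansatz $X_H = f(\rho) R$ for some function $f$.

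Next I would verify the ansatz by a one-line calculation with $\omega$. Since $R$ is tangent to the slices $N\times\{\rho\}$, we have $\iota_R d\rho = 0$; combined with the Reeb identities $\iota_R\alpha = 1$ and $\iota_R d\alpha = 0$, contracting $\omega$ gives
\begin{equation}
\iota_R\,\omega = e^\rho\bigl(\iota_R(d\rho\wedge\alpha) + \iota_R d\alpha\bigr) = -e^\rho\,d\rho.
\end{equation}
Thus choosing $f(\rho) = h'(e^\rho)$ makes $-\iota_{X_H}\omega = h'(e^\rho)e^\rho d\rho = dH$, proving $X_H = h'(e^\rho) R$ as claimed.

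For the gradient formula, the cleanest route is to use the relation $X_H = J\nabla H$ from Definition \ref{def:hamiltonian}. The cylindrical compatibility requires $J(Z) = R$, hence $J(\partial_\rho) = R$, and the candidate $\nabla H = h'(e^\rho)\partial_\rho$ satisfies $J\nabla H = h'(e^\rho) R = X_H$, as desired. As a sanity check one can verify $g(\nabla H, Y) = dH(Y)$ for arbitrary $Y$ by decomposing $Y = Y_\xi + aR + bZ$ along the splitting \eqref{eq:tangent-splitting}; using $JR = -Z$, $JZ = R$, and $J\xi = \xi$, one finds $\alpha(JY) = b = d\rho(Y)$, which reduces the identity to $g(h'(e^\rho)\partial_\rho, Y) = h'(e^\rho)e^\rho\alpha(JY) = dH(Y)$.

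Essentially no obstacle is expected: the entire argument is a bookkeeping exercise with the symplectization data. The only point that demands a moment of attention is the behavior of $J$ on the Reeb/Liouville plane $\langle R, Z\rangle$, i.e.\ that $J$ swaps $R$ and $-Z$, but this is forced by $J^2 = -\mathrm{id}$ together with the defining condition $JZ = R$.
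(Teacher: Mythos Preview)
Your proof is correct. The paper states this lemma without proof, treating it as an elementary consequence of the definitions; your direct verification---computing $\iota_R\omega = -e^\rho\,d\rho$ from the Reeb identities and then reading off the coefficient, followed by the observation $J(\partial_\rho) = R$---is exactly the computation the paper leaves to the reader. One minor remark: your heuristic that $dH$ annihilating $\xi\oplus\langle R\rangle$ ``suggests'' the ansatz $X_H = f(\rho)R$ is in fact rigorous, since the $\omega$-orthogonal complement of $\xi\oplus\langle R\rangle$ is precisely $\langle R\rangle$; but as you note, simply exhibiting a solution suffices by nondegeneracy of $\omega$.
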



\begin{definition}
  Let $(M,\lambda)$ be an exact symplectic manifold, and $H: M \to \R$ a Hamiltonian function. The symplectic action of a loop $\gamma: \R/\Z \to M$, whose domain is parametrized by $t \in [0,1)$, is given by
  \begin{equation}
    \mathcal{A}(\gamma) = -\int_{\R/\Z}\gamma^*\lambda + \int_0^1 H(\gamma(t))\,dt
  \end{equation}
  The critical points of this action functional are those loops $\gamma$ such that $-\iota_{\dot{\gamma}}d\lambda = dH$, which in light of definition \ref{def:hamiltonian} means $\dot{\gamma} = X_H$. In other words, the critical points are 1-periodic orbits of $X_H$.
\end{definition}
\begin{remark}
  More generally we may consider a time-dependent Hamiltonian function $H : \R/\Z \times M \to \R$, which can be thought of as a family of Hamiltonians $H_{t} : M \to \R$ depending on $t \in \R/\Z$. Then the Hamiltonian vector field $X_{H}$ is a time-dependent vector field whose value at time $t$ is $X_{H_{t}}$. There is also a version of the action functional $\mathcal{A}$ in this context, where the second term has an explicit $t$-dependence. The correspondence between critical points of $\mathcal{A}$ and time 1 periodic orbits of $X_{H}$ still holds.
\end{remark}

\subsection{Holomorphic curves}
\label{sec:hol-curves}

In this section we recall some elementary facts about inhomogeneous pseudo-holomorphic maps that will be used in the paper. Throughout, let $C$ be a Riemann surface with complex structure $j$. 
\begin{definition}
  Let $(M,J)$ be an almost complex manifold. A map $u: C \to M$ is \emph{pseudo-holomorphic} if $J \circ du = du \circ j$.
\end{definition}

\begin{definition}
  Let $(M,\omega,J)$ be a symplectic manifold with compatible almost complex structure. Let $H: M\to \R$ be a Hamiltonian function, with Hamiltonian vector field $X_H = J\nabla H$. Let $\beta \in \Omega^1(C)$ be a one-form. A map $u: C \to M$ is an \emph{inhomogeneous pseudo-holomorphic map} if 
  \begin{equation}
    \label{eq:floers-eqn}
    J \circ (du - X_H \otimes \beta) =  (du - X_H \otimes \beta) \circ j
  \end{equation}
\end{definition}


Let $M = N\times \R$ be a cylindrical symplectic manifold and $J$ a cylindrical almost complex structure. Then we may write any map $u : C \to N\times \R$ as $u = (f,a)$, where $f: C \to N$ and $a : C \to \R$. Let $\pi_\xi : TN \to \xi$ denote the projection whose kernel is the Reeb field $R$, and let $\pi_R : TN \to \langle R \rangle$ denote the complementary projection. Observe that $\pi_R(X) = \alpha(X)R$ Let $J_\xi$ denote the $\xi$ component of $J$. 

\begin{proposition}
\label{prop:hol-map-decomp}
  The map $u = (f,a): C \to N \times \R$ is pseudo-holomorphic iff 
  \begin{align}
    J_\xi \circ \pi_\xi \circ df &= \pi_\xi \circ df \circ j\\
    J \circ da &= \pi_R \circ df \circ j
    \end{align}
    If we identify $da$ with a one-form on $C$, the second equation may be expressed as
    \begin{equation}
      da = \alpha\circ df \circ j = (f^*\alpha)\circ j
    \end{equation}
    The system says that $\pi_\xi \circ df : TC \to \xi$ is complex linear, and the one-form $(f^*\alpha)\circ j$ is exact, with $a$ being an antiderivative.
  \end{proposition}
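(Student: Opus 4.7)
The plan is to decompose $du$ according to the three-fold tangent splitting \eqref{eq:tangent-splitting} and then compare the three components of the two sides of the pseudo-holomorphic equation $J \circ du = du \circ j$. Writing $u = (f,a)$ and using that $\pi_R \circ df = (\alpha\circ df)\otimes R$, while the $\rho$-direction component of $du$ is $da \otimes Z$, the decomposition reads
\[
du \;=\; \pi_\xi \circ df \;+\; (\alpha\circ df)\otimes R \;+\; da \otimes Z.
\]

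Applying $J$ componentwise, using $J|_\xi = J_\xi$, $J(Z) = R$, and consequently $J(R) = -Z$, gives
\[
J \circ du \;=\; J_\xi \circ \pi_\xi \circ df \;-\; (\alpha\circ df)\otimes Z \;+\; da \otimes R,
\]
while the same splitting applied to $du \circ j$ yields
\[
du \circ j \;=\; \pi_\xi \circ df \circ j \;+\; (\alpha\circ df \circ j)\otimes R \;+\; (da\circ j)\otimes Z.
\]
Matching the $\xi$-components produces the first displayed equation in the proposition. Matching the $R$-components produces $da = \alpha\circ df\circ j$, which is the second. The $Z$-components impose $-\alpha\circ df = da\circ j$; precomposing with $j$ and using $j^2 = -\mathrm{id}$ recovers the same relation, so no independent constraint arises. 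Hence the pseudo-holomorphic equation is equivalent to the stated pair.

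The closing reformulation is then immediate: the first equation says precisely that the bundle map $\pi_\xi \circ df : TC \to \xi$ intertwines $j$ and $J_\xi$, i.e., is complex linear, while the second, rewritten as $da = (f^*\alpha)\circ j$, exhibits $(f^*\alpha)\circ j$ as an exact one-form with antiderivative $a$. There is no substantial obstacle here beyond careful bookkeeping of the signs arising from $J(R) = -Z$; the one structural point worth highlighting is that $J$ swaps the $R$ and $Z$ directions, which is what collapses two a priori distinct component equations into a single one.
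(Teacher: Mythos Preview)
Your proof is correct and follows exactly the approach indicated in the paper, which simply says ``Clear by decomposing the tangent space to the target as in \eqref{eq:tangent-splitting}.'' You have merely written out the details of that decomposition and the component-matching, including the observation that the $R$- and $Z$-component equations are equivalent.
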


  \begin{proof}
    Clear by decomposing the tangent space to the target as in \eqref{eq:tangent-splitting}.
  \end{proof}

  \begin{proposition}
    \label{prop:floer-map-decomp}
    Let $H = h(e^\rho)$ be a Hamiltonian function on $N\times \R$ that depends only on the $\R$-coordinate $\rho$, and let $\beta \in \Omega^1(C)$. Then  $u = (f,a) : C \to N\times \R$ is a solution of \eqref{eq:floers-eqn} iff
    \begin{align}
      J_\xi \circ \pi_\xi \circ df &= \pi_\xi \circ df \circ j \\
      J\circ da &= (\pi_R \circ df - X_H\otimes \beta) \circ j
    \end{align}
    If we identify $da$ with a one-form on $C$, the second equation may be expressed as
    \begin{equation}
      da = (\alpha \circ df - \alpha(X_H)\beta)\circ j
    \end{equation}
    Note that the expression $\alpha(X_H) = h'(e^a)$ depends functionally on $a$ but not on $f$.
  \end{proposition}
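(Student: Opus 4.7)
The plan is to adapt the proof of Proposition \ref{prop:hol-map-decomp} verbatim, tracking the single new piece of data, namely the inhomogeneous term $X_H \otimes \beta$. First I would decompose the target tangent bundle using the splitting \eqref{eq:tangent-splitting} and write
\[
du = \pi_\xi\circ df \;+\; (\alpha\circ df)\,R \;+\; da\,Z,
\]
and then decompose $X_H\otimes\beta$ in the same way.

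The key observation is that, by the preceding lemma, $H = h(e^\rho)$ forces $X_H = h'(e^\rho)R$, so $X_H$ is purely in the Reeb direction. Consequently $X_H\otimes\beta$ has no $\xi$-component and no $Z$-component; its only contribution is to the $R$-component, where it subtracts $\alpha(X_H)\,\beta\,R = h'(e^a)\,\beta\,R$. Thus
\[
du - X_H\otimes\beta = \pi_\xi\circ df \;+\; \bigl(\alpha\circ df - \alpha(X_H)\,\beta\bigr)R \;+\; da\,Z.
\]

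Next I would apply $J$ using the cylindrical hypotheses: $J$ preserves $\xi$, $J(Z) = R$, and hence $J(R) = -Z$. This breaks Floer's equation \eqref{eq:floers-eqn} into one $\xi$-valued equation and a coupled pair of equations valued in $\langle R\rangle$ and $\langle Z\rangle$. The $\xi$-equation is untouched by the inhomogeneous term and is exactly $J_\xi \circ \pi_\xi\circ df = \pi_\xi\circ df \circ j$, as in the homogeneous case. The $R$- and $Z$-components become, respectively,
\begin{align*}
da &= \bigl(\alpha\circ df - \alpha(X_H)\,\beta\bigr)\circ j, \\
-\bigl(\alpha\circ df - \alpha(X_H)\,\beta\bigr) &= da\circ j,
\end{align*}
which are interchanged by composing with $j$ and using $j^2 = -\mathrm{id}_{TC}$; they therefore carry the same information, and together they package exactly into the claimed vector equation $J\circ da = (\pi_R\circ df - X_H\otimes\beta)\circ j$.

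No step here poses a genuine obstacle; the argument is linear algebra on each fiber of $T(N\times\R)$, combined with the cylindrical compatibility of $J$. The only subtlety worth flagging is the very last remark in the statement: $\alpha(X_H) = h'(e^a)$ depends on the $\R$-coordinate of $u$ but not on $f$, which falls out of $\alpha(R) = 1$ and the explicit form of $X_H$. Once this is noted, the proof reduces, as in Proposition \ref{prop:hol-map-decomp}, to ``clear by decomposing the tangent space to the target,'' with the trivial bookkeeping for the inhomogeneous term outlined above.
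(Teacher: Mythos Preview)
Your proposal is correct and follows exactly the paper's approach: observe that $X_H$ is proportional to $R$ (so $\pi_\xi(X_H) = 0$), then decompose $du - X_H\otimes\beta$ into its $\xi$, $R$, and $Z$ components as in \eqref{eq:tangent-splitting}. The paper's proof is a terse three-line version of what you wrote out in full.
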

  \begin{proof}
    First observe that the condition on $H$ implies that $X_H$ is proportional to $R$, thus $\pi_\xi(X_H) = 0$, and $\pi_R(X_H) = X_H = \alpha(X_H)R$. By considering the $\xi$-component, we obtain the first equation. The $R$ component of $du-X_H\otimes \beta$ is $\pi_R \circ df - X_H\otimes \beta$, while the $Z$ component is $da$.
  \end{proof}

By comparing these two propositions we see that, in both cases, for a map $u = (f,a)$ to solve the equation it is necessary that $\pi_\xi\circ df$ be complex linear, in which case $a$ can be more or less reconstructed from $f$ if it exists. This motivates the definition of a pseudo-holomorphic curve in a contact manifold.

\begin{definition}
  Let $(N,\alpha)$ be a contact manifold, and $J_\xi$ an almost complex structure on $\xi = \ker \alpha$ compatible with $d\alpha$. A map $f: C \to N$ is called \emph{pseudo-holomorphic} if 
  \begin{equation}
    J_\xi\circ \pi_\xi \circ df = \pi_\xi \circ df \circ j
  \end{equation}
\end{definition}

The next proposition expresses the familiar principle that ``holomorphic curves are symplectic.'' 
\begin{proposition}
\label{prop:positivity-contact-target}
  Let $f: C \to N$ be a pseudo-holomorphic map. Then $f^*d\alpha$ is a non-negative $2$-form (with respect to the complex orientation of $C$). Furthermore $f^*d\alpha$ can only vanish at a point where $\pi_\xi \circ df : TC \to \xi$ vanishes as a linear transformation, or equivalently $df$ maps $TC$ into the line spanned by $R$. Also, for any point $p$, $\pi_\xi \circ df_p$ has rank either zero or two.
\end{proposition}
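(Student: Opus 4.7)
The plan is to reduce $f^*d\alpha$ at each point to an expression of the form $d\alpha(v, J_\xi v)$ with $v \in \xi$, and then invoke the compatibility of $J_\xi$ with $d\alpha|_\xi$. The proposition is the contact-target analogue of the classical observation that ``holomorphic curves are symplectic''; the only new wrinkle is that $d\alpha$ is degenerate in the Reeb direction, so one must first cut away the Reeb component.

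First I would record the algebraic fact that $R$ spans the kernel of $d\alpha$, since $\iota_R d\alpha = 0$ by definition of the Reeb field. Decomposing any tangent vector as $v = \pi_\xi v + \alpha(v)R$, the $R$-components drop out of $d\alpha$, so
$$d\alpha(v,w) = d\alpha(\pi_\xi v, \pi_\xi w)$$
for all $v,w \in T_xN$. Next I would fix a point $z \in C$ and a non-zero $e \in T_zC$, so that $(e, je)$ is an oriented basis for the complex orientation. Then
$$(f^*d\alpha)(e, je) = d\alpha\bigl(df(e), df(je)\bigr) = d\alpha\bigl(\pi_\xi df(e),\ \pi_\xi df(je)\bigr).$$
The pseudo-holomorphic equation $\pi_\xi \circ df \circ j = J_\xi \circ \pi_\xi \circ df$ lets me replace $\pi_\xi df(je)$ by $J_\xi(\pi_\xi df(e))$, so the right-hand side becomes $d\alpha\bigl(\pi_\xi df(e),\ J_\xi \pi_\xi df(e)\bigr)$.

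Finally, the compatibility hypothesis says exactly that $d\alpha(v, J_\xi v) \geq 0$ for all $v \in \xi$, with equality iff $v = 0$. Taking $v = \pi_\xi df(e)$ yields non-negativity of $f^*d\alpha$ at $z$. For the vanishing statement, $(f^*d\alpha)_z = 0$ iff $\pi_\xi df(e) = 0$; but then $\pi_\xi df(je) = J_\xi(\pi_\xi df(e)) = 0$ as well, so $\pi_\xi \circ df$ vanishes on all of $T_zC$, which is exactly the statement that $df(T_zC) \subset \langle R\rangle$.

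I do not expect any real obstacle: the argument is a two-line bookkeeping computation once the splitting $TN = \xi \oplus \langle R\rangle$ and the defining properties of $R$, $J_\xi$, and $d\alpha|_\xi$ are in place. In fact the same reasoning will plausibly reappear when analyzing the map $u = (f,a)$ in the symplectization via Proposition \ref{prop:hol-map-decomp}, where one uses positivity of $f^*d\alpha$ together with the relation $da = (f^*\alpha)\circ j$ to control energies.
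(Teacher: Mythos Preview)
Your argument is correct and is essentially identical to the paper's own proof: both pick an oriented basis $(v,jv)$ of $T_zC$, drop the Reeb component using $\iota_R d\alpha = 0$, apply the holomorphicity relation $\pi_\xi\circ df\circ j = J_\xi\circ\pi_\xi\circ df$, and read off non-negativity from compatibility of $J_\xi$ with $d\alpha|_\xi$ (the paper just writes the last step as $\Vert\pi_\xi\circ df(v)\Vert^2_{g_\xi}$). The vanishing analysis is also the same.
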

\begin{proof}
 For any vector $v \in TC$, we have an oriented basis $\langle v , jv\rangle$ of $TC$. We compute
  \begin{equation}
    \begin{split}
      f^*d\alpha(v,jv) = d\alpha(df(v),df\circ j(v)) = d\alpha(\pi_\xi\circ df(v), \pi_\xi \circ df \circ j(v)) \\
    = d\alpha(\pi_\xi\circ df (v), J_\xi \circ \pi_\xi \circ df (v)) = \Vert \pi_\xi \circ df(v) \Vert^2_{g_\xi} \geq 0
    \end{split}
  \end{equation}
and the expression can only vanish if $\pi_\xi \circ df(v) = 0$, which is to say that $df(v)$ is proportional to $R$. For the last assertion, observe that if $\pi_\xi\circ df_p$ has rank less than two, by choosing $v$ in the kernel we have $f^*d\alpha(v,jv) = 0$.
\end{proof}

Following \cite[\S 5.3]{behwz}, we define notions of energy for holomorphic curves in symplectizations.
\begin{definition}
Let $u = (f,a) : C \to N \times \R$ be a map. The $d\alpha$-\emph{energy} of $u$ is
\begin{equation}
  E_{d\alpha}(u) = \int_C f^*d\alpha,
\end{equation}
while the $\alpha$-\emph{energy} is 
\begin{equation}
  E_\alpha(u) = \sup_{\phi \in \cC} \int_C (\phi \circ a) da \wedge f^*\alpha,
\end{equation}
where $\cC$ is the set of functions $\phi : \R \to \R$ that are non-negative, compactly supported, and of integral one. The \emph{energy} of $u$ is the sum $E(u) = E_{d\alpha}(u) + E_\alpha(u)$.
\end{definition}
 Note that the $d\alpha$-energy only depends on $f$, the $N$-component of $u$.
The key property of maps satisfying $E(u) < \infty$ is that they are asymptotic to Reeb orbits at the punctures of $C$ \cite[Proposition 5.6]{behwz}.
\begin{proposition}
  Let $C$ be a Riemann surface with punctures, and let $u = (f,a): C \to N \times \R$ be a map satisfying $E(u) < \infty$. Suppose that the Reeb flow on $N$ has Morse-Bott manifolds of Reeb orbits. Then $u$ is asymptotic to a Reeb orbit at each puncture. Namely, with respect to holomorphic cylindrical coordinates $(s,t) \in [0,\infty) \times S^1$ near a puncture, there is a Reeb orbit $\gamma$ of period $T$ such that
  \begin{equation}
    \lim_{s\to \infty}f(s,t) = \gamma(\pm Tt),
  \end{equation}
  where the sign on the right-hand side is positive if $\lim_{s \to \infty} a = \infty$ and negative if $\lim_{s \to \infty} a = -\infty$.
\end{proposition}

\subsection{Floer cohomology}
\label{sec:floer-cohomology}

The moduli spaces of inhomogeneous pseudo-holomorphic maps to a fixed symplectic target $M$ may be used to setup a TQFT-type structure of which the symplectic cohomology is a part. Fix a base field $\K$. Assume that $M$ comes with a cylindrical end with natural coordinate $\rho$. The relevant references are \cite{seidel-biased,q-intersection,ritter-tqft}.

\begin{definition}
  A Hamiltonian $H^m : M \to \R$ that is of the form $H^m = me^\rho + C$ for large $\rho$ is said to have \emph{linear of slope $m$ at infinity}. A Hamiltonian $H^Q$ that is of the form $H^Q = C(e^\rho)^2+D$ for large $\rho$ is said to be \emph{quadratic at infinity}.
\end{definition}
 
The rough idea is that we can work either with a quadratic Hamiltonian $H^Q$, or with a family of linear Hamiltonians $\{H^m\}$, and take the limit as $m$ goes to infinity to eliminate the dependence on $m$. To explain the latter version, let $m \in \R$ be a number so that all 1-periodic orbits of $X_{H^m}$ lie in a compact subset of $M$. Equivalently, we require that $m$ is not equal to the period of any Reeb orbit in the contact hypersurface $\{\rho = 0\}$. Let $J$ be a compatible almost complex structure that is cylindrical for large $\rho$.

Take a time-dependent perturbation $K :S^1\times M \to \R$ of $H^m$ such that $K(t,x) = H^m(x)$ for $x$ outside a compact subset of $M$, which is such that all the 1-periodic orbits of $X_K$ are non-degenerate. These 1-periodic orbits form a basis of the cochain complex $CF^*(H^m)$. This complex receives a $\Z$-grading by Conley-Zehnder index as soon as we pick a trivialization of the canonical bundle $\Lambda^n_\C TM$, and homotopic trivializations produce the same grading. 

The differential $d$, of degree $1$, is defined as follows. Take a time-dependent family of compatible almost complex structures $J(t,x)$ that are equal to the given cylindrical $J(x)$ outside of a compact subset, which is chosen so as to make the moduli space of Floer trajectories regular. This is the moduli space of inhomogeneous pseudo-holomorphic maps $u(s,t): \R \times S^1\to M$ satisfying the equation
\begin{equation}
\begin{cases}
  \partial_su + J(t,u)\left(\partial_tu - X_K(t,u)\right) = 0\\
  \lim_{s\to\pm\infty} u(s,t) = \gamma_{\pm}(t)
\end{cases}
\end{equation}
where $\gamma_\pm$ are generators of $CF^*(H^m)$. The signed count of solutions to this equation (modulo the $\R$-translation action that shifts the coordinate $s$ on the domain) yields the coefficient of $\gamma_-$ in $d(\gamma_+)$. The cohomology of this cochain complex is the Floer cohomology $HF^*(H^m)$.

The Floer cohomologies $HF^*(H^m)$ for various values of the slope parameter $m$ are not isomorphic, but are related by \emph{continuation maps}, which count solutions to Floer's equation where the inhomogeneous term $X_K(t,u)$ now depends on $s$ as well (breaking the $\R$-translation symmetry) and it interpolates between the corresponding terms used to define $CF^*(H^m)$ (at $s \gg 0$) and $CF^*(H^{m'})$ (at $s \ll 0$). Assuming that $m' \geq m$ and the interpolation satisfies a monotonicity condition, this leads to a chain map (of degree 0)
\begin{equation}
  c_{m,m'}: CF^*(H^m) \to CF^*(H^{m'})
\end{equation}
The continuation maps form a directed system, and by passing to the direct limit (category-theoretical colimit), we get a definition of the symplectic cohomology of $M$.
\begin{equation}
  SH^*(M) = \lim_{m\to \infty} HF^*(H^m)
\end{equation}
By further continuation map arguments, one can show that $SH^*(M)$ (and even $HF^*(H^m)$) is independent of the choices of Hamiltonians and almost complex structures up to canonical isomorphism \cite[(3e)]{seidel-biased}.

We will have use for a few other parts of the structure, namely the Batalin-Vilkovisky (BV) operator and the product. The BV operator $\delta$ also counts cylinders, but where the perturbation data are allowed to vary in a one parameter family, parametrized by $r \in S^1$. Since this operation involves a family of domains, it is not part of the TQFT studied by Ritter \cite{ritter-tqft}, but rather part of larger structure known as a Topological Conformal Field Theory (TCFT). The following discussion of the BV operator is based on Seidel-Solomon \cite[\S 3]{q-intersection}.

 To define the BV operator $\delta: CF^*(H^m) \to CF^{*-1}(H^m)$, we use a perturbation $K_\delta(r,s,t,x)$ and family of almost complex structures $J_\delta(r,s,t,x)$ that also depend on the $s$-coordinate of the domain and an auxiliary parameter $r \in S^1$. On the ends of the domain cylinder these are required to be compatible with the data used to define the differential as follows: for $s \ll 0$, they simply agree, namely, $K_\delta(r,s,t,x) = K(t,x)$ and $J_\delta(r,s,t,x) = J(t,x)$, while for $s \gg 0$, they agree after a shift depending on $r$, namely, $K_\delta(r,s,t,x) = K(t+r,x)$ and $J_\delta(r,s,t,x) = J(t+r,x)$. The asymptotic condition at the $s \gg 0$ end then becomes $\lim_{s\to\infty} u(s,t) = \gamma_+(t+r)$. Counting solutions that are isolated even as the parameter $r$ is allowed to vary yields the degree $-1$ map $\delta$. One finds that $\delta$ is a chain map, and that compatibility of the $\delta$ for various $m$ yields a BV operator $\Delta: SH^*(M) \to SH^{*-1}(M)$. A useful property is that $\Delta$ vanishes on the image of the canonical map $H^*(M)\to SH^*(M)$.

The product is the TQFT operation associated to the pair of pants. It defines a map
\begin{equation}
  HF^*(H^m) \otimes HF^*(H^{m'})\to HF^*(H^{m''})
\end{equation}
as long as $m'' \geq m+m'$. Passing to the limit as $m \to \infty$ and $m'\to \infty$, this induces a product on $SH^*(M)$.

In the setup where a quadratic Hamiltonian is used, the periodic orbits will not in general be contained in any compact set, meaning that in order to achieve non-degeneracy and transversality, the perturbation of the Hamiltonian and complex structure cannot necessarily be compactly supported. This makes the compactness for pseudo-holomorphic curves more subtle. Ritter \cite{ritter-tqft} provides two approaches for overcoming this difficulty and defining the TQFT structure using quadratic Hamiltonians.

\subsection{Example of the complex torus}
\label{sec:complex-torus}

In the rest of the paper we are interested in complex dimension two, but in this section let the dimension be general $n$. Let $N \cong \Z^n$ be a lattice, $M = \Hom(N,\Z)$. Let $T = N \otimes_\Z \C^{\times} \cong (\C^{\times})^n$ be the complex torus. Let $z_i$ be a set of coordinates on $T$ in bijection with a basis of $M$. For the purpose of grading Floer cohomology, we use the complex volume form $\Omega = \prod_{i=1}^n \frac{dz_i}{z_i}$.Then
\begin{equation}
  \label{eq:sh-torus}
  SH^p(T) \cong \Z[N] \otimes \Lambda^p M \cong \Z[x_1^{\pm 1},\dots,x_n^{\pm 1}]\otimes \Lambda^p [x_1^\vee, \dots, x_n^\vee]
\end{equation}
where $x_i$ and $x_i^\vee$ represent dual bases of $N$ and $M$ respectively.

This computation is a special case of the symplectic homology of cotangent bundles \cite{abb-schwarz, salamon-weber, viterbo-2}. Since we are using the cohomological convention, and the convention that the canonical map $H^*(\bullet) \to SH^*(\bullet)$ has degree zero, the isomorphism relating the symplectic cohomology to the loop space of a spin manifold $Q$ is
\begin{equation}
\label{eq:sh-hloop}
  SH^{n-*}(T^*Q) \cong H_*(\cL Q)
\end{equation}
Note that \eqref{eq:sh-hloop} ``implicitly fixes all the
conventions used in the present paper (homology versus cohomology, the grading,
and the inclusion of non-contractible loops)'' \cite{seidel-sh-as-hh}.

The subspace $x_1^{a_1}x_2^{a_2}\cdots x_n^{a_n} \Lambda^* [x_1^\vee,\dots,x_n^\vee]$ is the cohomology of the component of $\cL T^n$ consisting of loops representing a certain class $(a_1,\dots,a_n) \in N = H_1(T^n,\Z)$, since this component is homotopy equivalent to $T^n$. The isomorphism \eqref{eq:sh-hloop} maps this subspace in the manner of Poincar\'{e} duality to the homology of the loop space.

For a manifold $Q$, string topology shows \cite{string-topology} that the space $\cL Q$ has a product given by composing families of loops when they are incident, and a BV operator given by spinning the parametrization of the loops. The isomorphism \eqref{eq:sh-hloop} identifies these structures as well (see \cite{abb-schwarz-product} for the product and \cite{abouzaid-sh-viterbo} for the full BV structure).

We will briefly explain how this computation can be done from a symplectic viewpoint, previewing the method used for general log Calabi--Yau surfaces. The complex torus can in some way serve as a local model for the general computation. We will describe in section \ref{sec:construction} a general method for finding ``nice'' Liouville structures on log Calabi--Yau surfaces. The main feature of such a ``nice'' structure is that it contains a contact-type hypersurface $\Sigma \subset (\C^\times)^n$ fibered by Lagrangian tori, such that the Reeb flow acts preserving the tori, and rotating each by some amount. Let $\Log : (\C^\times)^n \to \R^n$ be the standard torus fibration given in each coordinate by the logarithm of the absolute value. Let $S \subset \R^n$ be some large sphere centered at the origin, and let $\Sigma = \Log^{-1}(S)$ be the union of the torus fibers sitting over $S$. We arrange that $\Sigma$ is contact-type, with the evident torus fibration $\pi: \Sigma \to S$, that the Reeb vector field is tangent to the fibers of $\pi$, and that the Reeb flow acts on each fiber $\pi^{-1}(s)$ as a linear translation on the torus, say translation by the vector $v(s)$. What is important is that the direction of translation depends on the point in the base. Given the base point $s$, represent the torus $\pi^{-1}(s)$ as $\R^n/\Z^n$; if the direction of $v(s)$ is rational in this representation, the Reeb flow on $\pi^{-1}(s)$ is periodic (with some period $T(s)$). Now for each such $s$, and each multiplicity $r \in \N^+$, we have a torus $T_{s,r} \subset \cL(\C^\times)^n$ of periodic orbits lying on $\pi^{-1}(s)$ that wrap a primitive orbit $r$ times. In fact, we can arrange that the pairs $(s,r)$ indexing the tori correspond bijectively to the nonzero elements in $H_1((\C^\times)^n,\Z)$ (under the Hurewicz map $\cL M \to H_1(M,\Z)$).

If we use either a Hamiltonian $H$ with linear or quadratic growth on the cylindrical end, these periodic Reeb orbits correspond to tori of Hamiltonian orbits of period $1$ (with the exception that the Reeb period must be less than the asymptotic slope in the linear case). These orbits are evidently degenerate since they come in continuous families, but a generic time-dependent perturbation of the Hamiltonian near each torus $T_{s,r}$ (possibly different for each $s,r$) breaks this manifold of orbits into several non-degenerate orbits. The differential counts cylinders, hence can only connect orbits that are homologous, and so orbits corresponding to different tori $T_{s,r}$ are not connected by differentials. 

Using the isomorphism with loop-space homology, we see that, within the set of orbits corresponding to a single torus $T_{s,r}$, the Floer cohomology complex computes the cohomology of the component of the loop space containing $T_{s,r}$. As this component is homotopy equivalent to $T_{s,r}$, we identify this cohomology with $H^*(T_{s,r})$. The contractible orbits of $H$ contribute the ordinary cohomology of $(\C^\times)^n$. Another expression for the cohomology of $(\C^\times)^n$ is then
\begin{equation}
  SH^*((\C^\times)^n) \cong H^*((\C^\times)^n) \oplus \bigoplus_{s,r} H^*(T_{s,r})
\end{equation}
This shows us that the degree $0$ generators of symplectic cohomology correspond to the fundamental classes of the iterates of the periodic tori. 

We can use this computation to draw some conclusions about the structure of symplectic cohomology near a periodic torus. Since all the classes in $H^*(T_{s,r})$ are linearly independent in Floer cohomology, we see that the perturbation of Hamiltonian near each periodic torus must create at least $\binom{n}{k}$ orbits of Conley-Zehnder index $k$.

We record now some facts about the BV operator and the product that will be used in the computation of the differential on symplectic cohomology in Section \ref{sec:differential}.

The BV operator on symplectic cohomology is identified with the rotation of loops $H_*(\cL T)\to H_{*+1}(\cL T)$. Thus the action of $\Delta$ on $H^*(T_{s,r})$ is Poincar\'{e} dual to the operation of taking a cycle $T_{s,r}$ and rotating the parametrization of the loops. Under the isomorphism $H^*(T_{s,r}) \cong \Lambda^*\K^n$, this corresponds to contraction with the class of the orbit.

The product structure is also straightforward. There are maps
\begin{equation}
  H^*(T_{s,r})\otimes H^*(T_{s',r'}) \to H^*(T_{s'',r''})
\end{equation}
that can be characterized as follows. If we let $a(s,r) \in \pi_{0}(\cL(\C^{\times})^{2})\cong H_{1}((\C^{\times})^{2},\Z)$ denote the component of the free loop space containing $T_{s,r}$, the map above is nontrivial precisely when $a(s,r)+a(s',r') = a(s'',r'')$, and in this case it is given by a sort of cup product; this follows from the identification of this product with the Chas-Sullivan product on the free loop space homology. In particular, if we consider $s = s' = s''$, and $r'' = r + r'$, then the map is nontrivial, and the degree zero component of the target is in the image. In the symplectic cohomology, this product is represented by a pair of pants that is a small perturbation of an $r''$-to-1 branched covered cylinder (the ramification at one end, corresponding to the inputs, has two components mapping with multiplicities $r$ and $r'$, while at the other end we have one component mapping with multiplicity $r''$.) These cylinders will appear again later as low-energy contributions to the product the general case.

\section{Log Calabi--Yau surfaces}
\label{sec:log-cy-surfaces}

\subsection{Basics}
\label{sec:basics}
Our main objects of study are log Calabi--Yau pairs $(Y,D)$ with
positive, maximal boundary. We define these notions presently. For
the reader who finds this terminology perverse we note that this
combination of conditions is equivalent to saying that $Y$ is a surface and
$D$ is a nodal reduced anticanonical divisor, such that $D$ supports an ample divisor class. Readers who are familiar with these notions may skip this section but should note Lemma \ref{lem:divisor-topology}, which is relied on throughout the paper.

Let $Y$ denote a smooth projective surface over the complex
numbers. The canonical bundle is denoted $\Omega^2_Y$, and the
canonical divisor class is denoted $K_Y$. Let $D$ be an effective
divisor on $Y$.
\begin{definition}
  The pair $(Y,D)$ is a \emph{log Calabi-Yau pair} if $K_Y+D$ is a
  principal divisor, that is, $D$ lies in the anticanonical divisor
  class. Equivalently, there is an isomorphism $\Omega^2_Y(D) \cong
  \cO_Y$.
\end{definition}

In this paper we will usually assume that $D$ is a normal crossings divisor.
\begin{definition}
  An effective divisor $D$ on a surface $Y$ is a \emph{normal crossings divisor} if $D$ is a reduced Cartier divisor, and, writing $D = \sum_i D_i$ with irreducible components $D_i$, each $D_i$ is a smooth or nodal curve intersecting the other components transversely (so that $(D-D_i)|_{D_i}$ is a reduced divisor on $D_i$).
\end{definition}

There is a restriction on the topology of the pair $(Y,D)$ when $D$ is normal crossings. The proof is an exercise in adjunction.
\begin{lemma}
\label{lem:divisor-topology}
  Let $D$ be a connected normal crossings divisor in a smooth projective surface $Y$, such that the pair $(Y,D)$ is log Calabi--Yau. Then either
  \begin{enumerate}
  \item $D$ is a smooth genus one curve,
  \item \label{item:nodal-elliptic} $D$ is a irreducible nodal curve of arithmetic genus one, or
  \item \label{item:cycle-rationals} $D$ is a sum of smooth rational curves, whose intersection graph is a cycle.
  \end{enumerate}
\end{lemma}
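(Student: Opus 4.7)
The plan is to combine the adjunction formula with the log Calabi--Yau relation $K_Y + D \sim 0$. Writing $D = \sum_i D_i$ for the decomposition into irreducible components and using $K_Y + D_i \sim -\sum_{j\neq i} D_j$, adjunction for each component reads
\begin{equation}
  2 p_a(D_i) - 2 \;=\; (K_Y + D_i)\cdot D_i \;=\; -\sum_{j \neq i} D_i \cdot D_j.
\end{equation}
Distinct irreducible curves on a smooth surface meet with nonnegative intersection number, so this forces $p_a(D_i) \leq 1$ for every $i$. Combined with the standard bound $p_a(D_i) \geq 0$ for irreducible reduced curves, the right-hand sum $\sum_{j\neq i} D_i \cdot D_j$ must equal $0$ or $2$.

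\emph{Case 1: $D$ is irreducible.} The sum on the right is empty and $p_a(D_1)=1$. The normal crossings hypothesis restricts the singularities of $D_1$ to nodes, and the standard identity $p_a(D_1) = g(\text{normalization of } D_1) + \#\{\text{nodes of }D_1\}$ leaves only the two possibilities that $D_1$ is smooth of genus one or an irreducible rational curve with a single node, which are alternatives (1) and (2) of the lemma.

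\emph{Case 2: $D$ has at least two components.} Connectedness forces $\sum_{j\neq i} D_i \cdot D_j \geq 1$ for each $i$, which together with the displayed identity pins down $p_a(D_i) = 0$ and $\sum_{j\neq i} D_i \cdot D_j = 2$. Since an irreducible reduced curve with $p_a = 0$ is a smooth rational curve, each $D_i$ is a smooth $\mathbb{P}^1$ (in particular no component can itself be nodal, for that would force $p_a(D_i)\geq 1$ and hence $D_i$ disjoint from the rest of $D$, contradicting connectedness). To extract the cycle structure I form the dual graph with a vertex for each $D_i$ and an edge for each node of $D$; smoothness of the components combined with normal crossings implies that each node lies on exactly two distinct components, so the valence at $D_i$ equals $\sum_{j\neq i} D_i \cdot D_j = 2$. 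A connected multigraph in which every vertex has valence two is a cycle, yielding alternative (3). The only mildly delicate point is to allow the cycle to have length two (two components meeting at two distinct nodes), which the multigraph formulation accommodates naturally; beyond this the argument is essentially bookkeeping of arithmetic genera and intersection numbers, so I do not anticipate a substantive obstacle.
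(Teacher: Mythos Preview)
Your proof is correct, and it is in fact more direct than the paper's. Both arguments rest on adjunction together with $K_Y + D \sim 0$, but you apply adjunction \emph{componentwise} to obtain
\[
  2p_a(D_i) - 2 = -\sum_{j\neq i} D_i\cdot D_j,
\]
which immediately pins down the valence of each vertex of the dual graph as exactly~$2$ in the reducible case. The paper instead applies adjunction to the whole divisor to get $p_a(D)=1$, and then argues indirectly: a parity check modulo~$2$ rules out valence-one vertices, the existence of a cycle is deduced from the graph not being a tree, the uniqueness of the cycle and rationality of the components come from the genus constraint, and finally an Euler-characteristic count forces all valences to equal~$2$. Your route bypasses this chain by reading off the valence straight from the intersection-number identity; the paper's route has the mild advantage of computing $p_a(D)=1$ once and for all, but otherwise your argument is shorter and needs no parity or Euler-characteristic step.
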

\begin{proof}
  The arithmetic genus of the possibly nodal curve $D$ is given by the adjunction formula and the assumption $K_Y+D \sim 0$,
  \begin{equation}
    p_a(D) = \frac{(K_Y + D)\cdot D}{2} + 1 = 1.
  \end{equation}
  If $D$ is irreducible, it falls under one of the first two cases. 

  Suppose that $D = \sum_{i\in I} D_i$ is reducible with components
  $D_i$. Let $\Gamma(D)$ be the intersection graph of $D$. The vertex
  set $I$ is the index set for the components, each vertex is labeled
  by the arithmetic genus of the component, and we draw as many edges
  between two vertices as there are intersections between the
  corresponding components. Our assumption is that $\Gamma(D)$ is
  connected and has at least two vertices.

  We claim that each vertex of $\Gamma(D)$ has valence at least
  two. Suppose $\Gamma(D)$ has a vertex of valence one, say $k$.
 Then $(D-D_k)\cdot D_k = 1$. Using $-K_Y \sim D$, 
  \begin{equation}
    -K_Y\cdot D_k = (D-D_k) \cdot D_k + D_k \cdot D_k = 1 + D_k \cdot D_k.
  \end{equation}
  Reducing modulo $2$,
  \begin{equation}
    K_Y\cdot D_k \not\equiv D_k \cdot D_k \pmod{2},
  \end{equation}
  which is impossible by the adjunction formula. Thus every vertex of $\Gamma(D)$ has valence at least two.

  Since every vertex as valence at least two $\Gamma(D)$ cannot be a
  tree. Therefore it contains a cycle. By genus considerations there
  can only be one cycle, and every component of $D$ is rational.
  Since $\Gamma(D)$ is connected and contains only one homological
  cycle, $\chi(\Gamma(D)) = 0$. Thus the numbers of vertices and edges
  are equal. Since each vertex has valence at least two, and
  $\sum_{i\in I} \frac{1}{2}\text{valence}(i) = \#\text{edges}$, each
  vertex has valence exactly two. Thus $\Gamma(D)$ is a cycle.
\end{proof}

To see that the assumption that $D$ is connected is necessary, consider $Y= E \times \PP^1$ where $E$ is an elliptic curve, with $D = E \times \{0\} \cup E \times \{\infty\}$.

\begin{definition}
  A log Calabi--Yau pair $(Y,D)$ satisfying the hypotheses of Lemma \ref{lem:divisor-topology} is said to have \emph{maximal boundary}, or is called a \emph{Looijenga pair}, if it falls under cases \ref{item:nodal-elliptic} or \ref{item:cycle-rationals} of the conclusion.
\end{definition}

For the grading on symplectic cohomology of $U = Y\setminus D$, it is important to actually specify the trivialization of $\Omega^2_U$. There is a preferred trivialization, given by a meromorphic two-form on $Y$ that is non-vanishing and holomorphic on $U$ with simple poles along $D$. We have the following proposition, that in particular shows that the homotopy class of the trivialization does not depend on the choice of compactification.

\begin{lemma}
  \label{lem:volume-form} Let $(Y,D)$ be a log Calabi--Yau pair with maximal boundary. The complement $U = Y \setminus D$ carries a non-vanishing holomorphic two-form $\Omega$, characterized up to a scalar multiple by the property that $\Omega$ has simple poles along $D$. If $(Y,D)$ and $(Y',D')$ are two log Calabi--Yau compactifications of a given $U$, then the corresponding two-forms $\Omega$ and $\Omega'$ differ by a scalar multiple.
\end{lemma}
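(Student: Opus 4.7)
Existence and within-compactification uniqueness are straightforward. Since $(Y,D)$ is log Calabi--Yau, the line bundle $\Omega^2_Y(D)$ is trivial, so pick a nonzero global section $\Omega$; viewed as a rational 2-form on $Y$ it has divisor $-D$, and hence restricts to a nowhere vanishing holomorphic 2-form on $U$ with simple poles along $D$. Any other such generator differs by an element of $H^0(Y, \cO_Y^\times) = \C^\times$.

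For independence of the compactification, take $\Omega$ on $(Y,D)$ and $\Omega'$ on $(Y',D')$, and pass to a common smooth proper $Y''$ lying over both via birational morphisms $\pi: Y'' \to Y$ and $\pi': Y'' \to Y'$ that are isomorphisms over $U$ (obtained by resolving the graph of the birational map between $Y$ and $Y'$). The pullbacks $\pi^*\Omega$ and $(\pi')^*\Omega'$ are rational 2-forms on $Y''$, and their ratio $\tilde f := (\pi')^*\Omega' / \pi^*\Omega$ is a rational function on $Y''$ which is holomorphic and nowhere zero on the dense open $U \subset Y''$. To prove the lemma it suffices to show $\mathrm{div}_{Y''}(\tilde f) = 0$, for then $\tilde f$ is a nonvanishing regular function on the projective variety $Y''$ and hence a constant.

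Using the standard formula $\mathrm{div}_{Y''}(\pi^*\sigma) = \pi^* \mathrm{div}_Y(\sigma) + K_{Y''/Y}$ for the pullback of a rational 2-form $\sigma$, one gets $\mathrm{div}(\tilde f) = \pi^*(K_Y + D) - (\pi')^*(K_{Y'} + D')$, which is a priori supported on the boundary $Y'' \setminus U$. The cleanest way to show this vanishes is to reduce to a single elementary modification: any two log Calabi--Yau compactifications of $U$ with maximal boundary are related by a sequence of blowups and blowdowns of nodal points of the boundary. Under the blowup $\tilde\pi$ of a single node $p \in D_i \cap D_j$, a direct local computation in coordinates with $D = \{xy = 0\}$ shows that $\tilde\pi^*\Omega$ acquires a simple pole along the new exceptional divisor while retaining simple poles along the strict transforms of $D_i$ and $D_j$; hence $\mathrm{div}(\tilde\pi^*\Omega) = -\tilde D$ for the new reduced boundary $\tilde D$, so $\tilde\pi^*\Omega$ generates $\Omega^2_{\tilde Y}(\tilde D) \cong \cO_{\tilde Y}$ and is itself the canonical form of $(\tilde Y, \tilde D)$ up to scalar. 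Iterating along the sequence relating $(Y,D)$ to $(Y',D')$ yields the conclusion. The main obstacle is invoking the structural fact about nodal modifications; if a self-contained derivation is preferred, one can instead argue directly on $Y''$ by computing the pole orders of $\pi^*\Omega$ and $(\pi')^*\Omega'$ along each boundary component, but the case analysis is of comparable difficulty.
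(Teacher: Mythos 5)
Your proposal is correct in outline, but it takes a genuinely different route from the paper, and its crux rests on an input you do not prove. The paper never passes to a common resolution or factors the birational map: it works directly with $p: Y \dashrightarrow Y'$ (the identity on $U$), forms the rational function $f = \Omega/p^*\Omega'$ on $Y$, whose zeros and poles are contained in $D$, and then shows $f$ is constant by a period argument intrinsic to $U$: the integral of $\Omega$ over a small torus linking a node of $D$ is a nonzero residue, the tori linking the various nodes are homologous in $U$ because the boundary is a cycle of rational curves, and multiplying by a rational function with zeros or poles along components of $D$ would destroy this nonzero period; since the period of $p^*\Omega'$ is likewise nonzero and computable inside $U$, the ratio $f$ has neither zeros nor poles and is constant. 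Your route replaces this with birational geometry: the reduction to corner (node) blowups plus the local computation $\frac{dx\wedge dy}{xy} \mapsto \frac{du\wedge dv}{uv}$ is fine, and it has the virtue of working verbatim over any base field. What it costs is the structural fact that any two log Calabi--Yau compactifications of $U$ with maximal boundary are connected through Looijenga pairs by corner blowups and blowdowns. That statement is true and is available in the literature on anticanonical pairs (it is essentially the ``toric blowup'' factorization used by Gross--Hacking--Keel), but it is a nontrivial theorem of at least the same depth as the lemma itself, and you invoke it without proof or a precise reference; be careful also that some natural proofs of it come close to using the very statement about pole orders you are trying to establish.

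One further caution about your proposed fallback. Showing $\mathrm{div}_{Y''}(\tilde f) = 0$ ``by computing pole orders'' is not a routine case analysis: $\mathrm{div}(\tilde f)$ is a principal divisor supported on $Y''\setminus U$, and since $U$ generally carries nonconstant units (already for $U = (\C^\times)^2$, where $x$ and $y$ are units), no purely numerical or positivity argument can force such a divisor to vanish. Some input that singles out the log Calabi--Yau form among its multiples by units is required, and that is exactly what the paper's nonzero-residue observation supplies.
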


\begin{proof}
  The second assertion implies the first, so it suffices to consider two pairs $(Y,D)$ and $(Y',D')$ such that $Y\setminus D = U = Y' \setminus D'$. Let $\Omega$ (resp. $\Omega'$) be any meromorphic form on $Y$ (resp. $Y'$) that is non-vanishing and holomorphic on $U$ and has simple poles along $D$ (resp. $D'$). There is a birational map $p: Y \dashrightarrow Y'$ that is the identity on $U$. The pull-back $p^*\Omega'$ is a meromorphic form on $Y$, that is non-vanishing and holomorphic on $U$. Thus the ratio $f = p^*\Omega'/\Omega$ is a rational function on $Y$, whose divisor of zeros and poles is contained in $D$.

When $Y$ and $Y'$ are the same, the condition that both $\Omega$ and $\Omega'$ have the same divisor of poles implies that $f$ has no zeros or poles, and hence is constant.

If $Y$ and $Y'$ are distinct, then since any birational map factors into blow-ups, it suffices to prove the lemma when $p: Y \dashrightarrow Y'$ is a blow up of $Y'$. The exceptional locus is necessarily contained in $D$, and by Lemma \ref{lem:divisor-topology}, the exceptional curves of $p$ must map to nodes of $D'$.

Now we make use of another property of $\Omega$, namely that it has nonzero residue at any node of $D$. This residue is the integral of $\Omega$ on a small torus linking the node. Indeed, picking local analytic coordinates $(z_1,z_2)$ such that the node takes the form $\{z_1z_2 = 0\}$, the condition that $\Omega$ has simple poles along $D$ is equivalent to the condition that its lowest order term is proportional to $dz_1\wedge dz_2/z_1z_2$. 

 Since $\Omega$ is a closed form on $U$ (being a holomorphic top form), we must obtain the same residue by integrating over any homologous torus, and since the boundary divisor is a cycle of rational curves, the tori at each of the nodes are homologous to each other. Furthermore, since $p: Y \dashrightarrow Y'$ is a blowup at some nodes of $D'$, the homology classes of the linking tori in $Y$ and $Y'$ correspond under $p|_U$.

The form $\Omega'$ on $Y'$ also has a nonzero residue at any linking torus of $D'$. Since this residue is given by an integral inside $U$, the same must be true of $p^*\Omega'$ on $Y$. 

Suppose now that the ratio $f$ is not constant. Then $f$ must have zeros somewhere in $Y$. Since it is non-vanishing in $U$, we conclude that it vanishes along some component $D_1$ of $D$. But then writing $p^*\Omega' = f\Omega$, we see that the zero of $f$ cancels the pole of $\Omega$ on $D_1$, implying that $p^*\Omega'$ would have zero residue at any node of $D$ involving $D_1$, which is a contradiction.
\end{proof}

\begin{definition}
  A reduced divisor $D = \bigcup_{i=1}^n D_i$  is said to \emph{support an ample divisor}, if some linear combination of the irreducible components $A = \sum_{i=1}^n a_iD_i$ is ample. If there is such a combination with all coefficients $a_i$ strictly positive, then $D$ is called \emph{positive}.
\end{definition}

\begin{lemma}
\label{lem:nakai}
  Let $D$ be a connected divisor in a projective surface $Y$. If $D$ supports an ample divisor, then $D$ supports an ample divisor $A = \sum_{i=1}^n a_iD_i$ where all coefficients $a_i$ are strictly positive.
\end{lemma}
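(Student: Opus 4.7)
The plan is to produce the desired positive combination by applying the Perron--Frobenius theorem to the intersection matrix. Let $A_0 = \sum a_i^{(0)} D_i$ be the ample divisor guaranteed by the hypothesis, and form the symmetric matrix $M \in \R^{n\times n}$ with entries $M_{ij} = D_i \cdot D_j$. Since distinct irreducible components meet in a finite set of points counted with positive multiplicity, the off-diagonal entries $M_{ij}$ are all non-negative, and the graph with an edge between $i$ and $j$ whenever $M_{ij} > 0$ is exactly the dual intersection graph of $D$, which is connected by assumption. Moreover, since $A_0$ is ample one has $(a^{(0)})^{T} M\, a^{(0)} = A_0^2 > 0$, so by the Rayleigh quotient characterization $\lambda := \lambda_{\max}(M) > 0$.

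Next I would choose $c$ large enough that $M + cI$ has non-negative entries (take $c \geq -\min_i D_i^2$) and that its spectral radius is $\lambda + c$ (take $c \geq -\lambda_{\min}(M)$). The matrix $M + cI$ is then entrywise non-negative and irreducible, so the Perron--Frobenius theorem produces a strictly positive eigenvector $b = (b_1, \dots, b_n)$ for its spectral radius; subtracting $cI$ gives $Mb = \lambda b$ with $b_i > 0$ for all $i$ and $\lambda > 0$.

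Setting $B = \sum b_i D_i$, I would then verify Nakai--Moishezon directly. We have $B \cdot D_j = (Mb)_j = \lambda b_j > 0$ for every component of $D$, and $B^2 = b^{T} M b = \lambda \sum b_i^2 > 0$. For an irreducible curve $C$ not contained in $D$, each intersection number $D_i \cdot C$ is non-negative, and at least one is strictly positive, since otherwise $A_0 \cdot C = 0$ would contradict the ampleness of $A_0$; combined with $b_i > 0$ this forces $B \cdot C > 0$. Hence $B$ is an ample $\R$-divisor supported on $D$ with all coefficients strictly positive. Approximating $b$ by a nearby rational vector with positive entries (still in the open ample cone) and then clearing denominators produces the desired integral ample divisor $A = \sum a_i D_i$ with $a_i \in \Z_{>0}$.

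The main obstacle is ensuring that the Perron--Frobenius eigenvector pairs with a \emph{strictly positive} eigenvalue of $M$, not merely the non-negative spectral radius of the shifted matrix $M + cI$; this is where ampleness of $A_0$ is essential, via $A_0^2 > 0$. Connectedness of $D$ enters precisely to invoke the irreducibility clause of Perron--Frobenius, which is what upgrades the eigenvector from non-negative to strictly positive. Once both inputs are in place, the Nakai--Moishezon check is routine because every irreducible curve $C$ in $Y$ must meet $D$ (again by ampleness of $A_0$).
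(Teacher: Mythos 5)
Your argument is correct, but it is genuinely different from the paper's. The paper (following Fujita/Zaidenberg) argues combinatorially: it introduces the set $\mathcal{S}$ of effective divisors supported on $D$ that meet every component of their own support positively, shows the positive part $P$ of the given ample divisor $A = P - N$ lies in $\mathcal{S}$, and then enlarges the support one component at a time (replacing $P$ by $mP + D_j$ with $m \gg 0$), using connectedness of $D$ to exhaust all components before invoking Nakai--Moishezon. You replace this induction by a single application of Perron--Frobenius to the intersection matrix $M_{ij} = D_i\cdot D_j$: connectedness gives irreducibility of the shifted matrix, ampleness gives $A_0^2 > 0$ and hence $\lambda_{\max}(M) > 0$ via the Rayleigh quotient, and the strictly positive Perron eigenvector passes the Nakai--Moishezon test exactly as you say (with the same use of ampleness of $A_0$ to see that curves off $D$ must meet $D$). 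The trade-off: the paper's argument is elementary and produces integral coefficients directly, while yours is shorter and yields an essentially canonical positive vector, at the cost of working first with real coefficients. One small refinement: rather than asserting that $B$ is an ample $\R$-divisor (which invokes the Nakai criterion for $\R$-divisors, i.e.\ Campana--Peternell, before you can use openness of the ample cone), you can simply perturb $b$ to a nearby positive rational vector $b'$ first and recheck the three Nakai conditions for $B' = \sum b_i' D_i$ directly; the conditions $B'\cdot D_j > 0$ and $(B')^2 > 0$ persist by continuity, and $B'\cdot C > 0$ for irreducible $C \not\subset D$ holds for \emph{any} strictly positive coefficient vector, so only the classical rational Nakai--Moishezon criterion is needed. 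This is a presentational point, not a gap.
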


\begin{proof}
  This proof is drawn from \cite[\S 2.4]{fujita-noncomplete} via \cite{zaidenberg-exotic}. Consider the set 
  \begin{equation}
    \mathcal{S} = \left\{A = \sum_{i \in I} a_iD_i \mid I \subset \{1,\dots,n\}, (\forall i \in I)(a_i > 0 \text{ and } A\cdot D_i > 0)\right\}
  \end{equation}
  In words, $\mathcal{S}$ is the set of effective divisors, supported
  on $D$, that have positive intersection with any irreducible
  component of their support. The conclusion follows once we know that $\mathcal{S}$ contains a divisor $B$ whose support is all of $D$. By definition, $B$ contains every $D_i$ with a strictly positive coefficient $a_i$. The Nakai--Moishezon criterion implies that $B$ is ample: First, by definition $B\cdot D_i > 0$ for every irreducible component of $D$. Then we also see that $B^2 = \sum_{i=1}^n a_i(B\cdot D_i) > 0$. Lastly, if we consider an irreducible curve $C$ that is not a component of $D$, we have $C\cdot D_i \geq 0$ for all $i$. The fact that $D$ supports an ample divisor implies that $C$ is not disjoint from $D$, so $C \cdot D_i > 0$ for some $i$. Thus $B \cdot C > 0$.

It remains to show that $\mathcal{S}$ contains an element whose support is all of $D$. First we show that $\mathcal{S}$ is not empty. Start with some ample divisor $A = \sum_i a_i D_i$ supported on $D$, and write $A = P - N$, where $P$ and $N$ are effective and have no components in common. Let $D_i$ be contained in the support of $P$. As $A\cdot D_i > 0$, we have $P\cdot D_i > N \cdot D_i$. Since $D_i$ is not contained in the support of $N$, we have $N \cdot D_i \geq 0$, and so $P \cdot D_i > 0$. Thus $P \in \mathcal{S}$.

Now we must add the other components of $D$ while staying in $\mathcal{S}$. Let $D_j$ be an irreducible component of $D$ that is not contained in $P$, but which does intersect $P$ non-trivially. Then $mP + D_j \in \mathcal{S}$ for $m \gg 0$. Indeed, $(mP+D_j)\cdot D_j > 0$ as long as $m > -D_j^2/P\cdot D_j$ (the denominator is greater than zero by assumption that $D_j$ intersects $P$ and is not contained in it). 

Because we assumed that $D$ is connected, we may iterate the previous step to add each time an irreducible component of $D$ that is not contained in the support but which intersects it non-trivially. Thus $\mathcal{S}$ contains an element whose support is all of $D$.
\end{proof}

\subsection{Examples}
\label{sec:examples}

  Observe that a pair $(Y,D)$ is a log Calabi--Yau pair with maximal boundary and $D$ ample if and only if $Y$ is a del Pezzo surface and $D$ is a nodal reduced anticanonical divisor. However, the weaker assumption that $D$ merely supports an ample divisor includes infinitely many more types of surfaces. For example, we can take $Y = \PP^2$ blown up any number of times, as long as these blowups all lie on a single conic. We let $D= Q\cup L$ be the union of the proper transform of that conic $Q$ with some line $L$. This $D$ is anticanonical and nodal, and $mL + Q$ is ample for $m \gg 0$.

Here we list some examples of log Calabi--Yau pairs. 

\subsubsection{The projective plane}
Let $Y = \PP^2$. As $\Omega^n_{\PP^2} \cong \cO_{\PP^2}(-3)$, any cubic curve will serve for $D$. There are essentially four possibilities. In going from each case to the next, we smooth a node of $D$. This changes the complement $U$ by adding a $2$--handle.
\begin{enumerate}
\item $D$ is the union of three lines in general position. Then $U = Y \setminus D \cong (\C^\times)^2$, and the Betti numbers are $b_1 = 2$, $b_2 = 1$.
\item $D$ is the union of a conic and a line in general position. The Betti numbers of $U$ are $b_1 = 1$, $b_2 = 1$. Floer cohomology for Lagrangian submanifolds in $U$ was studied in \cite{binodal}.
\item $D$ is a nodal cubic curve. The Betti numbers of $U$ are $b_1 = 0$, $b_2 = 1$.
\item $D$ is a smooth cubic curve. The pair $(Y,D)$ does not have maximal boundary. The Betti numbers of $U$ are $b_1 = 0$, $b_2 = 2$.
\end{enumerate}


\subsubsection{A cubic surface}
\label{sec:cubic}

Let $Y$ be a smooth cubic surface in $\PP^3$. As is well-known, $Y$ contains 27 lines, and it is possible to choose three of them intersecting in a $3$--cycle so that their sum is an anticanonical divisor. To see this, realize $Y$ as the projective plane blown up in six general points $p_1,\dots,p_6$, giving six exceptional curves $E_1,\dots,E_6$. Let $L_{ij}$ denote the proper transform of the line through $p_i$ and $p_j$ (there are 15 of these). Let $C_k$ denote the proper transform of the conic through five of the points, all except $p_k$ (there are six of these). The curves $E_i, L_{ij}, C_k$ are the 27 $(-1)$--curves that are mapped to lines by the anticanonical embedding $Y \to \PP^3$.

For two indices $a$ and $b$, consider the configuration $L_{ab},E_b,C_a$. This means the line through $p_a,p_b$, the exceptional curve over $p_b$, and the conic that does not contain $p_a$. Clearly $L_{ab}$ and $E_b$ intersect over $p_b$, while $C_a$ and $E_b$ intersect since $C_a$ passes over $p_b$. Also, $L_{ab}$ and $C_a$ intersect since their projections to $\PP^2$ intersect in two points: one point is $p_b$, and the other point is none of the $p_i$ (which are assumed to be in general position), and this latter intersection point persists in the blow-up.

The divisor $D = L_{ab}+E_b+C_a$ is anticanonical and very ample. 


\subsubsection{The degree 5 del Pezzo surface}
\label{sec:M05}

Let $Y$ be the (unique) degree 5 del Pezzo surface, realized as the blow-up of $\PP^2$ at 4 general points $p_1,p_2,p_3,p_4$. There are 10 $(-1)$--curves in $Y$, namely the 4 exceptional curves $E_i$ coming from the blow-ups, and $\binom{4}{2} = 6$ proper transforms of the lines passing through two of the points $L_{ij}$. To get an anticanonical divisor, choose a partition of the set $\{1,2,3,4\}$ into $\{i,j\}$ and $\{k,\ell\}$ (there are 12 such choices). Then take $D = L_{ki} + E_i + L_{ij} + E_j + L_{j\ell}$. Thus $D$ is a $5$--cycle of $(-1)$--curves which is anticanonical and ample. 


\subsubsection{``Punctured'' $A_n$ Milnor fibers}
\label{sec:a-n-milnor-fiber}

Let $V = \{x^2+y^2+z^{n+1} = 1\} \subset \C^3$ be the Milnor fiber of the two-dimensional $A_n$ singularity. It is possible to compactify $V$ by adding two rational curves \cite[\S 7.1]{evans-mcg}. Start with $\PP^2$, with homogeneous coordinates $[x:y:z]$. Blow up the $n+1$ points $[\xi_k : 0 : 1]$ along the $x$--axis, where $\xi_k = \exp(2\pi i k/(n+1))$, and call the result $Y$. Let $P_t$ be pencil on $Y$ that is the preimage of the pencil of lines through $[0:1:0]$. In an affine chart these are depicted as the lines parallel to the $y$--axis. The line at infinity $\{z=0\}$ is a fiber of this pencil, $P_\infty$. The pencil $P_t$ on $Y$ has $n+1$ singular fibers, where the line passes through a blown-up point. Let $C$ denote the proper transform of the $x$--axis; it is a section of the pencil and passes through all of the exceptional curves. The complement $Y \setminus (P_\infty \cup C)$ is isomorphic as a complex manifold to the Milnor fiber $V$ \cite[Lemma 7.1]{evans-mcg}.

Although $V$ satisfies $c_1(V) = 0$, it is not log Calabi--Yau in the sense of this paper. The anticanonical class of $Y$ is
\begin{equation}
  -K_Y \sim 3H - \sum_{i=1}^n E_i \sim 2P_\infty + C 
\end{equation}
where $H$ denotes the pull-back of the hyperplane class on $\PP^2$. The issue is the coefficient of $2$ in front of $P_\infty$, which means that a holomorphic volume form on $V$ will have a pole of order $2$ along $P_\infty$. 

We can get something that falls into our setting by removing another smooth fiber of the pencil, say $P_0$. Write $U = Y \setminus (P_0 \cup P_\infty \cup C)$. We call $U$ the punctured $A_n$ Milnor fiber, since we puncture the line parametrizing the pencil on $V$. As $-K_Y \sim P_0 + P_\infty + C$, this is a log Calabi--Yau surface.

The self-intersections are $P_0^2 = 1, P_\infty^2 = 1, C^2 = 1-n$. The compact surface $Y$ is not Fano unless $n \leq 2$, since $-K_Y \cdot C = 3-n$. Nevertheless, the divisor $P_0\cup P_\infty \cup C$ supports an ample divisor. Indeed, $aP_0 + bP_\infty + C$ is ample as long as $a > 0$, $b > 0$, and $a + b > n - 1$. 


\begin{remark}
   The point of view that $U$ is the form of the Milnor fiber that is ``truly Calabi--Yau'' comes from the Strominger--Yau--Zaslow (SYZ) picture and is discussed in \cite[\S 9.2]{aak-lagrangian-fibrations}. This lines up well with \cite{ghk}, which also used the SYZ picture (in the form of the Gross--Siebert program) as its starting point.
\end{remark}


\section{Construction of the Liouville domain}
\label{sec:construction}

In this section we will construct a Liouville domain associated to a log Calabi--Yau pair $(Y,D)$ where $D$ is positive. In order to obtain symplectic forms, we use Lemma \ref{lem:nakai}, and choose an ample divisor $A = \sum a_iD_i$ supported on $D$, such that each coefficient $a_i$ is strictly positive. The Liouville domain we construct is the symplectic model for the complement $U = Y\setminus D$. Since we are ultimately interested in symplectic cohomology, and symplectic cohomology is an invariant of Liouville deformation, we are free to take a particular representative of the Liouville deformation class that has convenient properties.\footnote{In particular, the symplectic cohomology does not depend on the choice of ample divisor $A = \sum a_{i}D_{i}$, although the coefficients $a_{i}$ will appear in the local expressions for the Liouville class.} This idea was used by Seidel \cite{seidel-biased} and Mark McLean \cite{mclean-growth} to understand the growth rate of symplectic cohomology. In fact, the first four steps of the construction follow \cite[\S 4]{seidel-biased} very closely, though at some points we extract more precise information for our particular cases. The fifth step is new, and highlights an interesting property of contact hypersurfaces in $U$. 

\subsection{Basic Liouville structure on an affine variety}
\label{sec:liouville-basics}
Let $Y$ be a smooth projective variety with a positive divisor $D$, and let $A = \sum a_iD_i$ be a strictly positive combination of components that is ample. Then there is a holomorphic line bundle $\cL \to Y$ and a section $s \in H^0(Y,\cL)$ such that $A = s^{-1}(0)$. The line bundle $\cL$ admits a Hermitian metric $\Vert\cdot\Vert$ such that, if $F$ is the curvature of the unique connection compatible with the metric and the holomorphic structure, then $\omega = 2i F$ is a K\"{a}hler form. On the complement $U = Y\setminus D$, the function $\phi = -\log\Vert s\Vert$ is a K\"{a}hler potential since
\begin{equation}
\label{eq:kahler-potential}
   2iF|_U = dd^c(-\log\Vert s \Vert) 
\end{equation}
Thus the symplectic form $\omega$ on $U$ is exact and $\lambda = d^c\phi$ is a primitive. Here $d^c = -i(\partial - \bar{\partial})$, and for a function $f$ this means $d^cf = -df\circ J$.

The function $\phi$ is clearly proper and bounded below. A simple
lemma \cite[Lemma 4.3]{seidel-biased} shows that the set of critical
points of $\phi$ is compact when $D$ has normal crossings, so by
choosing a sufficiently large regular value $C$, we find that
$\overline{U} = \phi^{-1}(-\infty,C]$ is a compact subset containing
all of the topology of $U$.

The Liouville vector field is defined by the condition $\iota_Z \omega = \lambda$. Thus
\begin{equation}
  0 \leq g(Z,Z) = \omega(Z,JZ) = \lambda(JZ) = -d\phi\circ J (JZ) = d\phi(Z)
\end{equation}
and equality can only hold when $Z = 0$, whence $\lambda = 0$, whence $d\phi = 0$. Thus $Z$ points strictly outwards along $\partial{\overline{U}} = \phi^{-1}(C)$.

Thus $\overline{U}$ equipped with the structures $\omega,\lambda,Z$ is a Liouville domain.

\subsection{Refinements of the basic construction}
\label{sec:refinements}

We assume that, in addition to being positive, $D$ is anticanonical, so that $(Y,D)$ is a log Calabi--Yau pair. We also assume that the pair has maximal boundary, which means that $D$ is normal crossings with at least one node. By Lemma \ref{lem:divisor-topology}, $D$ is either isomorphic to the irreducible nodal genus one curve or it is a cycle of rational curves. While the case of an irreducible nodal curve appears exceptional, it can be subsumed into the other case by blowing up the node; this replaces the irreducible nodal genus one curve with a cycle of two smooth rational curves. Because blowing up the a point on $D$ does not change $U= Y \setminus D$, we may compute the symplectic cohomology after the blowup. After this modification, we can assume that $D$ is a cycle of smooth rational curves.


To begin with we start with a basic Liouville structure $\omega,\lambda,Z$ on $U$ as in the previous section.

\subsubsection{Step 1: constructing local torus actions along the divisor}
\label{sec:step1}

This step is basically the same as in \cite{seidel-biased}, but we get a little more structure along the smooth parts of the divisors.

Let us write $D = \bigcup_{i=1}^r D_i$, where each irreducible component $D_i$ is a smooth rational curve, and the components are ordered cyclically according to some chosen orientation of the intersection graph. So $D_i \cdot D_{i+1} = 1$. The first thing to do is to make consecutive divisors symplectically orthogonal. According to \cite[\S 4, Step 1]{seidel-biased} we may choose the metric on $\cL$ so that in local coordinates $(z_1,z_2)$ near $D_i\cap D_{i+1}$, the divisor is $D=\{z_1z_2 = 0\}$ and the K\"{a}hler form is standard. This neighborhood therefore admits a Hamiltonian $T^2$ action that rotates the complex coordinates $(z_1,z_2)$, with moment map $m(z_1,z_2) = \frac{1}{2}(|z_1^2|,|z_2^2|)$.

\begin{remark} 
  Our strategy is to progressively extend these $T^2$
  actions to larger subsets of $Y$. This will involve constructing a group action on some subset, and saying that it agrees with ones previously constructed on the overlap. To say that two group actions
  ``agree'' really means to say that there is an isomorphism of the acting
  groups that intertwines the actions. Thus when we say ``such and
  such $T^2$ actions agree,'' we should really add ``up to an element
  of $\Aut(T^2) \cong \GL(2,\Z)$''. Alternatively, since all of the
  actions we consider are faithful, we may simply speak of agreement
  of subgroups of the diffeomorphism group.
\end{remark}

The next thing to do is to construct a Hamiltonian $S^1$ actions in a neighborhood of each $D_i$. This is also present in \cite[\S 4, Step 1]{seidel-biased}, but we shall provide full details since we need to extend the argument. Here is the precise claim:

\begin{claim}
  For each $i$, there is a Hamiltonian $S^1$ action in a neighborhood of $D_i$, fixing $D_i$ and rotating its normal bundle, that is furthermore compatible with the previously constructed $T^2$ actions at the nodes in the sense that, in a neighborhood of the intersection $D_i \cap D_{i\pm 1}$, the $S^1$ rotating the normal bundle of $D_i$ agrees with the one-parameter subgroup of $T^2$ that rotates $D_{i\pm 1}$ and fixes $D_i$.
\end{claim}

\begin{proof}
  The tool to achieve this is the symplectic tubular neighborhood theorem, which says that the only local invariants of a symplectic embedding are the symplectic structure on the submanifold itself and the normal bundle as a symplectic vector bundle. Since rank two symplectic vector bundles over surfaces are determined by their degree, it will suffice to consider the model space $X_i$ that is a degree $D_i^2$ complex line bundle over $D_i$. The space $X_i$ admits K\"{a}hler structures that are invariant under the $S^1$ action that rotates the fibers of the bundle projection $p_i : X_i \to D_i$. We want such a K\"{a}hler structure with the property that, if $q \in D_i$ is a nodal point (a point where $D_i$ intersects another component of $D$), there is a neighborhood $V$ of $q$ such that $X_i|_V$ is trivial as a bundle of K\"{a}hler manifolds (so it is metrically the product of $V$ and $p_i^{-1}(q)$). This may be done by choosing any K\"{a}hler metric $g$ on $X_{i}$ that has the desired behavior near the nodal points, and then averaging it with respect to the holomorphic $S^1$ action to obtain $\bar{g}$; since the desired behavior near $q$ forces $g$ to $S^1$-invariant there, $\bar{g} = g$ near the nodal points.

Now we must compare neighborhoods of $D_i$ in $X_i$ and in the actual log Calabi-Yau $Y$. By scaling $\bar{g}$, we can ensure that $D_i$ has the same symplectic area in $X_i$ as it does in $Y$. In order to apply the symplectic tubular neighborhood theorem, we must construct an isomorphism of symplectic normal bundles $\psi : N(D_i/X_i) \to N(D_i/Y)$ covering a symplectomorphism of $D_i$. We take the map on $D_i$ to be the identity near the nodal points, and we also prescribe that near the nodal points the isomorphism of symplectic normal bundles matches the trivializations coming from the local product structures on $X_i$ and $Y$, respectively. Now we consider the composition $\exp_Y \circ \psi \circ \exp_{X_i}^{-1}$, where $\exp_Y$ and $\exp_{X_i}$ are the exponential maps of the chosen K\"{a}hler metrics on $Y$ and $X_i$. By the differentiable tubular neighborhood theorem, this map is a diffeomorphism between some neighborhoods of $D_i$ in the two manifolds, and because of the care we have taken near the nodal points, it matches the local product structures in neighborhoods of the nodal points, and so the $S^1$ actions match as well. Finally, we use the Moser argument to correct this map to be a symplectomorphism everywhere, see \cite[Lemma 3.14]{mcduff-salamon-intro}; an analysis of the Moser argument in this situation shows that it does not change the map in neighborhoods of the nodal points, where the symplectic structures already match. Carrying over the $S^1$ action on $X_i$ through the symplectomorphism, we are done.
\end{proof}

The last thing to do is to show that a whole neighborhood of $D_i$ admits a $T^2$ action that is compatible with all previously constructed actions. Note that this will only work if $D_i$ is a sphere containing exactly two nodal points, whereas the preceding paragraphs in this subsection work for any normal crossings divisor in an algebraic surface.

\begin{claim}
  For each $i$, there is a Hamiltonian $T^2$ action in a neighborhood of $D_i$, such that the previously constructed $S^1$ action agrees with the action of a one-parameter subgroup of $T^2$, and such that near the nodes this $T^2$ action agrees with the one previously constructed. 
\end{claim}

\begin{proof}
  We start from the proof of the previous claim, where we showed that a neighborhood $V_i$ of $D_i$ in $Y$ is symplectomorphic to a neighborhood of $D_i$ in $X_i$. Since $p_i : X_i \to D_i$ is a vector bundle with symplectic fibers, we find that $V_i$ has the structure of a symplectic fibration over $D_i$, we use the same letter to denote the projection $p_i : V_i \to D_i$. The fibers here are disks, and the $S^1$ action preserves the fibers. Let $q_0$ and $q_\infty$ denote the two nodal points in $D_i$. Near $q_0$, we have the $T^2$ action already constructed. Let $\rho_0 : S^1 \to T^2$ be the one parameter subgroup that rotates the \emph{base} direction at $q_0$ and fixes the \emph{fiber} $p_i^{-1}(q_0)$. Let $m_0$ be the local moment map for this action. Because of the local product structure near $q_0$, $m_0$ has the form $h_0 \circ p_i$, where $h_0$ is a function on $D_i$ defined near $q_0$. We may assume that $h_0$ has a local minimum at $q_0$, by inverting the one-parameter subgroup if necessary. Analogously, near $q_\infty$, let $\rho_\infty : S^1 \to T^2$ be the one-parameter subgroup that rotates the base and fixes the fiber $p_i^{-1}(q_\infty)$. It has a local moment map $m_\infty = h_\infty\circ p_i$, where we assume that $h_\infty$ has a local maximum at $q_\infty$.

Now we use crucially the fact that $D_i$ is a two-sphere and there are only two nodal points on it. We claim that there is a constant $c \in \R$ and a function $h : D_i \to \R$ such that $h = h_0$ near $q_0$ and $h = h_\infty + c$ near $q_\infty$, and \emph{which has no other critical points}. That is to say, there is a perfect Morse function on the two-sphere with minimum at $q_0$, maximum at $q_\infty$, and prescribed differential near these points. This is elementary. 

Now consider the function $m = h \circ p_i : V_i \to \R$, and the Hamiltonian vector field $X_m$. There is no reason for $X_m$ to generate a circle action, but we will show that it does generate an $\R$ action on $V_i$. Let $f$ be the Hamiltonian generating the $S^1$ action that rotates the fibers of $p_i : V_i \to D_i$. We claim that $m$ and $f$ Poisson commute. Indeed, we have
\begin{equation}
  \{f,m\} = \omega(X_f,X_m) = dm(X_f) = dh(dp_i(X_f)) = 0
\end{equation}
where $dp_i(X_f) = 0$ because the $S^1$ action preserves the fibers, and so $X_f$ is tangent to the fibers. Since $m$ commutes with $f$, $X_m$ is tangent to the level sets of $f$; since these level sets are compact (they are circle bundles over $D_i$, together with the critical level that is $D_i$ itself), the flow of $X_m$ is complete. Thus $X_m$ generates an $\R$ action on $V_i$.

Thus the pair $(m,f)$ is a maximal collection of Poisson commuting functions on $V_i$, generating an $\R \times S^1$ action. Our $T^2$ action will come from an application of the Arnold-Liouville theorem, so let us study the orbit structure of this action. The orbits are contained in the level sets of the map $(m,f) : V_i \to \R^2$. The zero-dimensional level sets consist of the nodal points $\{q_0\}$ and $\{q_\infty\}$, where both $df$ and $dm$ vanish; these are the zero-dimensional orbits. The one-dimensional level sets are of two kinds: some are the regular level sets of $h$ sitting in $D_i$, where $df = 0$ and $dm \neq 0$, and there are also the $S^1$ orbits in the fibers $p_i^{-1}(q_0)$ and $p_i^{-1}(q_\infty)$, where  $dm = 0$ and $df \neq 0$; these are the one-dimensional orbits. All other level sets are two-dimensional compact tori; indeed, they are the intersection of a regular level of $f$, which is a circle bundle over a the two-sphere, with a set of the form $p_i^{-1}(h^{-1}(C))$, where $h^{-1}(C)$ is some circle on the two-sphere. At a point on such a level set, $X_f$ and $X_m$ are linearly independent, since $X_f$ is tangent to the fiber of $p_i$ and $X_m$ is symplectically orthogonal to it. Thus these level sets are two-dimensional orbits. 

Now we apply the Arnold-Liouville theorem \cite{arnold}. This yields a set of action coordinates $(f_1,f_2) : V_i \to \R^2$ that when taken as Hamiltonians generate a $T^2$ action having the same orbits as the pair $(m,f)$. The action coordinates are uniquely determined up to integral affine transformations. It is possible to arrange that our original $f$ is one of the action coordinates, which shows that the $S^1$ action constructed previously agrees with a one-parameter subgroup of this $T^2$ action. Near the nodal points, the pair $(m,f)$ that generates the previously constructed $T^2$ action is a system of action coordinates, and so they are related to $(f_1,f_2)$ by a integral affine transformation (it will not necessarily be the same transformation at the two nodal points). Thus the $T^2$ actions agree at the nodal points as well.
\end{proof}

Let us summarize the outcome of this construction. In a neighborhood of each $D_i$, there is a Hamiltonian $T^2$ action. At the nodes, where two such neighborhoods overlap, the $T^2$ actions agree in the sense that there is an automorphism of $T^2$ that intertwines them, and moment maps of the two $T^2$ actions are related by an integral affine transformation. The orbits of all the $T^2$ actions are compact isotropic submanifolds; each is either a node of $D$, a circle on some component of $D$, or else a Lagrangian torus. In particular, a neighborhood of $D$, minus $D$ itself, is fibered by Lagrangian tori, and these tori are fibers of the local moment maps. 

Observe at this point that, since the torus fibers near $D$ are Lagrangian, the restriction of $\lambda$ to those fibers is closed. This means that each such torus $L$ has a well-defined \emph{Liouville class} $[\lambda|_L] \in H^1(L;\R)$; we shall study these classes later on.



\begin{figure}
  \centering
  \includegraphics[width=3in]{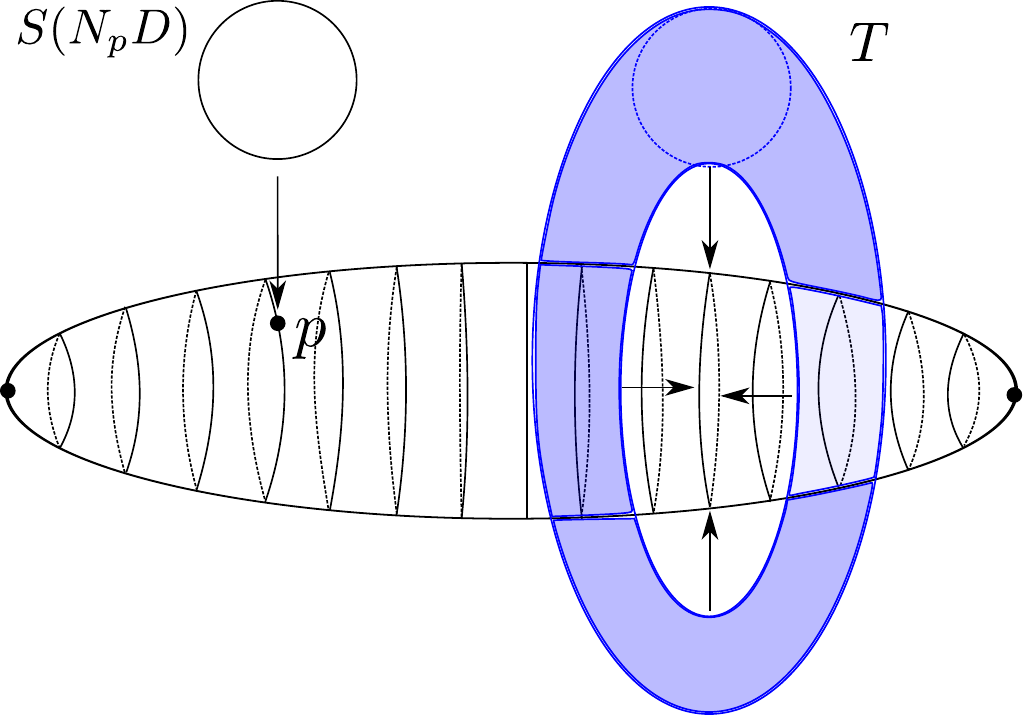}
  \caption{Tori in the normal bundle to a rational curve $D_i$. At left, a circle fiber of the normal bundle projecting to a point on $D_i$. At right, a torus projecting to a circle on $D_i$.} 
  \label{fig:edge-tori}
\end{figure}

\subsubsection{Step 2: making the Liouville form symmetric near the crossings}
\label{sec:step2}
This step is identical to \cite[\S 4, Step 2]{seidel-biased} so we will be brief. The outcome is that there is a smooth function $k$ on $Y$ such that $\lambda' = \lambda + dk$ is $T^2$--invariant near each of the crossings, and that the sub-level set $\overline{U} = \phi^{-1}(\infty,C]$ (for a large regular value $C$) equipped with the restriction of $\lambda'$ is still a Liouville domain. This Liouville structure is deformation equivalent to the original structure.

\subsubsection{Step 3: making the boundary torus-invariant}
\label{sec:step3}

The goal is to construct an exhausting family of Liouville domains $\overline{U}' \subset U$ such that the boundary $\partial\overline{U}'$ is invariant under the local torus actions. This condition tells us what to do: in a portion of the neighborhood of the divisor where the moment map is defined, let $S$ be a path that goes very close to the boundary of the moment map image. More precisely, we take the boundary of the moment map image, push it off into the interior of this image, and then round the corners in the simplest way. See Figure \ref{fig:node-boundary}. Then let $\Sigma$ be the union of the torus orbits over this path. By looking in several charts we may define $\Sigma$ as a closed real three dimensional manifold contained in a neighborhood of the divisor, which has the topology of a $T^2$--bundle over the circle $S$.

\begin{figure}
  \centering
  \includegraphics[width=2in]{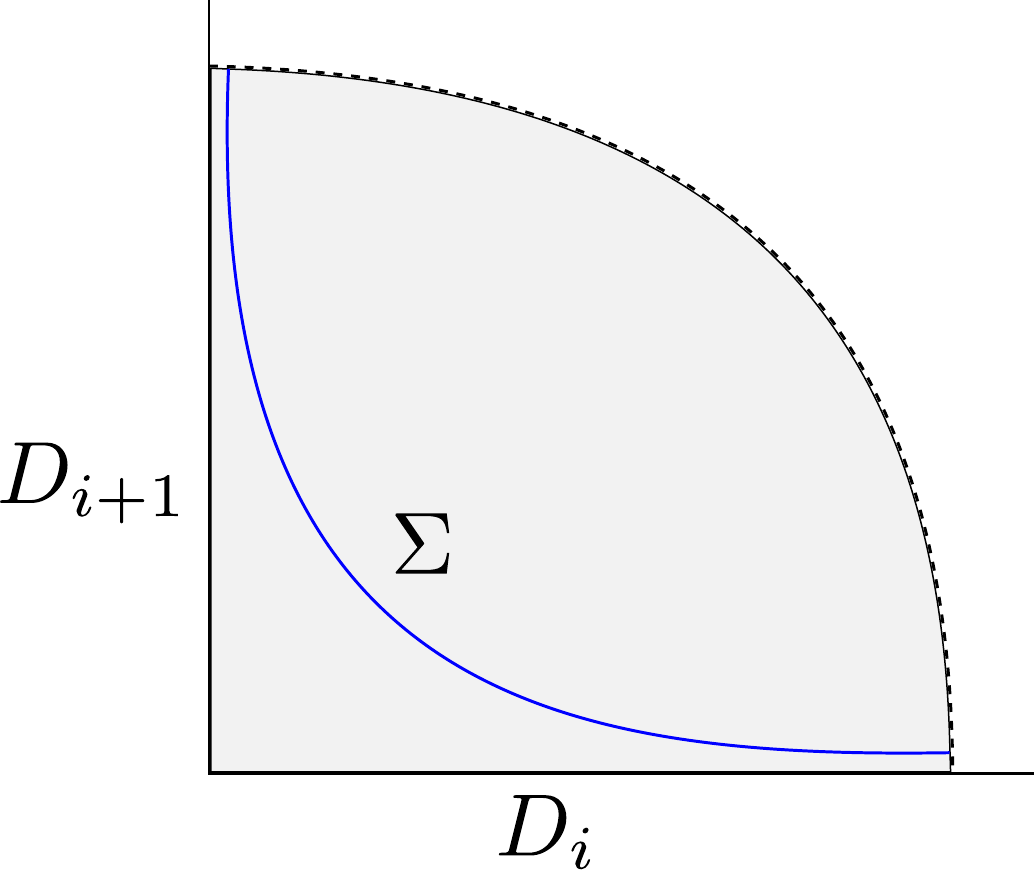}
  \caption{Moment map picture for the contact hypersurface near a node.}
  \label{fig:node-boundary}
\end{figure}

We take $\overline{U}'$ to be the inside of the real hypersurface $\Sigma$, namely the side not containing $D$. The arguments of \cite[\S 4, Step 3]{seidel-biased} apply to this hypersurface $\Sigma$ to show that if the path $S$ is taken close enough to the boundary, the Liouville vector field $Z'$ from Step 2 points outward along $\Sigma$. We also find that $(\overline{U},\lambda'|_{\overline{U}})$ and $(\overline{U}',\lambda'|_{\overline{U}'})$ are Liouville isomorphic.

\subsubsection{Step 4: making the contact form torus-invariant}
\label{sec:step4}

This is the same as in \cite{seidel-biased} but we use the $T^2$--action instead of just the $S^1$--action. Near the crossings, $\lambda'$ is already $T^2$--invariant, as is $\Sigma$, so $\lambda'|_\Sigma$ is a $T^2$--invariant contact form. Away from the crossings, we find that $Z'$ points outwards all along the torus fibers of $\Sigma$, so by averaging over the local $T^2$--actions we obtain a Liouville form $\lambda''$ defined in a neighborhood of $\Sigma$ that is $T^2$--invariant, and whose dual vector field $Z''$ points outwards along $\Sigma$. (Even though the $T^2$-actions are only locally defined, they are compatible; if we average in one region, and then another region, the second averaging does not destroy invariance in the first region.) The desired contact form is $\alpha = \lambda''|_\Sigma$. The contact structure is isomorphic to the one induced by $\lambda'$, and hence, just as in \cite[\S 4, Step 4]{seidel-biased}, we can find a Liouville structure on $\overline{U}'$ isomorphic to the one induced by $\lambda'$ and such that $\partial\overline{U}'$ has the contact form $\alpha$.

Let us now quote the following explicit description of the Reeb dynamics of $(\partial{\overline{U}'},\alpha)$ from \cite[\S 4, Step 4]{seidel-biased}, which will be used crucially in the paper: Over the parts of the boundary lying close to the smooth points of $D$, the Reeb flow is a circle action. Near the nodes, the local model is $\R \times T^2$, and the Reeb flow translates the torus $\{s\} \times T^2$ with some speed $\xi(s) = (\xi_1(s),\xi_2(s)) \in \R^2_+$. We have $\xi(s) = (\xi_+,0)$ for large $s \geq S$ and $\xi(s) = (0,\xi_-)$ for $s \leq -S$, so that the Reeb flow matches with the circle actions over the smooth parts of $D$. The function $\xi(s)$ will depend on the choice of boundary we made in Step 3, but by a suitable choice we can ensure that for $-S < s < S$, we have $\partial_s \xi_1 > 0$ and $\partial_s \xi_2 < 0$. 

\subsection{Reeb dynamics and Liouville classes}
\label{sec:reeb-dynamics}

Now we investigate further the contact boundary $(\Sigma,\alpha)$ that is the output of Step 4. Recall that $\Sigma$ is a torus bundle over a circle $S$.
\begin{equation}
\xymatrix{
  T^2 \ar[r] & \Sigma \ar[d]^\pi\\
  & S
}
\end{equation}
and the contact form $\alpha$ is invariant under the local $T^2$--action. Note that $\Sigma$ is not a principal $T^2$--bundle since there is no global $T^2$--action, and indeed, the structural group of the fiber bundle is not $T^2$--translations but rather diffeomorphisms of the torus. 

Let $I \subset S$ be some interval, and consider $\Sigma|_I = \pi^{-1}(I)$. Then $\Sigma|_I$ has a $T^2$ action, and by choosing a section $\sigma|_I : I \to \Sigma|_I$ of the fibration, we obtain an equivariant diffeomorphism
\begin{equation}
  \Sigma|_I \cong T^2 \times I
\end{equation}
Introduce coordinates $(\theta_1,\theta_2,s)$ on $T^2\times I$, where $(\theta_1,\theta_2)$ are $2\pi$--periodic coordinates on the fiber, and $s \in I$ is a coordinate on the base. Since the contact form $\alpha$ is a $T^2$--invariant one--form, in this coordinate system it can be written
\begin{equation}
  \label{eq:invariant-contact-form}
  \alpha = f(s)\,d\theta_1 + g(s)\,d\theta_2 + h(s)\,ds
\end{equation}
for some functions $f,g,h$ that depend on $s$ but not on the angular coordinates. We compute
\begin{equation}
  \label{eq:dalpha}
  d\alpha = f'(s)\,ds\wedge d\theta_1 + g'(s)\,ds\wedge d\theta_2 = ds \wedge \frac{\partial \alpha}{\partial s}
\end{equation}
Where the partial derivative $\partial \alpha/\partial s$ is taken with respect to the coframe $(d\theta_1,d\theta_2,ds)$. The volume form is
\begin{equation}
  \label{eq:volume-form}
  \alpha \wedge d\alpha = \alpha \wedge ds \wedge \frac{\partial \alpha}{\partial s} = -ds\wedge \alpha \wedge \frac{\partial \alpha}{\partial s}
  = -\begin{vmatrix} f(s) & g(s)\\ f'(s) & g'(s)\end{vmatrix} ds\wedge d\theta_1 \wedge d\theta_2
\end{equation}

\subsubsection{An orientation convention}
\label{sec:orientation-convention}

The orientation on $\Sigma$ is induced from the filling $\overline{U}'$ by the ``outward normal first'' convention, which is the same as the orientation induced by the volume form $\alpha \wedge d\alpha$. Since we have a fibration structure where the total space is canonically oriented, an orientation of the base is equivalent to an orientation of the fiber. The convention is that for $F \to E \to B$ a fibration, $\Lambda^{\text{max}}T^*E \cong \Lambda^{\text{max}}T^*B \otimes \Lambda^{\text{max}}T^*F$. Assuming that the base is oriented by the form $ds$, we see that the fiber is oriented by the volume form
\begin{equation}
  -\begin{vmatrix} f(s) & g(s)\\ f'(s) & g'(s)\end{vmatrix} d\theta_1 \wedge d\theta_2
\end{equation}
By switching the roles of $f,g$ and $\theta_1,\theta_2$ if necessary, we may assume that the determinant is negative, so that $d\theta_1\wedge d\theta_2$ is a positive volume form. A change in the orientation of the base also leads to a switch of this form.

\subsubsection{The Reeb vector field}
\label{sec:reeb-vf}

The Reeb vector field $R$ is determined by the conditions $\iota_R d\alpha = 0$ and $\alpha(R) = 1$. Expanding $R$ in the coordinate frame,
\begin{equation}
  R = R_1\partial_{\theta_1} + R_2\partial_{\theta_2} + R_s \partial_s
\end{equation}
We find that
\begin{equation}
  \label{eq:reeb-orth}
\begin{split}
  R_s &= 0\\
  f'R_1 + g'R_2 &= 0\\
  fR_1 + gR_2 & = 1
\end{split}
\end{equation}
Observe that $R$ is vertical, that is, tangent to the fibers of $\pi$. Since the Reeb vector field spans the characteristic foliation on $\Sigma$, we see that this foliation is tangent to the torus fibers, and on each torus fiber it consists of lines of some (rational or irrational) slope. The slope of this foliation varies as the fiber moves. See Figure \ref{fig:characteristic-foliation}.

\begin{figure}
  \centering
  \includegraphics[width=4in]{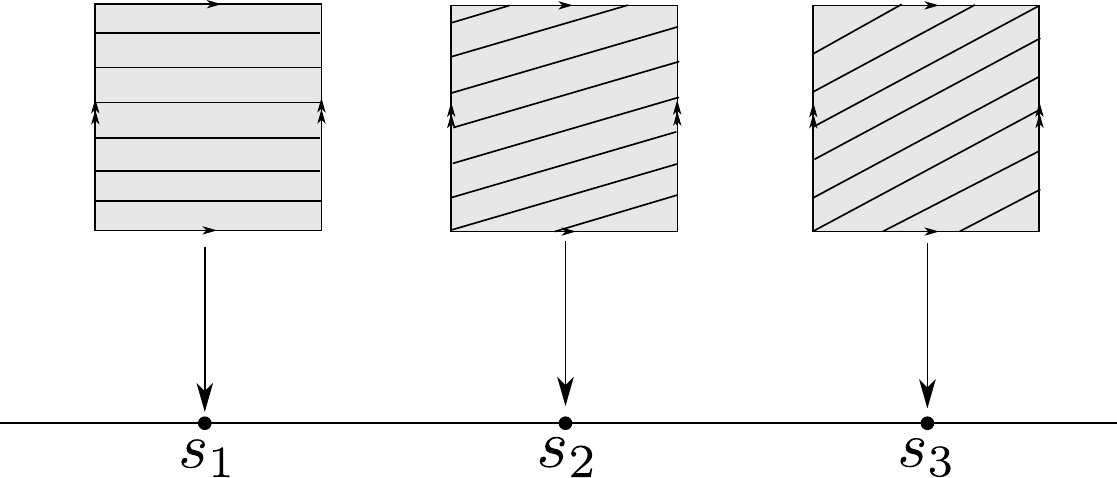}
  \caption{The characteristic foliation on various torus fibers.}
  \label{fig:characteristic-foliation}
\end{figure}

\subsubsection{The Liouville class}
\label{sec:liouville-class}

Now consider the torus fibers $T^2\times \{s\}$. By equation \eqref{eq:dalpha} we see that $d\alpha|_{T^2\times\{s\}} = 0$, that is, each torus is pre-Lagrangian. Thus we have the Liouville class
\begin{equation}
  \label{eq:liouville-class}
  A(s) = [\alpha|_{T^2\times\{s\}}] = f(s)[d\theta_1]+g(s)[d\theta_2] \in H^1(T^2\times\{s\},\R)
\end{equation}
The cohomology--valued function $A$ may be regarded as a section of the rank two vector bundle
\begin{equation}
  R^1\pi_*(\underline{\R}) \to S
\end{equation}
whose fibers are the first cohomology groups of the torus fibers $T^2 \times \{s\}$.
This vector bundle has a flat Gauss--Manin connection $\nabla$, defined so that the parallel transport coincides with transporting cohomology classes from fiber to fiber using the local product structure. Denote the monodromy of this connection by $\mu \in \SL_2(\Z)$. In particular, the expression $\nabla A$ defines a one-form on $S$ with values in this bundle. The cup product on the fibers then yields an element
\begin{equation}
A \cup \nabla A \in \Omega^1(S, R^2\pi_*(\underline{\R}))  
\end{equation}
Here $R^2\pi_*(\underline{\R})$ is a bundle whose fiber at $s$ is $H^2(T^2\times\{s\},\R) \cong \R$, and this vector bundle is trivializable using the orientation convention discussed above.

\subsubsection{Legendrian sections and elimination of $h(s)$}
\label{sec:legendrian-sections}

We observe that the condition that $\alpha$ is contact is equivalent to the non-vanishing of the determinant in the last expression of equation \eqref{eq:volume-form}. Since this determinant does not involve $h(s)$, we may change it arbitrarily while preserving the contact condition, and, by Gray's theorem, the isomorphism class of the contact structure.  The following lemma will be used to eliminate the $h(s)\,ds$ term by a change of coordinates.

\begin{lemma}
  \label{lem:legendrian-section}
  The fibration $\pi: \Sigma \to S$ admits a section $\sigma: S \to \Sigma$ whose image is a Legendrian circle in $\Sigma$.
\end{lemma}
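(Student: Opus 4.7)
My plan is to reduce the lemma to a first-order linear ODE that can be solved explicitly using the freedom afforded by having two unknown functions for a single equation. In a local trivialization $\Sigma|_I\cong T^2\times I$ in which the contact form takes the form \eqref{eq:invariant-contact-form}, a section $s\mapsto(\theta_1(s),\theta_2(s),s)$ is Legendrian exactly when
\begin{equation*}
f(s)\theta_1'(s)+g(s)\theta_2'(s)+h(s)=0.
\end{equation*}
The contact condition forces the pair $(f,g)$ to be nowhere zero (otherwise the determinant in \eqref{eq:volume-form} would vanish), so the pointwise solution set is a one-dimensional affine subspace and local solutions abound.

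To assemble a global section, I would first pick any smooth topological section $\sigma_0:S\to\Sigma$, which exists because the monodromy $\mu\in\SL_2(\Z)$ of the torus bundle always fixes the origin of $T^2$; the constant section at any fixed point of $\mu$ serves. Writing $\sigma_0^*\alpha=\psi_0(s)\,ds$, a vertical perturbation $\tau(s)=(U(s),V(s))$ produces a new section $\sigma_0+\tau$ whose pullback differs from $\sigma_0^*\alpha$ by $(fU'+gV')\,ds$, so the problem reduces to solving
\begin{equation*}
fU'+gV'=-\psi_0
\end{equation*}
for $(U,V)$ obeying the appropriate periodicity compatible with the monodromy.

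Parametrize the pointwise solutions as $(U',V')=-\psi_0(f^2+g^2)^{-1}(f,g)+\lambda(s)(g,-f)$ for a free function $\lambda\in C^\infty(S)$. The closure conditions $\oint U'\,ds=\oint V'\,ds=0$ translate, after absorbing any allowable $2\pi\Z^2$-translates, into two linear conditions $\oint\lambda g\,ds=c_1$ and $\oint\lambda f\,ds=c_2$ for constants $c_1,c_2$ determined by $f,g,h$. Since the Liouville class $A(s)=f(s)[d\theta_1]+g(s)[d\theta_2]$ has a direction that rotates as the fiber varies (Figure \ref{fig:characteristic-foliation}), the functions $f$ and $g$ are linearly independent in $L^2(S)$, so the linear functional $\lambda\mapsto(\oint\lambda g\,ds,\oint\lambda f\,ds)$ is surjective onto $\R^2$. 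Choosing $\lambda$ accordingly solves the ODE and integrates to the desired Legendrian section.

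The main obstacle I anticipate is handling the monodromy $\mu$ when it is nontrivial: the coordinates $(\theta_1,\theta_2)$ do not close up globally, and the above integrals must be read over a fundamental domain with quasi-periodic data. I would circumvent this by choosing $\sigma_0$ constant at a fixed point of $\mu$, so that the perturbation $(U,V)$ needs only to return to a $2\pi\Z^2$-translate of its initial value; any shift of the required target $(c_1,c_2)$ by a lattice vector then corresponds to a different winding class of the section, and the surjectivity argument above still applies.
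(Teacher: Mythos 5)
Your approach is correct in essence, but it is a genuinely different route from the paper's. You treat the Legendrian-section condition as an underdetermined linear ODE along $S$: in the invariant trivialization the condition is $\langle A(s),\tau'(s)\rangle = -\psi_0(s)$ for a vertical perturbation $\tau$ of a reference section, and you exploit the one-parameter pointwise freedom (the direction $\ker\langle A(s),\cdot\rangle$) together with the linear independence of $f$ and $g$ — which indeed follows from the contact condition $A\wedge A'\neq 0$ — to kill the period obstruction. The paper instead deforms the contact form, $\alpha_t=\alpha-t\,\pi^*[\alpha(T\sigma_0(\partial_s))\,ds]$, so that the chosen section is Legendrian for $\alpha_1$, and then applies Gray stability, checking via the Moser vector field that the isotopy is vertical and equivariant for the local $T^2$--actions, hence carries sections to sections. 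Your method is more elementary and explicit (no Gray/Moser input); the paper's method buys freedom from all periodicity and monodromy bookkeeping, since the Gray isotopy is global on $\Sigma$, and the verticality/equivariance check is exactly what is needed later anyway.

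Two points in your write-up need repair, though neither is fatal. First, the existence of $\sigma_0$ should not be argued via a fixed point of $\mu$: the structural group of $\pi$ is diffeomorphisms of $T^2$, not linear maps, and even an affine monodromy need not have a fixed point; a smooth section exists simply because the base is a circle and the fiber is connected. Second, your closure condition is not quite the right one when $\mu\neq\mathrm{id}$: cutting $S$ at a point, a perturbation $\tau$ glues to a section precisely when $\tau(0)\equiv\mu\,\tau(1)$ modulo $2\pi\Z^2$, and smoothness at the cut requires the integrand defining $\tau'$ to be monodromy-equivariant. Your particular solution $-\psi_0(f^2+g^2)^{-1}(f,g)$ uses a Euclidean normalization that is not $\SL_2(\Z)$--equivariant, so as written the glued section can have a corner at the basepoint. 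The fix is easy: replace it by the invariantly defined vector $v_\Reeb(s)$ (normalized by $\langle A(s),v_\Reeb(s)\rangle=1$), note that $(g,-f)$ is the $\omega_0$--dual of $A$ and hence also equivariant, and then impose $\tau(0)=\tau(1)=0$ exactly; the surjectivity argument with $\lambda$ still applies, and the monodromy then causes no trouble since $\mu$ fixes $0$ and preserves the lattice. With these adjustments your proof goes through.
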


\begin{proof}
  Choose a global smooth section $\sigma_0 : S\to \Sigma$ (it exists because the base is a circle and the fiber is connected). Letting $s$ denote a coordinate on $S$ and $\partial_s$ the coordinate vector field, consider $\alpha(T\sigma_0(\partial_s))$, which is a function on $S$. Then
\begin{equation}
\label{eq:alpha-forms}
  \alpha_t = \alpha - t\cdot\pi^*(\alpha(T\sigma_0(\partial_s))\,ds)
\end{equation}
is a family of contact forms starting from $\alpha_0 = \alpha$ and such that $\sigma_0(S)$ is a Legendrian circle for $\alpha_1$. Denote the corresponding contact structures by $\xi_t$. By Gray's theorem, there is an isotopy $\psi_t: \Sigma \to \Sigma$ such that $T\psi_t(\xi_0) = \xi_t$. Thus $\psi_1^{-1}(\sigma_0(S))$ is a Legendrian circle for the contact form $\alpha_0$.

To see that this circle is still a section of $\pi$, we look closer at the application of Moser's trick that is used to construct the isotopy \cite[p.~60]{geiges-contact-topology}. The isotopy $\psi_t$ is the flow of the time--dependent vector field $X_t \in \xi_t$ which is chosen to satisfy
\begin{equation}
  \dot{\alpha}_t + \iota_{X_t}d\alpha_t = \mu_t \alpha_t
\end{equation}
where
\begin{equation}
  \mu_t = \dot{\alpha}_t(R_t)
\end{equation}
Since $d\alpha_t$ is independent of $t$, we find that all the Reeb vector fields $R_t$ are vertical, and since the deformation term is pulled back from the base, they are actually all the same. Since $\dot{\alpha}_t$ is pulled back from $S$ and $R_t = R$ is vertical, we see that $\mu_t \equiv 0$. Equation \eqref{eq:alpha-forms} becomes
\begin{equation}
 -  \pi^*(\alpha(T\sigma_0(\partial_s))\,ds) + \iota_{X_t}d\alpha = 0
\end{equation}
Due to the specific form of $d\alpha$ in equation \eqref{eq:dalpha}, we see that $X_t$ is vertical, and is invariant under the local $T^2$--actions at all times. We conclude that the isotopy $\psi_t$ acts vertically (preserving the fibers) and equivariantly for the local $T^2$--actions. Thus $\sigma = \psi_1^{-1} \circ \sigma_0$ is a Legendrian section.
\end{proof}

From now on we will pick some Legendrian section $\sigma: S \to \Sigma$. Restricting $\sigma$ to some interval $I \subset S$, we obtain an equivariant diffeomorphism $\Sigma|_I \to T^2 \times I$ such that $\alpha$ has the form
\begin{equation}
  \alpha = f(s)\,d\theta_1 + g(s)\,d\theta_2
\end{equation}
The $h(s)\,ds$ term from equation \eqref{eq:invariant-contact-form} is not present since the lines $\{(\theta_1,\theta_2)\}\times I$ are now Legendrian, and so $\alpha(\partial_s)=0$. We now observe that such a contact form $\alpha$ is entirely determined by the Liouville class $A(s)$ \eqref{eq:liouville-class}.

\subsubsection{Properties of the Liouville class}
\label{sec:properties-liouville-class}

Observe that for a (pre-)Lagrangian torus $L \subset \Sigma \subset \overline{U}'$, we have
\begin{equation}
  [\alpha|_L] = [\lambda''|_L] = [(\iota(Z'')\omega)|_L]
\end{equation}
where $\lambda'', Z''$ are from the Liouville structure that is the output of Step 4. The properties of the Liouville class we need to use are just a translation of the properties of $Z''$ that are ensured by the construction in section \ref{sec:refinements}. However, the Liouville class is easier to compute since, being a cohomology--level object, it is more stable under deformations.

\begin{proposition}
\label{prop:stability-liouville-class}
Let $L \subset \Sigma \subset \overline{U}'$ be a Lagrangian torus near the boundary divisor $D$ which is an orbit of the local torus actions, as constructed in Step 1. Then the Liouville class $[\lambda''|_L]$ for the Liouville structure coming from Step 4 is equal to the Liouville class $[\lambda|_L]$ for the Liouville structure after Step 1.
\end{proposition}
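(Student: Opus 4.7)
The plan is to trace the Liouville one-form through Steps 2, 3, and 4 and show that at each stage the de Rham cohomology class of its restriction to $L$ is preserved. The three relevant primitives are $\lambda$ (from Step 1), $\lambda' = \lambda + dk$ (Step 2), and $\lambda''$ (Step 4). Throughout, the underlying symplectic form $\omega = d\lambda = d\lambda'$ is unchanged, so $L$, which is a fiber of the Lagrangian torus fibration produced by the local $T^2$-action in Step 1, remains Lagrangian; in particular $\lambda'|_L$ is closed because $d(\lambda'|_L) = \omega|_L = 0$. Step 2 simply adds the exact form $dk$ for a smooth function $k$ on $Y$, so $\lambda'|_L - \lambda|_L = d(k|_L)$ is exact and $[\lambda'|_L] = [\lambda|_L]$. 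Step 3 does not alter the one-form, only selecting a smaller Liouville sub-domain $\overline{U}' \subset \overline{U}$, so nothing needs to be checked there.

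The real content lies in Step 4, where $\lambda''$ is obtained from $\lambda'$ by averaging over the local $T^2$-action on a neighborhood of $\Sigma$,
$$\lambda'' = \int_{T^2} g^*\lambda'\,d\mu(g),$$
with $d\mu$ the normalized Haar measure. Since $L$ is itself a $T^2$-orbit, the action restricts to a transitive action on $L$, and restriction commutes with averaging:
$$\lambda''|_L = \int_{T^2} g^*(\lambda'|_L)\,d\mu(g).$$
For each $g \in T^2$, connectedness provides a path $g_t$ from $e$ to $g$ with generator $V_t$; Cartan's magic formula together with the closedness of $\lambda'|_L$ gives
$$g^*(\lambda'|_L) - \lambda'|_L = d\eta_g, \qquad \eta_g = \int_0^1 g_t^*\bigl(\iota_{V_t}(\lambda'|_L)\bigr)\,dt.$$
Integrating over $T^2$ and applying Fubini exhibits $\lambda''|_L - \lambda'|_L$ as an exact form, whence $[\lambda''|_L] = [\lambda'|_L]$. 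Combined with the previous step this yields $[\lambda''|_L] = [\lambda|_L]$, as required.

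The only (mild) obstacle is the cohomology-invariance of the $T^2$-averaging operation; it is a standard consequence of the fact that a connected Lie group acts trivially on the de Rham cohomology of any manifold on which it acts, but one must ensure the form being averaged is already closed, which is exactly the Lagrangian hypothesis on $L$. Everything else is bookkeeping: the proposition is essentially the statement that the Liouville class, being a cohomological invariant, is insensitive to the specific modifications (adding an exact form, averaging over a compact connected group) used to promote the basic Liouville structure to one with a $T^2$-invariant contact boundary.
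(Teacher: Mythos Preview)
Your proof is correct and follows essentially the same route as the paper: trace through Steps 2, 3, 4, noting that Step 2 adds an exact form, Step 3 changes nothing, and Step 4 averages cohomologous forms over the connected group $T^2$. The only difference is cosmetic: where the paper simply says the forms $(t_1,t_2)^*\lambda'|_L$ are cohomologous because they are isotopic, you spell this out via Cartan's magic formula and integrate the resulting primitives.
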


\begin{proof}
  Step 2 does not alter the Liouville class because it changes the one-form by a globally exact form:
  \begin{equation}
    \lambda' = \lambda + dk
  \end{equation}
  Step 3 involves no change in the Liouville structure. Step 4 changes the Liouville structure by averaging $\lambda'$ over the local torus actions. Since $L$ is assumed to be an orbit of the torus action, we find
  \begin{equation}
    \lambda'' = \int_{T^2} \left((t_1,t_2)^*\lambda'\right)\,\frac{dt_1\,dt_2}{(2\pi)^2}
  \end{equation}
  but all the forms $(t_1,t_2)^*\lambda'$ in the integrand are cohomologous (since they are isotopic closed forms, this follows from the Cartan homotopy formula). So at the cohomology level we are just averaging a constant function.
\end{proof}

Because of the stability expressed in proposition \ref{prop:stability-liouville-class}, we can compute the Liouville classes of our tori using the Liouville structure coming from Step 1. Recall that this Liouville structure comes from K\"{a}hler geometry: $D = s^{-1}(0)$ for some section $s$ of the ample line bundle $\cL$, we have chosen $\Vert\cdot \Vert$ an Hermitian metric on $\cL$ such that $\phi = -\log \Vert s \Vert$ is the K\"{a}hler potential, and in Step 1 we ensured that the K\"{a}hler form is standard near the crossings.

Now we describe this in an analytic coordinate chart $V \subset Y$, possibly containing part of the divisor $D$. Over $V$ we also choose a holomorphic trivialization $\cL|_V \cong \cO|_V$. With respect to this trivialization, the holomorphic section $s$ becomes a holomorphic function that we also denote by $s$. The Hermitian metric $\Vert \cdot \Vert$ is, at each point, a positive multiple of the absolute value norm on $\cO$, so $\Vert s \Vert = e^\psi |s|$ for some function $\psi : V \to \R$. Thus
\begin{equation}
  \phi = -\log \Vert s \Vert = -\log e^\psi|s| = -\psi -\log|s|
\end{equation}
Now since $s$ is holomorphic and vanishes on $D$, we find that $-\log|s|$ is a function which is
discontinuous along $D$, and $dd^c(-\log|s|) = 0$ on the complement of $D$ (that is, $-\log|s|$ is pluriharmonic outside of $D$; this follows from Equation \eqref{eq:kahler-potential} and the fact that the trivial bundle is flat). If we interpret the formula in terms of weak derivatives, $dd^c(-\log|s|)$ is a current supported along $D$ (the Poincar\'{e}--Lelong formula). Thus, on the complement of $D$, $-\psi$ is another K\"{a}hler potential for the same form.

However, the term $-\log|s|$ contributes greatly to the Liouville one-form $\lambda = d^c \phi$. Consider the case near a crossing. We may choose the holomorphic coordinate chart $V$ and the holomorphic trivialization of $\cL$ so that $s = z_1^{a_1}z_2^{a_2}$ in local coordinates. If $z_j =\exp(\rho_j + i\theta_j)$,
\begin{equation}
  d^c(\log|z_j|) = d\theta_j
\end{equation}
Thus
\begin{equation}
  \lambda = d^c\phi = -d^c\psi - a_1d\theta_1 - a_2d\theta_2
\end{equation}
The term $-d^c\psi$ extends continuously across the divisor $D$, while the other terms do not. Thus we may write, where $z_j = x_j + iy_j$,
\begin{equation}
  -d^c\psi = \beta_1\,dx_1+\beta_2\,dy_1+\beta_3\,dx_2+\beta_4\,dy_2
\end{equation}
where the coefficients $\beta_1,\beta_2,\beta_3,\beta_4$ are bounded continuous functions on the chart $V$. 

Now suppose that $i: T^2 \to V$ is the embedding of a crossing torus $\{|z_1| = \epsilon_1, |z_2| = \epsilon_2\}$.
\begin{equation}
  i(\theta_1,\theta_2) = (\epsilon_1 \cos\theta_1,\epsilon_1\sin\theta_1,\epsilon_2\cos\theta_2,\epsilon_2\sin\theta_2)
\end{equation}
Then clearly $i^*dx_j$ and $i^*dy_j$ are both in the class $O(\epsilon_j)\,d\theta_j$ (big--O notation). Thus
\begin{equation}
  i^*(-d^c\phi) = O(\epsilon_1)\,d\theta_1 + O(\epsilon_2)\,d\theta_2
\end{equation}
and the Liouville class is
\begin{equation}
\label{eq:liouville-node-estimate}
  [\lambda|_{T^2}] = \left(-a_1+O(\epsilon_1)\right)[d\theta_1] + \left(-a_2+O(\epsilon_2)\right)[d\theta_2]
\end{equation}

A similar analysis works along the smooth parts of the divisors. The smooth part of each divisor is complex analytically a $\C^\times$, and we can take an analytic coordinate $z_B$ there ($B$ stands for base). We restrict $z_B$ to lie in some large annulus $A$ in order to avoid going right up to the nodes. The normal bundle of $D$ restricted to $A$ is then holomorphically trivial, and we let $z_F$ be a coordinate on the fibers ($F$ stands for fiber). We may write $s = z_F^a$.
\begin{equation}
  \lambda = d^c(-\log\Vert s\Vert) = -d^c\psi - d^c(\log|z_F^a|) = -d^c\psi - a\,d\theta_F
\end{equation}
Where $d\theta_F$ is the angular one-form on the fibers of the normal bundle. The tori in question are not necessarily standard in these coordinates, but if we take a torus $i: T^2 \to V$ which is within distance $\epsilon$ of the divisor, we have
\begin{equation}
\label{eq:liouville-divisor-estimate}
  [\lambda|_{T^2}] = (-a+O(\epsilon))[d\theta_F] + \beta[d\theta_B]
\end{equation}
where we do not assume any control over the function $\beta$.

\subsubsection{Step 5: making the Liouville class ``locally convex''}
\label{sec:step5}

Using the estimates \eqref{eq:liouville-node-estimate} and \eqref{eq:liouville-divisor-estimate}, we can plot the Liouville class $A$ in the first cohomology of the torus. The class $A$ is a section of the flat vector bundle $R^1\pi_*(\underline{\R}) \to S$ with monodromy $\mu \in \SL_2(\Z)$. Choose a basepoint $s_0 \in S$, and a parametrization $\tau: [0,1] \to S$ such that $\tau(0) = \tau(1) = s_0$. The flat connection trivializes the bundle pulled back to $[0,1]$, and identifies all the fibers of the bundle with a model $H^1(T^2,\R)$. With these identifications, the class $A(s) \in H^1(\pi^{-1}(s),\R)$ then becomes a path in $H^1(T^2,\R)$, which is such that
\begin{equation}
  A(\tau(1))= \mu A(\tau(0))
\end{equation}
The picture is of a path in $H^1(T^2,\R)$ such that the endpoint is the monodromy image of the starting point; see Figure \ref{fig:liouville-class}.
\begin{figure}
  \centering
  \includegraphics[width=2in]{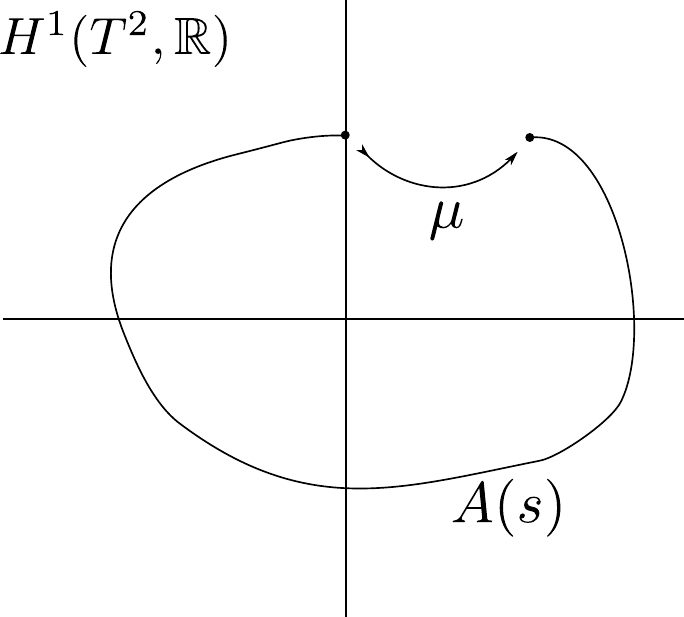}
  \caption{The Liouville class as a path in $H^1(T^2,\R)$.}
  \label{fig:liouville-class}
\end{figure}

The local model of the Reeb dynamics near the divisor from Step 4, together with the preceding analysis, implies the following properties of the Liouville class $A(s)$.

\begin{itemize}
\item The contact condition $\alpha \wedge d\alpha > 0$ becomes the condition that $A(s)$ as a path is always rotating clockwise with respect to the origin, once we orient everything as in section \ref{sec:orientation-convention} (see Equation \eqref{eq:volume-form}). This is equivalent to the condition $A(s) \wedge A'(s) < 0$.
\item Because the Reeb flow near the smooth parts of the divisors is a circle action, we find that the derivative $A'(s)$ must have constant direction in these regions, as the Reeb vector field is orthogonal to $A'(s)$ from \eqref{eq:reeb-orth}. Thus $A'(s) \wedge A''(s) = 0$ in these regions.
\item Recall that we have set things up in Step 4 so that the local model of the Reeb flow near the nodes is a translation by $\xi(s) = (\xi_1(s),\xi_1(s))$, where $\xi(s) = (\xi_+,0)$ for $s \geq S$, $\xi(s) = (0,\xi_-)$ for $s \leq -S$, and for $-S < s < S$, we have $\partial_s\xi_1 > 0$ and $\partial_s\xi_2 < 0$. Thus $A'(s)$ (with respect to the dual frame) equals $f(s)(-\xi_2(s),\xi_1(s))$, for some non-vanishing function $f(s)$, and we can easily compute that for $-S < s < S$,
  \begin{equation}
    A'(s) \wedge A''(s) = f^2(-\xi_2\partial_s\xi_1 - \xi_1(-\partial_s \xi_2)) < 0
  \end{equation}
\end{itemize}

The condition we wish to ensure is that this path is locally convex with respect to the origin.
\begin{definition}
  Let $\gamma: [0,1] \to \R^2$ be a path. Then $\gamma$ is locally convex with respect to the origin if sufficiently short secant lines of $\gamma$ lie closer to the origin than $\gamma$ does. For a $C^2$ path such that $\gamma\wedge \gamma' \neq 0$, this becomes the differential condition
  \begin{equation}
    \frac{\gamma'\wedge \gamma''}{\gamma \wedge \gamma'} > 0
  \end{equation}
\end{definition}

Since the path $A(s)$ has the property that $A\wedge A' < 0$, we just need to ensure that $A'\wedge A'' < 0$ as well. Another way to say this is that, in addition to $A(s)$ rotating clockwise, $A'(s)$ should always be turning to the right. According to the points above, we have that $A' \wedge A'' \leq 0$ every where. The only problem is that $A' \wedge A'' = 0$ on the parts of $\Sigma$ that are not near the nodes.

\begin{lemma}
  There is a conformal rescaling of $A$ (and hence of the contact form $\alpha$) such that $A' \wedge A'' < 0$.
\end{lemma}

\begin{proof}
Since $A' \wedge A'' \leq 0$, the only problem occurs in segments where the direction of $A'$ is constant. In these regions $A$ is a straight segment. The idea is to introduce a small ``bulge'' in the path $A$ such that $A' \wedge A'' < 0$. 
An explicit prescription is as follows. Let $[s_0,s_1]$ be a maximal interval of parameter values such that the direction of $A'(s)$ is a constant vector $v$ for $s \in [s_0,s_1]$. For small $\epsilon > 0$, $v_0 := A'(s_0-\epsilon)$ lies to the left of $v$, and $v_1:= A'(s_1+\epsilon)$ lies to the right of $v$. Then, on the interval $[s_0-\epsilon,s_1+\epsilon]$, we modify $A(s)$ so that the direction of $A'(s)$ slowly makes a small right turn from $v_0$ to $v_1$. Then we have $A' \wedge A'' < 0$.

\end{proof}

\begin{remark}
  We mention that contact structures on torus bundles with locally convex Liouville class were briefly considered in the work of Hutchings and Sullivan \cite[\S 12.2.2]{hutchings-sullivan}.
\end{remark}

This modification is effected by a conformal rescaling of the contact form $\alpha$ by a function that depends only on $s$, so it does not change the underlying contact structure or destroy the toroidal symmetry. Hence there is an isomorphic Liouville structure on $\overline{U}'$ whose boundary contact form is exactly the locally convex $\alpha$.

So as not to completely lose the reader with this somewhat strange definition, we put here the crucial lemma that this modification makes possible. 

\begin{lemma}
  \label{lem:convex-maximum-property}
  Let $\varrho \in H_1(\pi^{-1}(s_0),\Z)$ be a class of loops such that 
  \begin{equation}
    \int_{\varrho} A(s_0) > 0, \quad\int_{\varrho} A'(s_0) = 0.
  \end{equation}
  By parallel transport extend to $\varrho \in H_1(\pi^{-1}(s),\Z)$ for nearby fibers ($s$ near $s_0$). If $A(s)$ is locally convex, then the period integral
  \begin{equation}
    I(s) = \int_{\varrho} A(s)
  \end{equation}
has a non-degenerate local maximum at $s = s_0$.
\end{lemma}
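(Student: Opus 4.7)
\medskip

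\noindent\textbf{Proof plan.} The plan is to reduce the statement to a short two-dimensional linear-algebra computation involving $A(s_0)$, $A'(s_0)$, $A''(s_0)$ and the dual vector representing $\varrho$. Since parallel transport by the Gauss--Manin connection identifies the fibers of $R^1\pi_*(\underline{\R})$ near $s_0$ with a fixed $\R^2 \cong H^1(T^2,\R)$, the section $A(s)$ becomes a smooth curve in $\R^2$ while $\varrho$ becomes a fixed linear functional on $\R^2$. The period integral $I(s)=\int_{\varrho}A(s)$ is just the pairing $\varrho(A(s))$, so $I'(s)=\varrho(A'(s))$ and $I''(s)=\varrho(A''(s))$. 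In particular the hypothesis $\int_\varrho A'(s_0)=0$ immediately gives $I'(s_0)=0$, so $s_0$ is a critical point of $I$.

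The substance of the proof is the sign of $I''(s_0)$. Because $\varrho$ is a non-zero element of the one-dimensional annihilator of $A'(s_0)$ in $(\R^2)^*$, one can write $\varrho = \lambda \cdot (A'(s_0))^\perp$ for a unique scalar $\lambda \in \R$, where $v \mapsto v^\perp$ denotes the isomorphism $\R^2 \to (\R^2)^*$ sending $(a,b)$ to $b\,dx - a\,dy$, so that $v^\perp(w) = v \wedge w$ for the standard area form. Under this identification
\begin{equation}
  I(s_0) \;=\; \lambda\,\bigl(A'(s_0) \wedge A(s_0)\bigr) \;=\; -\lambda\,\bigl(A(s_0) \wedge A'(s_0)\bigr),
  \qquad
  I''(s_0) \;=\; \lambda\,\bigl(A'(s_0) \wedge A''(s_0)\bigr).
\end{equation}

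Now I invoke the two sign conditions already established. The clockwise-rotation property of $A(s)$ forced by the contact condition and the orientation convention of section \ref{sec:orientation-convention} gives $A(s_0) \wedge A'(s_0) < 0$, while the hypothesis that $A$ is locally convex is precisely $(A' \wedge A'')/(A \wedge A') > 0$, which combined with the previous inequality yields $A'(s_0) \wedge A''(s_0) < 0$. Combining these with the hypothesis $I(s_0) > 0$, the first displayed equation forces $\lambda > 0$, and then the second shows $I''(s_0) < 0$.

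The main (but very mild) obstacle is simply to be careful that the pairing is well-defined under parallel transport and that the orientation conventions match the sign conventions in the definition of local convexity; the rest is a one-line wedge-product computation in $\R^2$. Once $I'(s_0)=0$ and $I''(s_0)<0$ are established, $s_0$ is a non-degenerate local maximum of $I$, which is the claim.
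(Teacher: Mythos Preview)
Your proof is correct and follows essentially the same approach as the paper's: both identify $\int_\varrho$ with a linear functional on $\R^2$ annihilating $A'(s_0)$, and then use local convexity to deduce that this functional takes opposite signs on $A(s_0)$ and $A''(s_0)$. The paper phrases this geometrically (``$A$ and $A''$ lie on opposite sides of the line spanned by $A'$''), while you write it out explicitly as $\varrho = \lambda\,(A')^\perp$ and compute the wedge products; your route via the separate sign $A\wedge A' < 0$ is a slight detour, since local convexity alone already forces $I(s_0)$ and $I''(s_0)$ to have opposite signs, but the conclusion is the same.
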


Before the proof, we observe that by section \ref{sec:reeb-vf}, the hypothesis is satisfied whenever $\rho$ is the homology class represented by a closed Reeb orbit (see section \ref{sec:reeb-orbits}).

\begin{proof}
  First observe that $s_0$ is a critical point:
  \begin{equation}
    I'(s_0) = \int_{\varrho}A'(s_0) = 0
  \end{equation}
 It remains to show that $I''(s_0) < 0$. The local convexity condition
 \begin{equation}
   \frac{A''\wedge A'}{A \wedge A'} < 0
 \end{equation}
 means that $A$ and $A''$ lie on opposite sides of the line spanned by $A'$. Since $\int_\varrho$ is a linear function that vanishes on that line, it must take opposite signs on $A$ and $A''$. Since it is positive on $A$, it must be negative on $A''$:
 \begin{equation}
   I''(s_0) = \int_\varrho A''(s_0) \text{\quad is opposite to \quad} \int_{\varrho} A(s_0) > 0
 \end{equation}

\end{proof}

\section{The affine manifold}
\label{sec:affine-man}

We recall the construction of the affine manifolds from \cite{ghk}, but with an emphasis on how it follows from the topology of the neighborhood of the boundary divisor. This section consists of elementary $4$--dimensional topology.

\subsection{Initial data}
\label{sec:initial-data}

Let $D$ be a cycle of $m$ rational curves. By this we mean a curve with $m$ nodes whose normalization is the disjoint union of $m$ rational curves, and such that the dual intersection complex is a cycle. If $m = 1$ this means a nodal elliptic curve. In what follows we assume $m \geq 2$; we can reduce to this case by blowing up the node if $m = 1$. As a matter of notation, we will index the \emph{nodes} by $i \in \Z/m\Z$, and we will index the rational curves by either a single index or a pair: 
\begin{equation}
D_i = D_{i,i+1} \text{ connects node $i$ to node $i+1$.}
\end{equation}

Suppose that $\iota: D \to Y$ is an embedding of the singular curve $D$ into an algebraic surface $Y$. If $m > 1$ then each component $D_i \cong \PP^1$ is embedded and has a normal bundle of some degree $k_i$:
\begin{equation}
N_i = \iota^*TY/TD_i \cong \cO_{\PP^1}(k_i).
\end{equation}
If $m = 1$ we mean $N_1 = (\iota\circ\nu)^*(TY)/T\PP^1$, where $\nu : \PP^1 \to D$ is the normalization of the nodal elliptic curve. The topology of a neighborhood of $D$ in $Y$ is completely determined by the numbers $k_i$, which are also characterized as the self-intersection numbers in $Y$:
\begin{equation}
  k_i = D_i^2 
\end{equation}

\subsection{The local model for nodes}
\label{sec:node-model}

 Let $\Delta = \{|z| \leq 1\}$ and $\Delta^* = \Delta \setminus \{0\}$ denote the complex unit disk and its punctured version.

Let us consider the $i$th node. Here the two divisors $D_{i-1,i}$ and $D_{i,i+1}$ intersect transversely. Let us take take a neighborhood $V_i$ and local analytic coordinates $(z_i,w_i) \in \Delta \times \Delta \cong V_i$ such that, locally near the node,
\begin{align}
  D_{i-1,i} &= \{w_i = 0\}\\
  D_{i,i+1} &= \{z_i = 0\}
\end{align}

We may define certain $1$--cycles:
\begin{align}
  \Gamma_{z_i} &= \{(z_i,w_i) = (e^{-i\theta}, 1)\mid \theta \in [0,2\pi)\}\\
  \Gamma_{w_i} &= \{(z_i,w_i) = (1, e^{-i\theta}) \mid \theta \in [0,2\pi)\}
\end{align}
The local model for $U = Y\setminus D$ near the node is simply $(z_i,w_i) \in \Delta^*\times \Delta^*$, which is fibered by the tori $T_{r,s} = \{|z_i| = r, |w_i| = s\}$ where $0<r,s\leq 1$. The loops $\Gamma_{z_i}$ and $\Gamma_{w_i}$ are a basis for the first homology of this neighborhood in $U$.

\begin{remark}
Note that the orientation is such that $\Gamma_{z_i}$ winds \emph{clockwise} in the $z_i$--plane around $\{z_i=0\} = D_{i,i+1}$. In a later section this will be justified by the observation that Reeb orbits near the node are homologous to non-negative linear combinations of $\Gamma_{z_i}$ and $\Gamma_{w_i}$.
\end{remark}

\subsection{The local model for divisors}
\label{sec:divisor-model}

The divisor $D_i$ has a tubular neighborhood isomorphic to a disk bundle $DN_i$ in the normal bundle $N_i \cong \cO(k_i)$. Let $SN_i$ denote the boundary of this tubular neighborhood; it is a circle bundle over $D_i$ and is diffeomorphic to a lens space $L(-k_i,1)$. Two points on $D_i = D_{i,i+1}$ are distinguished: the $i$th and $(i+1)$th nodes. Let $W_{i,i+1}$ denote the complement of $V_i$ and $V_{i+1}$ (the neighborhoods of the nodes) in $DN_i$: 
\begin{equation}
W_{i,i+1} = DN_i \setminus (V_i \cup V_{i+1}).
\end{equation}
For sufficiently small $DN_i$,  $W_{i,i+1}$ is a disk bundle over an annulus, namely the $2$--sphere $D_{i,i+1}$ with two disks removed at the nodes, which we denote by $A_{i,i+1}$. The associated circle bundle $SW_{i,i+1} = W_{i,i+1} \cap SN_i$ is the lens space $SN_i$ with two solid tori removed, and it is non-canonically diffeomorphic to a product $T^2\times I$. The complement of $D$ in this neighborhood $W_{i,i+1} \setminus D_{i,i+1}$ deformation retracts onto the circle bundle, so it has the same first homology. 

The circle fibration gives a long exact sequence in homotopy groups, which reduces to 
\begin{equation}
  \label{eq:h1-exact-seq}
  0 \to \pi_1(F) \to \pi_1(SW_{i,i+1}) \to \pi_1(A_{i,i+1}) \to 0
\end{equation}
where $F$ is the circle fiber over some basepoint in $A_{i,i+1}$. As all three fundamental groups are abelian, this sequence holds with $\pi_1$ replaced by $H_1$, and we will use this notation from now on to sidestep concerns over basepoints. The sequence \eqref{eq:h1-exact-seq} is split, but not canonically. In fact, we have two geometric splittings, induced by the bases $(\Gamma_{z_i},\Gamma_{w_i})$ and $(\Gamma_{z_{i+1}}, \Gamma_{w_{i+1}})$ at the two nodes, as we shall now elaborate.

Near the $i$th node, $D_{i,i+1}$ is given by the equation $z_i = 0$. Thus $\Gamma_{z_i}$ is a loop which links the divisor $D_{i,i+1}$, and hence is homologous to the circle fiber of $SW_{i,i+1}$ equipped with some orientation. Similarly, near the $(i+1)$th node, $D_{i,i+1}$ is given by the equation $w_{i+1} = 0$, and so $\Gamma_{w_{i+1}}$ is a loop which links the divisor, and is also homologous to the circle fiber equipped with some orientation. In fact these orientations agree, as each is a loop in the fiber of the normal bundle which encircles zero clockwise with respect to the natural orientation of the fibers of the normal bundle as complex lines. Thus:
\begin{equation}
  \label{eq:fiber-class}
  \Gamma_{z_i} \sim f \sim \Gamma_{w_{i+1}}
\end{equation}
where $f \in H_1(F)$ is the class of the fiber with appropriate orientation, and $\sim$ means ``is homologous to.''

Near the $i$th node the loop $\Gamma_{w_i}$ projects onto $D_{i,i+1}$ as a loop encircling the $i$th node clockwise with respect to the complex orientation on $D_{i,i+1}$. Near the $(i+1)$th node, the loop $\Gamma_{z_{i+1}}$ projects onto $D_{i,i+1}$ as a loop encircling the $(i+1)$th node clockwise. So together these loops form the oriented boundary of the annulus $A_{i,i+1}$, and we find
\begin{equation}
  -\pi_*\Gamma_{w_i} \sim b \sim \pi_*\Gamma_{z_{i+1}}
\end{equation}
where $\pi: SW_{i,i+1} \to A_{i,i+1}$ is the circle bundle over the annulus and $b \in H_1(A_{i,i+1})$ is an appropriate generator.

\begin{lemma}
It holds that
\begin{equation}
  \label{eq:homology-relation-1}
  -\Gamma_{w_i} \sim \Gamma_{z_{i+1}} + k_i f
\end{equation}
\end{lemma}

\begin{proof}
  It is clear from the preceding discussion that $-\Gamma_{w_i} \sim \Gamma_{z_{i+1}} + \alpha f$ in $H_1(SW_{i,i+1})$ for some constant $\alpha$ to be determined.  We must show $\alpha = k_i$.

An elementary computation with the clutching functions of $N_i \cong \cO_{\PP^1}(k_i)$ shows how $\alpha$ is related to $k_i$. Write $\PP^1$ as the union of two complex coordinate charts $U = \{\zeta \in \C\}$, $V = \{\eta \in \C\}$, glued by the correspondence $\zeta = 1/\eta$. Let $\cL = \cO_{\PP^1}(p)$ be a line bundle of degree $p$. Then we have local trivializations
\begin{align}
  (\zeta,\xi) &\in U \times \C \cong \cL|_U\\
(\eta, \gamma) &\in V \times \C \cong \cL|_V\\
(\zeta, \xi) &= (1/\eta,\zeta^p \gamma)
\end{align}
For $p \geq 0$, one verifies that the sections $s$ given by 
\begin{align}
  \xi &= (s|_U)(\zeta) = \zeta^r\\  
  \gamma &= (s|_V)(\eta) = \eta^{p-r}
\end{align}
for $0 \leq r \leq p$ are valid holomorphic sections, so this line bundle really does have degree $p$. 

Let $\zeta = \ell(\theta) = e^{i\theta}$ be a loop in $\PP^1$. We may lift this loop to the $U$--trivialization as $(\zeta,\xi) = \ell'(\theta) = (e^{i\theta}, 1)$, and to the $V$--trivialization as $(\eta,\gamma) = \ell'(\theta) = (e^{-i\theta},e^{-pi\theta})$ (these loops are geometrically identical). On the other hand, we have the lift $\ell''(\theta)$ given by $(\zeta,\xi) = \ell''(\theta) = (e^{i\theta}, e^{pi\theta})$ and $(\eta,\gamma) = \ell''(\theta) = (e^{-i\theta}, 1)$

 Let $f(\tau) = (1,e^{-i\tau})$ be a loop in the fiber over the point $\zeta = \eta = 1$, which encircles the zero--section clockwise; this loop is given by the same formula in either trivialization. Let $Z = \cL|_{U\cap V} \setminus (U\cap V)$ be the complement of the fibers over $\zeta = 0,\infty$ and the zero--section. Then $H_1(Z;\Z) = \Z^2$, and the classes $[\ell'],[\ell'']$ and $[f]$ are elements of this group. Evidently, we have the relation
 \begin{equation}
   \label{eq:homology-relation-2}
   [\ell'] = [\ell''] + p[f]
 \end{equation}

The space $Z$ is homotopy equivalent to $SW_{i,i+1}$, taking $p := k_i$. Under this correspondence, $[f]$ corresponds to the class of $f$ as in equation \eqref{eq:fiber-class}, $[\ell']$ corresponds to the class of $-\Gamma_{w_i}$, and $[\ell'']$ corresponds to the class of $\Gamma_{z_{i+1}}$. Comparing equations \eqref{eq:homology-relation-1} and \eqref{eq:homology-relation-2}, we see that $\alpha = p = k_i$.
\end{proof}

\subsection{Affine charts and gluing}
\label{sec:affine-charts}

For each node, indexed by $i \in \Z/m\Z$, define an integral cone $Q_i$ that is the non-negative span of $\Gamma_{z_i}$ and $\Gamma_{w_i}$:
\begin{equation}
  Q_i := \{a[\Gamma_{z_i}]+b[\Gamma_{w_i}] \mid a,b \in \Z_{\geq 0}\} 
\end{equation}
These sums may be interpreted as classes in $H_1(V_i\setminus D; \Z)$, where we recall that $V_i$ is the neighborhood of the node. 

We shall show how these cones glue up into an integral linear manifold. Recall that an \emph{integral affine structure} on a manifold is an atlas with transition maps in $\GL(n,\Z)\ltimes \Z^n$. An \emph{integral linear structure} has all transition maps in $\GL(n,\Z)$, basically meaning that the manifold has a well-defined origin.

The analysis of \S \ref{sec:divisor-model} indicates how to glue $Q_i$ and $Q_{i+1}$. Equations \eqref{eq:fiber-class} and \eqref{eq:homology-relation-1} tell us that
\begin{equation}
  \label{eq:gluing-matrix}
  a\Gamma_{z_i}+b\Gamma_{w_i} \sim a'\Gamma_{z_{i+1}}+b' \Gamma_{w_{i+1}} \iff
  \begin{pmatrix}a' \\ b'\end{pmatrix} = 
  \begin{pmatrix} 0 & -1\\ 1 & -k_i\end{pmatrix}
  \begin{pmatrix}a\\b  \end{pmatrix}
\end{equation}
where the homological relation in equation \eqref{eq:gluing-matrix} holds in $SW_{i,i+1}$. When we identify the lattices $H_1(V_i\setminus D;\Z)$ and $H_1(V_{i+1}\setminus D;\Z)$ using this linear transformation, we find that the images of $Q_i$ and $Q_{i+1}$ intersect along the ray spanned by $\Gamma_{z_i} \sim \Gamma_{w_{i+1}}$:
\begin{equation}
  Q_i \cap Q_{i+1} = \Z_{\geq 0} \cdot [\Gamma_{z_i}] = \Z_{\geq 0} \cdot [\Gamma_{w_{i+1}}]
\end{equation}
Thus we may glue $Q_i$ to $Q_{i+1}$ along this common edge to define an integral linear structure on the union $Q_i \cup Q_{i+1}$. This structure is the one induced by embedding the two cones into a common lattice as above.






There is another characterization of this linear structure in terms of the intersection form of $Y$. A integral linear structure is determined by the corresponding sheaf of integral linear functions. An integral linear function $f: Q_i \cup Q_{i+1} \to \Z$ is determined by three numbers $\alpha = f(\Gamma_{w_i})$, $\beta = f(\Gamma_{z_i}) = f(\Gamma_{w_{i+1}})$ and $\gamma = f(\Gamma_{z_{i+1}})$, as the linear structure within each cone is standard. In order for $f$ to be linear we need
\begin{equation}
  \alpha = f(\Gamma_{w_i}) = f(-\Gamma_{z_{i+1}}-k_i \Gamma_{w_{i+1}}) = -\gamma - k_i \beta
\end{equation}
Or in other words $\alpha + k_i \beta + \gamma = 0$. Recalling that $k_i = D_i^2$, and $D_{i-1}$ and $D_{i+1}$ are transverse to $D_i$, this is equivalent to the orthogonality condition
\begin{equation}
  (\alpha D_{i-1} + \beta D_i + \gamma D_{i+1})\cdot D_i = 0
\end{equation}

We define a singular integral linear manifold $U^\trop$ to be the union of the cones $Q_i$, glued along edges as above. An example is depicted in Figure \ref{fig:integral-points}. This manifold has $Q_i \cup Q_{i+1}$ as charts, but there is no way to extend the linear (or even affine) structure to the origin (the triple overlaps of the charts), so we simply regard that as a singularity. As the manifold $U^\trop$ is defined over $\Z$, we use the notation $U^\trop(\Z)$ or $U^\trop(\R)$ to denote integral and real points respectively. The real points $U^\trop(\R) \setminus \{0\}$ form an affine manifold in the usual sense. A special feature of the surface case is that $U^\trop(\R)$ (with the singular point included) is actually a topological manifold homeomorphic to $\R^2$.

A locally linear function $f: U^\trop \setminus \{0\}\to \Z$ is determined by its values on the rays $\alpha_j = f(\Gamma_{z_j})$ and the condition of global linearity is equivalent to the orthogonality condition
\begin{equation}
 (\forall i) \quad \left(\sum_{j=1}^m \alpha_j D_j\right)\cdot D_i = 0
\end{equation}

\section{The differential on symplectic cohomology}
\label{sec:differential}

In this section we compute the differential on symplectic cohomology and prove theorem \ref{thm:main}. 

\subsection{Holomorphic curves in $\Sigma$}
\label{sec:holomorphic-cylinders}

The basis for our method of computation is the following nonexistence result. It is proved by adapting a method of Bourgeois and Colin \cite{bourgeois-colin}. We remark that this approach very much uses the low-dimensionality of our situation. Throughout this subsection, it may be helpful to have in mind Figure \ref{fig:hol-curve} that depicts the projection to $\Sigma$ of a holomorphic curve in relation to the torus fibration $\pi : \Sigma \to S$. The following Theorem concerns holomorphic curves with several positive and negative punctures, but in the case where there is more than one positive puncture, we require all the corresponding Reeb orbits to lie in a single periodic torus (there is no analogue without this condition).
\begin{figure}
  \centering
  \includegraphics[width=3in]{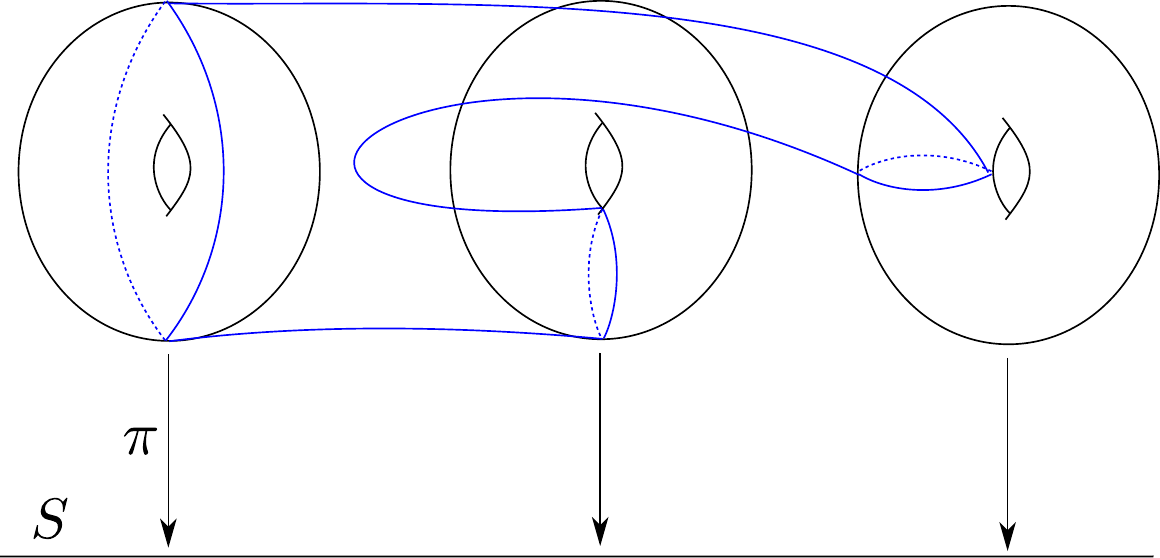}
  \caption{The projection to $\Sigma$ of a holomorphic curve in relation to the projection $\pi : \Sigma \to S$.}
  \label{fig:hol-curve}
\end{figure}

\begin{theorem}
  \label{thm:no-hol-cylinders}
  Let $(\Sigma,\alpha)$ be as above, so that the Liouville class is locally convex. Let $f: C \to \Sigma$ be a map with finite energy satisfying $J_{\xi} \circ \pi_{\xi} \circ df = \pi_{\xi} \circ df \circ j$, where the domain Riemann surface $C$ has genus zero, with several positive punctures and several negative punctures asymptotic to Reeb orbits in $\Sigma$. We assume that all Reeb orbits at the positive punctures lie in a \emph{single} periodic torus in $\Sigma$ (this is automatic if there is only one positive puncture). Then $f$ is trivial, that is, the image of $f$ is a closed integral curve of the Reeb field $R$.
\end{theorem}

\begin{proof}
By definition, the map $f$ satisfies
  \begin{equation}
    J_\xi \circ \pi_\xi \circ df = \pi_\xi \circ df \circ j
  \end{equation}
  where $j$ is the complex structure on the domain $C$, $\pi_\xi : T\Sigma \to \xi$ is the projection with kernel spanned by the Reeb vector, and $J_\xi$ is the almost complex structure on the contact distribution. Consider the two form $f^*d\alpha$ on $C$. By proposition \ref{prop:positivity-contact-target}, $f^*d\alpha$ is non-negative and only vanishes at points  where $df$ maps $TC$ into the line spanned by $R$.

We furthermore consider the composition of $f$ with the projection to the base $\pi: \Sigma \to S$. We first show that if $\pi\circ f$ is constant then the conclusion of the theorem follows. Observe that the tangent map $d\pi$ maps $\xi$ surjectively onto $TS$ with a one-dimensional kernel. If $\pi \circ f$ is constant, then $df(v)$ lies in $\ker d\pi$ for every $v$, and so $\pi_\xi \circ df(v)$ lies in the line bundle $\ker d\pi \cap \xi$ for every $v$. Thus $\pi_\xi \circ df(v)$ and $\pi_\xi\circ df(jv)$ are proportional, and their symplectic pairing under $d\alpha$ vanishes. Thus $\pi_\xi\circ df(v) = 0$ for every $v$. Hence $df(v)$ is always proportional to $R$, and the image of $f$ must necessarily be a closed integral curve of $R$, which was to be shown.

It remains to rule out the possibility that $\pi\circ f$ is non-constant. This is due to a \emph{local} energy obstruction to the existence of maps with certain topologies. We have split off this part of the argument into the following lemma, which completes the proof of the theorem.
\end{proof}

\begin{lemma}
  Let $f$ satisfy the hypothesis of theorem \ref{thm:no-hol-cylinders}. Then the composition $\pi \circ f$ is constant.
\end{lemma}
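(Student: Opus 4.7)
The plan is to argue by contradiction, assuming $\pi \circ f$ is non-constant. By standard finite-energy asymptotic analysis for pseudo-holomorphic maps with punctures asymptotic to Reeb orbits, $f$ converges at the positive puncture to a closed Reeb orbit $\gamma_+$ lying in a fiber $\pi^{-1}(s_+)$; let $\varrho \in H_1(\pi^{-1}(s_+),\Z)$ denote its class. By section \ref{sec:reeb-vf}, $\varrho$ satisfies the hypotheses of Lemma \ref{lem:convex-maximum-property}, so the period function $I(s) = \int_\varrho A(s)$ has a strict non-degenerate local maximum at $s = s_+$.

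The first step is to record that the function $u := \pi \circ f : C \to S$ has no interior local extrema. The critical set of $u$ is the locus where $df$ lies in the Reeb line, which by Proposition \ref{prop:hol-map-decomp} agrees with the zero set of the complex-linear bundle map $\pi_\xi \circ df : TC \to \xi$. By unique continuation for $\bar\partial$-closed sections of complex line bundles this zero set is discrete, and near each interior critical point $p$ one has $u - u(p) = \operatorname{Re}(z^{k+1}) + O(|z|^{k+2})$ in a suitable complex coordinate $z$ on $C$. Interior critical points are therefore of saddle type, and the extrema of $u$ on the compactification $\overline{C}$ occur only at the punctures.

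The second step is a Stokes-theorem and positivity argument adapted from Bourgeois--Colin \cite{bourgeois-colin}. Choose a regular value $s'$ of $u$ with $s' < s_+$ sufficiently close to $s_+$ that the interval $(s', s_+]$ contains no interior critical values of $u$ and no asymptotic values $s_j^-$ of negative punctures. Let $P_+$ denote the connected component of $u^{-1}((s', s_+])$ containing the positive puncture. By the structure near the puncture, the genus-zero assumption, and the absence of interior critical values in $(s',s_+]$, $P_+$ is topologically an annulus whose boundary is a single circle $\delta \subset \pi^{-1}(s')$; since $f(P_+) \subset \pi^{-1}((s',s_+])$ sits inside a trivial $T^2$-bundle, this circle represents $\varrho$ under the canonical identification of $H_1$ of the fibers. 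Stokes' theorem together with Proposition \ref{prop:positivity-contact-target} then relates the nonnegative quantity $\int_{P_+} f^*d\alpha$ to the difference $I(s_+) - I(s')$.

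The main obstacle is extracting a genuine contradiction with the strict local maximum $I(s_+) > I(s')$: computing the area of $P_+$ alone is consistent with the strict maximum. One must additionally invoke the analogous convex-maximum property for each negative puncture's class $\varrho_j^-$, the topological constraints on how the classes $\varrho, \varrho_1^-, \ldots, \varrho_k^-$ assemble in $H_1(\Sigma)$ for a genus-zero $C$, and a combined Stokes computation on the complement $C \setminus P_+$ (whose boundary carries the opposite class $-\varrho$ and which contains all negative punctures), to produce an incompatible inequality. This coupling of the maximum-period property with the global topology of the level sets of $u$ is the technical heart of the proof, and is the toroidal contact-homology argument of \cite{bourgeois-colin} adapted to the Morse--Bott Floer setting of the present paper.
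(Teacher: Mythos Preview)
Your argument has a genuine gap, and you correctly diagnose it yourself in the final paragraph: focusing on the positive puncture gives an inequality that is \emph{consistent} with positivity of area, not contradictory. The period integral $I$ has a local maximum at $s_+$, so $I(s_+) - I(s') > 0$, which matches $\int_{P_+} f^*d\alpha \geq 0$. Nothing goes wrong, and the vague appeal to ``combined Stokes computation on the complement'' and ``topological constraints'' does not constitute a proof.

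The paper's argument fixes this by looking at the \emph{other} end. Lift $f$ to the universal cover $\tilde{\Sigma} \to \tilde{S} \cong \R$ (possible since $\pi\circ f$ extends over the punctures and is hence null-homotopic). The real-valued function $\tilde{\pi}\circ\tilde{f}$ attains a global maximum and minimum; since there is only one positive puncture, at least one global extremum $s_0$ is \emph{not} the limit of the positive puncture. Now there are two cases. If $s_0$ is not the limit of any puncture, take a nearby regular value $s_1$ and let $P = (\tilde\pi\circ\tilde f)^{-1}[s_0,s_1]$; since $P$ can be pushed into a single torus fiber, $\partial P$ is null-homologous there, so $\int_P f^*d\alpha = \int_{\partial P}\alpha = 0$, forcing $P$ into a Reeb trajectory --- contradiction. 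If $s_0$ is the limit of a negative puncture, then the boundary circles at $s_0$ are Reeb orbits with the Reeb orientation, and Lemma~\ref{lem:convex-maximum-property} gives a local maximum of the corresponding period integral at $s_0$; Stokes on $P$ then yields $\int_P f^*d\alpha = I(s_1) - I(s_0) < 0$, again a contradiction. The crucial point is that at a \emph{negative} puncture the orientation reversal makes the convexity inequality cut the right way.

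A secondary issue: your ``first step'' misidentifies the critical set of $u = \pi\circ f$. Since $d\pi|_\xi$ has a one-dimensional kernel (the line $\xi \cap \ker d\pi$), critical points of $u$ occur wherever $\pi_\xi\circ df$ lands in that line, not only where $\pi_\xi\circ df$ vanishes. The saddle-type claim therefore needs more work. The paper avoids this entirely: rather than ruling out interior extrema in advance, it treats that possibility as one of the two cases above and dispatches it with the null-homology argument.
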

\begin{proof}
Suppose that the projection $\pi \circ f$ is non-constant. At each puncture, $f$ is asymptotic to a Reeb orbit, which by assumptions lies entirely within a torus fiber of $\pi$. We may compactify $C$ to $\bar{C}$ by adding a circle around every puncture and extend $f$ to a continuous map $\bar{C} \to \Sigma$, which we also denote by $f$, that maps these boundary circles to the Reeb orbits. Because these Reeb orbits are mapped to points by $\pi$, the composite map $\pi \circ f : C \to S$ may be extended to a continuous map whose domain is $\CP^1$. Hence a small loop around the puncture maps to a null-homotopic loop in $S$. As the fundamental group of $C$ is generated by such loops, we see that $\pi \circ f$ is null-homotopic. If $\tilde{\pi}: \tilde{\Sigma} \to \tilde{S}$ is the pull back of the torus fibration to the universal cover of the base, we find that $f : \bar{C} \to \Sigma$ lifts to $\tilde{f} : \bar{C} \to \tilde{\Sigma}$. Now consider the projection $\tilde{\pi}\circ \tilde{f} : \bar{C} \to \tilde{S}$, and note $\tilde{S} \cong \R$. As $\bar{C}$ is compact, we find that $\tilde{\pi}\circ\tilde{f}$ has a global maximum and minimum. Since $\pi\circ f$ is assumed non-constant, and all positive punctures are assumed to be asymptotic to orbits lying in a single periodic torus, at most one of these global extrema can be the limit of a positive puncture. So we obtain a point $s_0 \in \tilde{S}$ which is a global extreme value of $\tilde{\pi}\circ \tilde{f}$, and which is not the limit of a positive puncture.

Now we claim that $s_0$ must be the limit at a negative puncture. If not, then let $s_1 \in \tilde{S}$ be a regular value of $\tilde{\pi}\circ\tilde{f}$ that lies between $s_0$ and the nearest point which is a limit of a puncture. Then $P = (\tilde{\pi}\circ\tilde{f})^{-1}[s_0,s_1] \subset C$ is a compact submanifold whose boundary is a collection of smooth curves. These boundary curves map to the fiber $\tilde{\pi}^{-1}(s_1)$. We claim
\begin{equation}
  \int_P d\alpha = \int_{\partial P} \alpha = 0
\end{equation}
This follows once we show that $u|_{\partial P}$ is null-homologous in the fiber torus $\tilde{\pi}^{-1}(s_1)$, since $\alpha$ is closed on this torus. But indeed, the capping surface $u|_{P}$ can be pushed entirely inside of this fiber, since the torus fibration over the interval $[s_0,s_1]$ is trivial. Now we have that $u|_{P}$ is a holomorphic curve with vanishing $d\alpha$--area. Thus $u(P)$ must be contained in a closed integral curve of $R$, contradicting the construction of $P$.

The only remaining possibility is that the extremum $s_0$ is the limit at a negative puncture. In the contact manifold $\tilde{\Sigma}$, the map $\tilde{f}$ is asymptotic to a closed Reeb orbit at that puncture. Since the puncture is negative, the orientation of this curve by the vector field $R$ is \emph{opposed} to the orientation of this curve as the boundary of $\bar{C}$ (where we compactify $C$ to $\bar{C}$ by adding a circle around every puncture). Again choose a regular value $s_1$ between $s_0$ and the nearest point which is a limit of a puncture. Then $P = (\tilde{\pi}\circ\tilde{f})^{-1}[s_0,s_1] \subset \bar{C}$ is a compact surface in $\bar{C}$ whose boundary consists of some circles $\partial_1 P \subset \tilde{\pi}^{-1}(s_1)$, along with some circles in the fiber $\partial_0P \subset \tilde{\pi}^{-1}(s_0)$. We orient the boundary so that $\partial P = \partial_1P - \partial_0P$, and then the circles comprising $\partial_0P$ are all geometrically Reeb orbits with the Reeb orientation.

Because, as before, the entire surface $P$ can be pushed into one fiber of $\tilde{\pi}$, we find that each of the regular level sets $(\tilde{\pi}\circ \tilde{f})^{-1}(s)$ represents the same homology class in the torus fiber for $s_0 < s < s_1$. Call this class $\varrho$. Looking at $\partial_0P$, we see that this class $\varrho$ is a strictly positive multiple of the class of the primitive Reeb orbit in the fiber $\tilde{\pi}^{-1}(s_0)$.

Now we shall apply lemma \ref{lem:convex-maximum-property}. Consider the ``period integral''
\begin{equation}
  I(s) = \int_\varrho A(s)
\end{equation}
By applying lemma \ref{lem:convex-maximum-property} to $\varrho$, we find that $I(s)$ has a non-degenerate local maximum at $s = s_0$. Now the $d\alpha$-area of $f$ is
\begin{equation}
  \int_P d\alpha = \int_{\partial_1P} \alpha - \int_{\partial_0P} \alpha = \int_\varrho A(s_1) - \int_\varrho A(s_0) = I(s_1) - I(s_0) < 0
\end{equation}
which is impossible.

\end{proof}

\begin{corollary}
  \label{cor:no-hol-cylinders-in-symplectization}
  Let $u: C \to \Sigma \times \R$ be a pseudo-holomorphic curve in the symplectization of $\Sigma$ (where the complex structure on $\Sigma \times \R$ is cylindrical), such that all positive punctures are asymptotic to Reeb orbits lying in a single periodic torus. Then $u$ is trivial, that is, the image of its projection to $\Sigma$ consists of a closed integral curve of $R$.

  Let $H : \Sigma \times \R \to \R$ be a function that depends only on the $\R$ component. Let $u: C \to \Sigma \times \R$ be an inhomogeneous pseudo-holomorphic curve with Hamiltonian $H$ satisfying the same condition on the positive punctures. Then $u$ is trivial.\end{corollary}
\begin{proof}
  Clear from propositions \ref{prop:hol-map-decomp} and \ref{prop:floer-map-decomp}.
\end{proof}

\subsection{Neck stretching}
\label{sec:neck-stretching}

It is possible to combine these nonexistence results with an SFT-style neck-stretching argument to prove nonexistence in other situations. This neck-stretching technique was introduced by Bourgeois and Oancea \cite[\S 5]{bo-exact-sequence} in order to construct an exact sequence relating symplectic cohomology and contact homology. We comment that the main result of Bourgeois and Oancea has stringent technical hypotheses (expected to be alleviated using the polyfold theory of Hofer-Wysocki-Zehnder) related to transversality of holomorphic curve moduli spaces. However, in the present paper, we only use the ``compactness'' direction of their argument, and we neither ``count'' nor ``glue'' holomorphic curves in any situation where transversality cannot be achieved by perturbation of $H$ and $J$.

\begin{remark}
  Diogo's thesis \cite{diogo-thesis}, as well as forthcoming work of Diogo-Lisi expand on the idea of using the neck-stretching process to understand symplectic cohomology, in situations relevant to the present paper, namely, manifolds obtained as the complement of a symplectic hypersurface in a compact symplectic manifold.
\end{remark}

For symplectic cohomology, we have a Hamiltonian function $H$ on $M = \overline{U}' \cup_\Sigma \Sigma \times [0,\infty)$, which is zero in the interior and depends only on $\rho$ on the cylindrical end, and its time-dependent perturbation $K: S^1\times M \to \R$, and we have a time-dependent perturbation of the almost complex structure $J(t,x)$. We will consider a deformation of these structures parametrized by $t \in [1,\infty)$: 
\begin{enumerate}
\item As $t \to \infty$, we deform the almost complex structure by ``stretching the neck'' along a contact type hypersurface $\Sigma'$ constructed as follows. Recall that our manifold $M = \overline{U}' \cup_\Sigma \Sigma \times [0,\infty)$ is composed of the interior $\overline{U}'$ and the end $\Sigma \times [0,\infty)$. The Hamiltonian is $C^2$-small on the interior, and grows on the end. Take a hypersurface $\Sigma'$ that is obtained by pushing $\Sigma$ into $\overline{U}'$ by the compressing Liouville flow, so that $\Sigma'$ sits in the region where the Hamiltonian is $C^2$-small. This in particular means that all Hamiltonian orbits corresponding to Reeb orbits are ``outside'' the hypersurface we stretch along. 
\item As $t \to \infty$, we take the perturbation of the Hamiltonian to zero, making the Hamiltonian closer to the original autonomous, radial Hamiltonian. This in particular means that the Hamiltonian becomes zero in the stretching region. 
\item As $t \to \infty$, the almost complex structure $J(t,x)$ becomes time-independent and cylindrical in both the stretching region and the end.
\end{enumerate}
This deformation is chosen so that we can invoke the SFT compactness theorem \cite{behwz}, which is stated for pseudo-holomorphic curves without Hamiltonian perturbation.

We shall consider this neck-stretching for four different operations. Three of them use the cylinder as domain: The differential $d$, the BV operator $\Delta$, and the continuation map $\phi$. The fourth is the product $\theta^n * \theta^m$ of two generators corresponding to iterates of the \emph{same} periodic torus, where the domain is a pair of pants. We emphasize that we are \emph{not} presently considering the product of generators corresponding to \emph{different} periodic tori. We remark that, in the case of the continuation map, we have two different Hamiltonians; we assume that both are converging to Hamiltonians that vanish in the stretching region and which are radial on the end.

\begin{definition}
If $\gamma$ is a Hamiltonian orbit that is obtained from perturbation of the torus corresponding to the $m$-th iterate of a periodic torus of Reeb orbits, we call $m$ the \emph{multiplicity} of $\gamma$ and write $\Mult(\gamma) = m$. We call $\gamma$ \emph{primitive} if $\Mult(\gamma) = 1$.
\end{definition}

\begin{proposition}
\label{prop:neck-stretching}
Let $\cM(\gamma_-,\gamma_+)$ be a moduli space of solutions to Floer's equation on a cylinder, asymptotic to Hamiltonian orbits $\gamma_-$ (at the output) and $\gamma_+$ (at the input) in the cylindrical end of $M$, that is counted either by the differential, the BV operator, or a continuation map. If there are arbitrarily large values of the neck-stretching parameter $t \in [1,\infty)$ such that $\cM(\gamma_-,\gamma_+)$ is nonempty, then $\gamma_-$ and $\gamma_+$ correspond to iterates of the same periodic torus. Furthermore, the multiplicities satisfy $\Mult(\gamma_-) \leq \Mult(\gamma_+)$.
\end{proposition}

\begin{proof}
  Suppose there is a sequence of parameter values $t_i$ converging to infinity such that $\cM(\gamma_-,\gamma_+)$ is always nonempty. Then, by the SFT compactness theorem \cite{behwz}, after possibly passing to a subsequence, there is a sequence of Floer solutions for each of these parameter values that converges to a generalized holomorphic building. The bottom level of this building is a holomorphic curve in $M$ for the complex structure $J$, the the top level is a Floer solution for the radial Hamiltonian and cylindrical complex structure in the symplectization $\Sigma \times \R$, and any intermediate levels are holomorphic curves in $\Sigma \times \R$. The curves in these various levels are asymptotic to Reeb orbits in $\Sigma$. 

Since the original asymptotics $\gamma_-$ and $\gamma_+$ are in the end, there must be at least one symplectization level. Since the original curves all have the topology of a cylinder, each component of the curve in each level has at most one positive puncture. Thus Corollary \ref{cor:no-hol-cylinders-in-symplectization} implies that all such curves are trivial. This in particular applies to the top level, so $\gamma_-$ and $\gamma_+$ must correspond iterates of the same periodic torus. If this top level has any other negative punctures, they must also correspond to iterates of the same periodic torus. Since the total multiplicity of $\gamma_-$ and these negative punctures must equal the multiplicity of $\gamma_+$, we find that the multiplicity of $\gamma_-$ is less than or equal to that of $\gamma_+$.
\end{proof}

\begin{proposition}
\label{prop:neck-stretch-product}
Let $\gamma_{1,+}$ and $\gamma_{2,+}$ be Hamiltonian orbits corresponding to iterates of the same periodic torus. Let $\cM(\gamma_-,\gamma_{1,+},\gamma_{2,+})$ be the moduli space of pairs of pants used to compute the coefficient of $\gamma_-$ to the product of $\gamma_{1,+}$ and $\gamma_{2,+}$. If there are arbitrarily large values of the neck-stretching parameter $t \in [1,\infty)$ such that $\cM(\gamma_-,\gamma_{1,+},\gamma_{2,+})$ is nonempty, then $\gamma_-$ corresponds to an iterate of the same periodic torus as $\gamma_{1,+}$ and $\gamma_{2,+}$. Furthermore, the multiplicities satisfy $\Mult(\gamma_-) \leq \Mult(\gamma_{1,+}) + \Mult(\gamma_{2,+})$.
\end{proposition}

\begin{proof}
  This proof is analogous to the previous one but uses Corollary \ref{cor:no-hol-cylinders-in-symplectization} in the case where there is more than one positive puncture, but both positive punctures are asymptotic to orbits corresponding to the same periodic torus. 

The building that we obtain from SFT compactness has several levels, but we are only interested in the top one. Since, in the top level, both positive punctures are asymptotic to orbits lying on the same torus, namely $\gamma_{1,+}$ and $\gamma_{2,+}$, we find that by Corollary \ref{cor:no-hol-cylinders-in-symplectization} that level is trivial. Thus $\gamma_-$, which also lives in the top level, must correspond to the same torus as $\gamma_{1,+}$ and $\gamma_{2,+}$. Furthermore, any other negative punctures on that level must also correspond to the same torus, from which we obtain $\Mult(\gamma_-) \leq \Mult(\gamma_{1,+}) + \Mult(\gamma_{2,+})$.
\end{proof}

\begin{remark}
  \label{rem:interior-gen-product}
  The preceding proposition has an extension to the case where either $\gamma_{1,+}$ or $\gamma_{2,+}$ is an interior generator, where we interpret interior generators as having multiplicity zero. Without loss of generality assume that $\gamma_{2,+}$ is an interior generator. We contend that $\Mult(\gamma_{-}) \leq \Mult(\gamma_{1,+})$, for any possible $\gamma_{-}$ appearing in the product. We apply the neck-stretching as in the preceding propositions, and obtain a limiting Morse-Bott building. In this case, the top level has only one positive puncture corresponding to $\gamma_{1,+}$ (the other input $\gamma_{2,+}$ remains inside the interior), and the argument proceeds as in the proof of Proposition \ref{prop:neck-stretching}.
\end{remark}

\subsection{The complex torus as a local model}
\label{sec:torus-local}

In this section we will revisit from our current standpoint the case of the complex torus $U = (\C^\times)^2$ that was discussed in Section \ref{sec:complex-torus}. We know there is a Viterbo isomorphism $SH^*((\C^\times)^2) \cong H_{2-*}(\cL T^2)$ that intertwines the product and BV operator. We shall give a more explicit description of how this isomorphism and the operations on $SH^*((\C^\times)^2)$ behave at the chain level (or at least on the $E_1$ page of a Morse-Bott spectral sequence), since this will be used as a local model for computations in the general case.

If we compactify $(\C^\times)^2$ to a toric variety $Y$, we get a log Calabi-Yau pair $(Y,D)$ where $D$ is the toric boundary divisor. The constructions of Sections \ref{sec:construction} and \ref{sec:affine-man} lead to the following picture. The contact boundary $\Sigma$ is a trivial fibration $T^2 \times S^1 \to S^1$, so the monodromy is trivial. In this case, the affine manifold $U^\trop$ is just $\R^2$. The Liouville class $A(s)$ is a locally convex loop in $H^1(T^2;\R)$. Thus $A'(s)$ is also a loop that rotates monotonically. For each parameter value $s$ such that $A'(s)$ vanishes on some integral vector in $H_1(T^2;\Z)$, the Reeb flow has a periodic torus. Thus there is one periodic torus for each rational direction in $H_1(T^2;\Z)$ (recall that a direction is called rational if the line in that direction contains an integral vector).

Taking a radial quadratic Hamiltonian $H$, we find that each periodic torus creates a family of tori of Hamiltonian orbits corresponding to all the iterates of simple Reeb orbits on a single torus. Each torus of Hamiltonian orbits has a corresponding class in $H_1((\C^\times)^2;\Z)$, namely the class represented by a single orbit in that torus. Observe each nonzero class in $H_1((\C^\times)^2)$ is represented by exactly one such torus of Hamiltonian orbits; this is a key consequence of our assumption that $A(s)$ is locally convex. 

\begin{proposition}
\label{prop:c-star-spectral-sequence}
  Taking a small perturbation of the radial Hamiltonian $H$ and close to the neck-stretching limit, we obtain a Morse-Bott spectral sequence converging to $SH^*((\C^\times)^2)$ whose $E_1$ term is
  \begin{equation}
    E_1 = H^*(T^2) \oplus \left(\bigoplus_{p \in H_1((\C^\times)^2;\Z)\setminus \{0\} }H^*(T_p)\right)
  \end{equation}
  where $T_p$ denotes the torus of Hamiltonian orbits representing the class $p$. This spectral sequence degenerates at $E_1$. Furthermore, the Viterbo isomorphism carries $H^0(T^2_p)$ isomorphically onto the component of $H_2(\cL T^2)$ supported on loops in the class $p$.
\end{proposition}

\begin{proof}
  After a small non-autonomous perturbation is added to the Hamiltonian, each torus of periodic orbits breaks into several non-degenerate orbits. We assume that the non-autonomous term is supported in a union of small pairwise disjoint neighborhoods of the periodic tori; this is possible since the tori are not arbitrarily close to one another. 

First we claim that the orbits created by perturbation of the Hamiltonian stay near the torus $T_p$. This is because outside a small neighborhood of the $T_p$, the flow is unperturbed, and is given by translation along the fibers of $\pi \times \text{id}: \Sigma \times \R \to S^1 \times \R$. Thus there are no trajectories connecting the unperturbed region to the perturbed region, and we find that all trajectories starting in the perturbed region remain within it.

Next we claim that the Floer trajectories connecting two orbits $\gamma_+$ and $\gamma_-$ coming from the perturbation of $T_p$ lie in a neighborhood of $T_p$. We use the idea from the proof of Proposition \ref{prop:neck-stretching}. There we showed that in the neck-stretching limit, a sequence of cylinders under consideration has a subsequence converging to a Morse-Bott inhomogeneous pseudo-holomorphic building such that all levels mapping to the symplectization are trivial. Since $\Mult(\gamma_+) = \Mult(\gamma_-)$ by assumption, the only possibility is that the building has a single level consisting of a trivial cylinder, together with two Morse trajectories on the torus $T_p$, one at each end of the cylinder. The limit cylinder has energy exactly zero, and its image is some Reeb orbit contained in $T_p$. This shows that the original sequence of cylinders has a subsequence that converges uniformly to a map whose image is contained in $T_p$. Given $\epsilon > 0$, if there were a sequence of cylinders that is not eventually contained in an $\epsilon$-neighborhood of $T_p$, this condition would be violated, so we conclude that near the neck-stretching limit the cylinders connecting $\gamma_+$ and $\gamma_-$ all lie near $T_p$.

As is standard in the Morse-Bott situation, the cohomology with respect to differential that counts the low-energy cylinders remaining close to the torus recovers the cohomology of the original torus of orbits. In the interior, we take the Hamiltonian to be $C^2$-small, so we just recover the ordinary cohomology. This is the $E_1$ page written above.

The fact that the spectral sequence must degenerate at $E_1$ is \emph{in this particular case} a consequence of the Viterbo isomorphism. For any manifold $M$, both $SH^*(M)$ and $H_{n-*}(\cL M)$ have decompositions according to homology classes of loops, and this decomposition is respected by all operations and by the Viterbo isomorphism. Take a class $p \in H_1((\C^\times)^2;\Z)$. The $p$-summand of the $E_1$ page is four-dimensional, which is the same as the dimension of the $p$-summand of the loop space homology. Therefore there can be no further differentials. 

The isomorphism of $H^0(T_p)$ with the component of $H_2(\cL T^2)$ supported on loops in the class $p$ now follows from degree considerations.
\end{proof}

\begin{remark}
  The contribution of each periodic torus to the $E_1$ page, here and below (Proposition \ref{prop:perturbation-indices}) is an example of the local Floer homology as studied by McLean \cite{mclean-local-floer}.
\end{remark}

Now, since the BV operator and the product respect the action filtration, they are compatible with the spectral sequence, and give rise to operations on the $E_1$ page. We shall now describe these operations explicitly.

\begin{proposition}
\label{prop:cstar-bv-product}
  On the $E_1$ page, the BV operator preserves the subspaces $H^*(T_p)$, and it is given by counting cylinders that remain near the periodic tori of $H$. The product is homogeneous with respect to the grading by $p \in H_1((\C^\times)^2;\Z)$, and for fixed primitive class $p$ and $r, r', r'' > 0$ such that $r'' = r + r'$, the component of the product
  \begin{equation}
    H^*(T_{rp}) \otimes H^*(T_{r'p}) \to H^*(T_{r''p})
  \end{equation}
  is given by counting pairs of pants contained in a small neighborhood of the trivial cylinder over the torus of Reeb orbits in the class $p$. 
\end{proposition}

\begin{proof}
  The fact that the operations are homogeneous with respect to the $H_1((\C^\times)^2;\Z)$ grading is obvious from the fact that they count maps of Riemann surfaces.

In the proof of Proposition \ref{prop:c-star-spectral-sequence}, we saw that the cylinders contributing to the differential remain close to the periodic torus. The same argument (based on Proposition \ref{prop:neck-stretching}) applies to the cylinders contributing to the BV operator.

For the statement about the products of elements corresponding to iterates of the same primitive class, we adapt the argument using Proposition \ref{prop:neck-stretch-product}. Once again, in the neck-stretching limit any pairs of pants contributing to the product must limit to a Morse-Bott inhomogeneous pseudo-holomorphic building, all of whose levels mapping to the symplectization are trivial, and all Reeb orbits involved correspond to iterates of $T_p$. Since no combination of such orbits is homologically trivial in $(\C^\times)^2$, the building can have no level mapping to the interior. Thus the pair of pants in question must eventually be contained in a neighborhood of the trivial cylinder over $T_p$.

\end{proof}

At various points in the following arguments we will want to argue that the lowest-energy contribution to an operation involving certain generators is the corresponding operation in the case of $(\C^\times)^2$. The idea behind this is as follows. Let $(\Sigma,\alpha)$ denote the contact boundary of our $U$, and let $(\Sigma_0,\alpha_0)$ denote the contact boundary in the case of $(\C^\times)^2$ considered above. Both of these manifolds are torus bundles over the circle; let $\tilde{\Sigma}$ and $\tilde{\Sigma}_0$ denote the pullbacks over the universal covering $\R \to S^1$. 

\begin{lemma}
\label{lem:contact-lift}
Both $\tilde{\Sigma}$ and $\tilde{\Sigma}_0$ are diffeomorphic to $\R \times T^2$, and there is a reparametrization of the base $\R$ that makes them isomorphic as contact manifolds, and such that the contact forms differ by a scaling factor that depends only on the base coordinate $s \in \R$. Any genus zero pseudo-holomorphic map in the symplectization of $\Sigma$ or $\Sigma_0$ will lift to the respective coverings. 
\end{lemma}

\begin{proof}
 The reason why $\tilde{\Sigma}$ and $\tilde{\Sigma}_0$ are isomorphic is that the contact structure is in both cases determined by the Liouville class $A(s)$ up to scale, and in both cases this Liouville class $A(s) : \R \to H^1(T^2;\R)$ is a (locally convex) path that winds infinitely many times around the origin.

For a genus zero curve, we see that since all orbits in question project to contractible loops in $S^1$, the image of the fundamental group of the domain in the fundamental group of $S^1$ is trivial. Thus it can be lifted to the covering.
\end{proof}

The upshot of this lemma is that, when we are comparing the Floer theory of $U$ to that of $(\C^\times)^2$, the contributions to the operations that come from curves living entirely in the cylindrical end must correspond to one another, since both can be pulled back to $\tilde{\Sigma}\times \R \cong \tilde{\Sigma}_0 \times \R$. On the other hand, there may be some difference coming from the presence of curves that leave the cylindrical end and enter the interior of the manifold. However, these curves will have higher energy than the curves contained in the end, and so they must contributed to higher action terms in the output.

\subsection{Generators of symplectic cohomology}
\label{sec:generators}

We continue the analysis of section \ref{sec:construction}, picking up with the output of Step 5, a Liouville structure on $\overline{U}'$ such that the boundary contact form is toroidally symmetric and the Liouville class is locally convex.

\subsubsection{Periodic orbits}
\label{sec:reeb-orbits}

Recalling section \ref{sec:reeb-vf}, we have that the Reeb vector field $R$ is vertical with respect to the torus fibration, and it is locally torus symmetric. Thus the Reeb flow translates each torus by some amount. If $R = R_1\partial_{\theta_1} + R_2\partial_{\theta_2}$ is the Reeb vector field and $\alpha = f(s)\,d\theta_1+ g(s)\,d\theta_2$ is the contact form, recall that $f'R_1+ g'R_2 = 0$ determines the direction of the Reeb field. Thus the torus $\pi^{-1}(s)$ is periodic whenever
\begin{equation}
  A'(s)^\perp = \left\{\varrho\in H_1(\pi^{-1}(s),\R) \mid \int_\varrho A'(s) = 0\right\} \subset H^1(\pi^{-1}(s),\R)
\end{equation}
is a rational subspace, meaning that it contains an integral vector. An equivalent condition for the torus $\pi^{-1}(s)$ to be periodic is that $R_1(s)$ and $R_2(s)$ satisfy a linear equation with rational coefficients. Each such periodic torus leads to a $T^2$ of simple (not multiply covered) Reeb orbits, and an $\N^+ \times T^2$ family of periodic Reeb orbits, where $\N^+$ keeps track of multiplicity.

Recall from section \ref{sec:affine-charts} that associated to each node $i$ of $D$ there is an integral cone $Q_i \subset H_1(V_i\setminus D,\Z)$, where $V_i$ is a neighborhood of the node $i$. We denote by $Q_{i,\R}$ the real version of this cone.

\begin{lemma}
  \label{lem:rational-directions}
For each $i$, there is an interval $s \in [s_{1,i},s_{2,i}]$ of values of the $s$ parameter, such that the space $A'(s)^\perp$ passes through each rational direction in $Q_{i,\R}$ exactly once. In this interval, the Reeb orbits represent homology classes in $Q_i$. Hence each primitive integral point in $Q_i$ corresponds to a periodic torus, and each integral point corresponds to a periodic torus and a particular multiplicity.
\end{lemma}

\begin{proof}
  This is a recasting of the description of the Reeb flow from Step 5
  above (\ref{sec:step5}) in the language of the affine manifold from
  Section \ref{sec:affine-man}.  At some point near each divisor, the Reeb flow coincides with the circle action rotating the normal circle to the divisor, and hence the
  line $A'(s)^\perp$ is spanned by the class of the normal circle to
  the divisor. For the divisor connecting node $i$ to node $i+1$, define $s_{2,i} = s_{1,i+1}$ to be the $s$-value where this occurs. 

A local analysis shows that the Reeb flow winds negatively around the divisor, so at $s_{2,i} = s_{1,i+1}$, the simple Reeb orbit represents the class $\Gamma_{z_i} \sim \Gamma_{w_{i+1}}$ considered in Section \ref{sec:affine-man}. These are the rays in the affine
  manifold. Between $s_{1,i}$ and $s_{2,i}$, we have the local model described in \ref{sec:step5}, where near each node the direction of translation on the
  torus fiber rotates between the normal circle directions of the two
  divisors. Thus the Reeb orbits appearing for $s \in [s_{1,i},s_{2,i}]$ are homologous to non-negative linear combinations of the classes $\Gamma_{w_i}$ and $\Gamma_{z_i}$, so these classes lie in $Q_i$. The local convexity condition means that the subspace
  $A'(s)^\perp$ rotates monotonically, so rational
  directions are never repeated.
\end{proof}

Recall from section \ref{sec:conventions} that when we complete $\overline{U}'$ along the boundary $\Sigma$, we get a manifold $M = \overline{U}' \cup_{\Sigma} \Sigma \times [0,\infty)$ with a cylindrical end. The Hamiltonian $H = (e^\rho)^2/2$ has periodic orbits that correspond to Reeb orbits, so that a periodic Reeb orbit of period $T$ corresponds to a periodic orbit of $X_H$ of period $1$ sitting in the hypersurface $e^\rho = T$. Thus we have
\begin{corollary}

  \label{cor:ham-orbits}
  The time-one-periodic orbits of the Hamiltonian $H = (e^\rho)^2/2$ in the cylindrical end form a disjoint union of tori, and these tori are in bijective correspondence with the integral points of the affine manifold $U^\trop(\Z)$.
\end{corollary}

We index the tori of periodic orbits by pairs $p = (s,r)$, where $s \in S$ is a point where the Reeb vector is rational, and $r \in \N^\times$ is a positive integer giving the multiplicity of iteration of the orbit. The torus of periodic orbits is called $T_{s,r}$ or $T_p$. See Figure \ref{fig:integral-points}.
\begin{figure}
  \centering
  \includegraphics[width=2in]{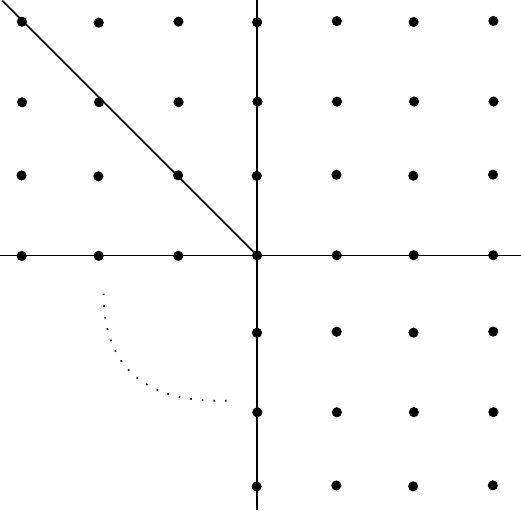}
  \caption{Integral points in the affine manifold. The axes and the diagonal ray bound quadrants corresponding to nodes.}
  \label{fig:integral-points}
\end{figure}

\subsubsection{Maslov and Conley--Zehnder indices}
\label{sec:cz-index}

Recall from Lemma \ref{lem:volume-form} that $U$ carries a holomorphic volume form $\Omega$ that is determined up to a constant multiple by the condition that it has simple poles on $D$. In the following, we always use this $\Omega$ to trivialize the canonical bundle of $U$.

\begin{proposition}
  \label{prop:maslov-tori}
  The Lagrangian tori $L \subset \Sigma \subset \overline{U}'$ near the boundary divisor (cf.~Proposition \ref{prop:stability-liouville-class}) have vanishing Maslov class in $Y \setminus D$. 
\end{proposition}

\begin{proof}

 First assume that $L$ is near a node, so that in a local coordinate chart $(z_1,z_2)$ near a node, $\Omega$ and $L$ have the form
  \begin{equation}
    \Omega = f(z)\frac{dz_1}{z_1}\wedge \frac{dz_2}{z_2}, \quad L = \{|z_1|=\epsilon_1,|z_2|=\epsilon_2\}
  \end{equation}
  where $f(z)$ is holomorphic and non-vanishing along $D$. Recall that the phase function $\arg(\Omega|_L): L \to \U(1)$ is defined to be the phase of $(\Omega|_L)/\nu$, where $\nu$ is a real volume form on $L$. In order to show that $L$ has vanishing Maslov class, it suffices to show that the phase function admits an $\R$-valued lift.

Take $\nu = d\theta_1\wedge d\theta_2$ as the volume form on $L$.
The phase of $L$ at $(z_1,z_2)=(\epsilon_1e^{i\theta_1},\epsilon_2e^{i\theta_2})$ is that of
  \begin{equation}
    \Omega|_L = f(z_1,z_2)(i\,d\theta_1)\wedge(i\,d\theta_2) = -f(z_1,z_2)\,\nu
  \end{equation}
  Since the function $f$ is non-vanishing for small $(z_1,z_2)$, the argument function $\arg (f|_L): L \to \U(1)$ admits a lift to $\R$, and so does $\arg (\Omega|_L)$.


Since all the tori $L$ are Lagrangian isotopic to a torus near a node, they all also have vanishing Maslov class.
\end{proof}

Now we will formulate the analogous spectral sequence to the one described in Proposition \ref{prop:c-star-spectral-sequence}. 

\begin{proposition}
  \label{prop:perturbation-indices}
  A small perturbation of the Hamiltonian breaks the torus $T_p$ of periodic orbits into several non-degenerate orbits. There are at least $\binom{2}{k}$ orbits of Conley--Zehnder index $k$, for $k = 0, 1, 2$. In the neck-stretching limit, the Floer trajectories connecting these orbits stay in a neighborhood of the torus and yield a differential $d_0$ whose cohomology can be identified with the ordinary cohomology of the torus $H^*(T_p)$. Summing over all $p$, the cohomology of $d_0$ is the $E_1$-page of a spectral sequence converging to $SH^*(U)$.
\begin{equation}
  E_1^* = H^*(U) \oplus \left(\bigoplus_{p \in U^\trop(\Z)\setminus\{0\}} H^*(T_p)\right)
\end{equation}
\end{proposition}

\begin{proof}
  All of the statements to be proved are local near the Reeb periodic torus $L$ underlying the torus of periodic orbits $T_p$. Just as in the case of the complex torus $(\C^\times)^2$, we assume that the non-autonomous term is supported in a union of small pairwise disjoint neighborhoods of the periodic tori; this is possible since the tori are not arbitrarily close to one another. In fact, locally each torus of periodic orbits looks like one in the case of $(\C^\times)^2$, so we can use that case as a local model to understand the perturbation, and in particular the degrees of the generators. Just as in the proof of Proposition \ref{prop:c-star-spectral-sequence}, we can arrange our perturbations so that the orbits created by perturbation of the Hamiltonian stay near the torus $T_p$, and so that the Floer trajectories connecting two orbits $\gamma_+$ and $\gamma_-$ coming from the perturbation of $T_p$ lie in a neighborhood of $T_p$.

The statement about Conley-Zehnder indices necessarily holds up to an overall shift, since the differences between the Conley-Zehnder indices does not depend on the choice of trivialization of the canonical bundle. To fix the overall shift, we argue by comparison with the $(\C^\times)^2$ case.

To see that the homotopy classes of trivializations of the canonical bundle match, note that in both $U$ and $(\C^\times)^2$, the Lagrangian torus $L$ on which the orbits lie has Maslov class zero. Since $L$ carries all the topology of a neighborhood of $L$, we find that the trivialization of the canonical bundle of this neighborhood is determined by the Maslov class of $L$. Since these Maslov classes both vanish, the corresponding trivializations match.

Since our Liouville class is locally convex, we find that there is a Liouville structure with locally convex Liouville class in the $(\C^\times)^2$ case containing a periodic torus whose neighborhood is isomorphic to a neighborhood of our given torus in $U$. Since, in the $(\C^\times)^2$ case, local convexity implies that the spectral sequence degenerates at $E_1$, the local contribution to the $E_1$ page from this torus must match with the cohomology in the $(\C^\times)^2$ case, showing that the degrees must match as stated.
\end{proof}

\begin{remark}
  If we did not assume that the Liouville class were locally convex, we would find that some periodic tori would contribute shifted copies of $H^*(T^2)$ to the $E_1$ page. This can already be seen in the $(\C^\times)^2$ case, where if the Liouville class is not locally convex, there are multiple periodic tori containing homologous Reeb orbits, and there must be higher differentials connecting them in order for the Viterbo isomorphism to hold.
\end{remark}

The higher differentials in the spectral sequence count inhomogeneous pseudo-holomorphic curves connecting different critical manifolds, and these are analyzed in Section \ref{sec:compute-differential}. Define $\theta_0 = 1 \in H^0(U)$ and $\theta_p = PD[T_p] \in H^0(T_p)$ for $p \in U^\trop(\Z)\setminus\{0\}$. We now reformulate Theorem \ref{thm:main}.
\begin{theorem}
  \label{thm:degeneration}
  The degree 0 part of the spectral sequence degenerates at $E_1$, so 
  \begin{equation}
    SH^0(U) \cong E_1^0 = \Span\{\theta_p \mid p \in U^\trop(\Z)\}
  \end{equation}
\end{theorem}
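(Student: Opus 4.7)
The plan is to show that each $\theta_p$ lifts to a genuine cocycle in the full Floer cochain complex $CF^*(H)$. Since the $E_1$-page of the spectral sequence has nothing in total degree $-1$, degeneration at $E_1^0$ reduces precisely to this cocycle statement together with linear independence (which is automatic once degeneration holds). The class $\theta_0 = 1 \in H^0(U)$ is the image of the unit under $H^*(U) \to SH^*(U)$ and is automatically closed. For $p \neq 0$, the local action-preserving component of $d$ stays near $T_p$ and only moves generators within $H^*(T_p)$; since $\theta_p = \mathrm{PD}[T_p]$ is the top class it is annihilated by this piece. The whole task is therefore to rule out action-decreasing Floer cylinders whose image exits a neighborhood of $T_p$.

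The main tool will be a neck-stretching argument along the contact boundary $\Sigma$, following Bourgeois--Oancea \cite{bo-exact-sequence}. After stretching, such a Floer cylinder degenerates to a building consisting of a Floer cap in $\overline{U}'$ together with a pseudo-holomorphic building in the symplectization $\Sigma \times \R$, joined along Reeb orbits. Proposition \ref{prop:positivity-contact-target} gives $f^*d\alpha \geq 0$ pointwise on each symplectization piece, and for a cylindrical piece whose ends sit over $s_\pm \in S$, Stokes' theorem identifies the total $d\alpha$-area with the difference $\int_\varrho A(s_+) - \int_\varrho A(s_-)$ of period integrals of the Liouville class against the homology class $\varrho$ of the asymptotic orbits. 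Lemma \ref{lem:convex-maximum-property}, the precise payoff of Step 5, says that such a period integral attains a strict local maximum at the periodic slice; a non-trivial cylinder emanating from a periodic slice therefore has strictly negative $d\alpha$-area, a contradiction. For buildings with branched covers, multiple levels, or non-cylindrical topological types, I would adapt the Bourgeois--Colin \cite{bourgeois-colin} analysis of holomorphic curves in $T^2$-fibered contact manifolds to our $\Sigma$, which is itself a $T^2$-bundle over $S^1$ with exactly the required invariance.

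To convert this geometric emptiness statement into a cocycle statement for $\theta_p$, I would use the BV-operator trick suggested by Seidel. Pick a class $\eta_p \in H^1(T_p)$ dual to the loop direction of the Reeb orbit underlying $T_p$; by the loop-space identification of section \ref{sec:complex-torus}, $\Delta \eta_p = c\,\theta_p$ for some non-zero $c$. Since $\Delta$ is a chain map, $d\theta_p = c^{-1}\Delta\, d\eta_p$, so it suffices to prove $d\eta_p = 0$, which is a more tractable target for the symplectization method because representing $\eta_p$ by a perturbed Morse-critical orbit of definite Conley--Zehnder index tightens the action and index constraints on putative target orbits. The main obstacle I foresee is the middle step: classifying symplectization buildings carefully enough that the convexity-based area obstruction really forces emptiness of all relevant moduli spaces, in particular those whose ends lie on nearby non-periodic slices of the Reeb foliation, where the local $T^2$-symmetry must be used to reduce dimension before applying the sharp maximum statement of Lemma \ref{lem:convex-maximum-property}.
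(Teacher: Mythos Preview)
Your proposal assembles the correct ingredients---neck-stretching \`a la Bourgeois--Oancea, the convexity Lemma \ref{lem:convex-maximum-property} via Bourgeois--Colin, and Seidel's BV trick---but there are two genuine gaps in how they are combined.

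\textbf{The BV reduction does not land where you say it does.} You correctly derive $d\theta_p = \pm c^{-1}\Delta(d\eta_p)$, but then claim it suffices to prove $d\eta_p = 0$. This is too strong and in general false: the neck-stretching argument only shows that no Floer cylinder connects $\eta_p$ to a \emph{boundary} generator (one coming from another periodic Reeb torus). It says nothing about cylinders ending on \emph{interior} generators, those coming from $H^*(U)$. Since $\eta_p$ has degree $1$ and $H^2(U)$ is typically nonzero, $d\eta_p$ may well have nonzero interior terms. The paper does not attempt to prove $d\eta_p = 0$; instead, having shown $d\eta_p$ consists only of interior generators, it invokes the standard fact that $\Delta$ vanishes on the image of $H^*(U) \to SH^*(U)$. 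That kills $\Delta(d\eta_p)$ and hence $d\theta_p$. Likewise $\Delta(\eta_p)$ is not simply $c\,\theta_p$ but $\theta_p + (\text{interior terms})$, and one must separately observe that those interior terms are closed in degree zero.

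\textbf{The argument only works for primitive $p$.} Your proposal treats all $\theta_p$ uniformly, but the neck-stretching nonexistence result (Proposition \ref{prop:neck-stretching} in the paper) requires that the two asymptotic orbits are \emph{not iterates of the same periodic Reeb torus}. If $p$ is a non-primitive integral point, say $p = r \cdot p_0$, then $\theta_p$ and the generators at $T_{p_0}, T_{2p_0}, \dots$ all project to the same Reeb torus in $\Sigma$, and the limiting building may consist of branched covers of trivial cylinders, which Theorem \ref{thm:no-hol-cylinders} explicitly allows. Your remark that you would ``adapt Bourgeois--Colin'' for branched covers is exactly the step that cannot be carried out by symplectization methods alone. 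The paper's solution is to establish $d\theta_p = 0$ first only for primitive $p$, then show (Proposition \ref{prop:prim-generate}) that every generator is a pair-of-pants product of primitive ones, and conclude by the Leibniz rule for $d$. This use of the product structure is essential and absent from your outline.
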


The preceding results are formulated in terms of a Hamiltonian that is quadratic at infinity. If we use a Hamiltonian $H^m$ that is linear at infinity, analogous results hold for the Floer homology $HF^*(H^m)$. The difference is that only the periodic tori whose Reeb length $\ell_p = \int_{\gamma_p} \alpha$ satisfies $\ell_p < m$ contribute to $HF^*(H^m)$. There is a spectral sequence converging to $HF^*(H^m)$ whose $E_1$ page is
\begin{equation}
  E_1^{*} = H^*(U) \oplus \left(\bigoplus_{p, \ell_p < m} H^*(T_p) \right)
\end{equation}
\begin{theorem}
\label{thm:degeneration-linear}
  The degree 0 part of the spectral sequence for $HF^*(H^m)$ degenerates at $E_1$, so
  \begin{equation}
    HF^0(H^m) \cong E_1^0 = \Span\{\theta_p \mid p \in U^\trop(\Z), \ell_p < m\}
  \end{equation}
\end{theorem}

For the rest of the computation, we will work with the linear Hamiltonians. This is because the computation of $HF^0(H^m)$ requires us to choose perturbations with favorable properties, and it is not necessarily possible to choose these perturbations in a way that works for the entire cochain complex associated to the quadratic Hamiltonian. Furthermore, as the asymptotic slope $m$ increases, new generators are added to the complex, and we may need to modify the perturbations in order to accommodate these new generators into the method used to compute the differential.

This choice to work with linear Hamiltonians then requires us to analyze the continuation maps $HF^*(H^m) \to HF^*(H^{m'})$ for $m \leq m'$. We show in the last subsection that these maps are injective on $HF^0$, and therefore that, in the colimit used to construct $SH^0(U)$ from $HF^0(H^m)$, all classes at each stage survive to the limit, finishing the proof of Theorem \ref{thm:degeneration}.

\subsection{Computation of differentials}
\label{sec:compute-differential}

The purpose of this section is to prove that the higher differentials in the spectral sequence vanish on any degree zero element $\theta \in E_1^0$.

Let $\theta$ be a degree zero generator corresponding to the fundamental class of a periodic torus of Reeb orbits.  A boundary generator is one coming from the periodic Reeb orbits in the contact boundary, while an interior generator is one coming from the $H^*(U)$ component of the $E_1$ page. 

The strategy for proving $d\theta = 0$ has several ingredients. We use the way that the differential $d$ interacts with the BV operator $\Delta$ and the product, namely, that $\Delta$ is a chain map and $d$ is a derivation of the product. We also combine this with the nonexistence results for Floer cylinders in section \ref{sec:neck-stretching}. All of these results in this section therefore require that the structures be sufficiently close to the neck-stretching limit. Furthermore, we use some aspects of the computation of symplectic cohomology for the complex torus $(\C^\times)^2$. We described the BV operator and product in section \ref{sec:complex-torus}. The general principle here is that the structures of the operations near a single periodic torus are the same as those in the case of $(\C^\times)^2$, as long as we only consider the contributions of inhomogeneous pseudo-holomorphic curves that stay near that torus (the ``low-energy'' contributions). Of course there may be other contributions to these operations coming from holomorphic curves that extend outside of this neighborhood, but they will have higher energy. 



For $p \in U^\trop(\Z) \setminus 0$, we denote by $T_p$ the corresponding torus of orbits for the unperturbed Hamiltonian $H$. We denote by $CF^*(H)_p$ and $E_1^*(H)_p$ the corresponding components of the Floer cochain complex and the $E_1$ page respectively. We denote by $d_0$ the low-energy component of the differential; recall that $E_1(H)_p$ is the cohomology of $CF(H)_p$ with respect to $d_0$. 

\begin{remark}
  Since the Morse-Bott manifolds of orbits $T_p$ are all tori, and the torus admits a perfect Morse function, one could actually assume that the perturbations are made in such a way that $d_0$ vanishes on $CF^*(H)_p$ for $p \in U^\trop(\Z) \setminus 0$. In the arguments that follow, we have not assumed this so as to make the structure of the argument clearer.
\end{remark}

\begin{proposition}
  \label{prop:prim-boundary-closed}
  Suppose $p$ is primitive. For any $\zeta \in CF^*(H)_p$, we have 
  \begin{equation}
    d\zeta = d_0\zeta + (\text{interior generators}).
  \end{equation}
  
\end{proposition}

\begin{proof}
  We must show the matrix element of the differential connecting $\zeta$ to any other boundary generator $\beta$ not counted in $d_0\zeta$ is zero. This matrix element is a count of inhomogeneous pseudo-holomorphic cylinders connecting $\zeta$ at the positive puncture to $\beta$ at the negative puncture. By Proposition \ref{prop:neck-stretching}, this moduli space of cylinders will eventually be empty in the neck-stretching limit unless $\beta$ corresponds to an iterate of the same periodic torus and $\Mult(\beta) \leq  \Mult(\zeta)$. Since  $\Mult(\zeta) = 1$, we find that $\beta$ must correspond to the same torus as $\zeta$. Then, since $\zeta$ and $\beta$ have approximately the same action, any differentials connecting them would have low energy, and are counted in $d_0\zeta$.
\end{proof}


\begin{proposition}
  \label{prop:bv-exactness}
  Suppose $p$ is primitive. Let $\theta \in E_1^0(H)_p$. Then there is a lift $\tilde{\theta} \in CF^0(H)_p$ and a element $\eta \in CF^1(H)_p$ such that $d_0\eta = 0$ and 
  \begin{equation}
    \Delta(\eta) = \tilde{\theta} + (\text{interior generators}).
  \end{equation}
\end{proposition}

\begin{proof}
  We know that for any $\eta \in CF^1(H)_p$, $\Delta(\eta)$ will be of the form $\beta + (\text{interior generators})$ for some $\beta \in CF^0(H)_p$. Indeed, Proposition \ref{prop:neck-stretching} says that in the neck-stretching limit all cylinders contributing to $\Delta(\eta)$ end on generators corresponding to iterates of the same periodic torus as $\eta$, of less or equal multiplicity. Since $\eta$ is primitive, the only possible terms in $\Delta(\eta)$ are $\beta \in CF^0(H)_p$ and interior generators. 

It remains to show that we can arrange things so that $\eta$ is $d_0$-closed and $\Delta(\eta)$ contains $\tilde{\theta}$ that projects to $\theta \in E_1^0(H)_p$.

Let $\theta \in E_1^0(H)_p$ be given. Since that space is one dimensional, it will suffice to prove the proposition for the basis element, which is the Poincar\'{e} dual of the fundamental class of $T_p$.

  Let $T_p \subset \cL M$ be the torus of parametrized orbits corresponding to $p$. There is a circle action on $T_p$ given by rotating the parametrization of the the orbit. Denoting the class of the orbit of this circle action by $a \in H_1(T_p;\Z)$, let $b \in H_1(T_p;\Z)$ be a class that is dual to $a$. Let $\bar{\eta} \in H^1(T_p) \subset E_1^1(H)_p$ denote the Poincar\'{e} dual of $b$, and let $\eta \in CF^1(H)_p$ denote a lift of $\bar{\eta}$. Note that $\eta$ necessarily satisfies $d_0\eta = 0$.
  
We know that $\Delta(\eta) = \beta + (\text{interior generators})$ for some $\beta \in CF^0(H)_p$, and we need to show that this $\beta$ is a lift of $\theta$. By the same argument as in the proof of Proposition \ref{prop:cstar-bv-product}, all of the curves that contribute to the $\beta$ term remain within a small neighborhood $V$ of the torus $T_p$. 

Now we can argue by comparison with the $(\C^\times)^2$ case. We can set up the computation in the $(\C^\times)^2$ case so that it contains a neighborhood where all of the structures match those in $V$. The BV operator in this case was described in Section \ref{sec:complex-torus}. The Viterbo isomorphism and the fact that the spectral sequence degenerates at $E_1$ determine this operation: $\Delta(\bar{\eta})$ is the class Poincar\'{e} dual to the class swept out by $b$ under the circle action. This is the fundamental class of the torus, that is, $\theta$. Lifting to the chain level, we find that $\Delta(\eta) = \tilde{\theta}$, where $\tilde{\theta}$ is a lift of $\theta$.

\end{proof}

\begin{proposition}
  \label{prop:prim-closed}
  Suppose $p$ is primitive. Let $\theta \in E_1^0(H)_p$ be a primitive degree zero generator. Then there is a lift $\tilde{\theta} \in CF^0(H)_p$ such that $d\tilde{\theta} = d(\text{interior generators})$. Therefore $\theta$ is closed for all higher differentials in the spectral sequence.
\end{proposition}

\begin{proof}
  Given $\theta$, Proposition \ref{prop:bv-exactness} gives us $\eta$ and $\tilde{\theta}$ such that $\Delta(\eta) = \tilde{\theta} + x$, where $x$ is an interior cochain. 
  Using the fact that $\Delta \circ d + d \circ \Delta = 0$ (as $\Delta$ is an odd chain map), we obtain
  \begin{equation}
    -\Delta(d\eta) = d\Delta(\eta) = d\tilde{\theta} + dx.
  \end{equation}
  On the other hand, by Proposition \ref{prop:prim-boundary-closed}, $d\eta = d_0\eta + (\text{interior generators})$; since $d_0 \eta = 0$, we find $d\eta = y$, where $y$ is an interior cochain. Thus we obtain
  \begin{equation}
    -\Delta(y) = d\tilde{\theta} + dx.
  \end{equation}
  Now we use the fact that $\Delta$ vanishes on the cohomology of the interior. At chain level, this says that $\Delta(y) = dz$ for some interior cochain $z$.
  Thus
  \begin{equation}
    -dz =  d\tilde{\theta} + dx
  \end{equation}
  and so $d\tilde{\theta} = -d(x+z)$ as claimed.

  For the last claim, recall that the higher differentials in the spectral sequence are just the original differential restricted to certain sub-quotients of the complex. But at any page after the first, $d(x+z) = d_0(x+z)$ is identified with zero.
\end{proof}

\begin{remark}
  There is another class in the image of the BV operator, call it $\zeta \in H^1(T_p)$, which is the image of the Poincar\'{e} dual of the point class in $H^2(T_p)$. Geometrically it represents a single orbit of the circle action. The conclusion of proposition \ref{prop:prim-closed} also holds for the class $\zeta$ corresponding to a torus of primitive orbits.
\end{remark}


\begin{proposition}
  \label{prop:prim-generate}
  Fix $p \in U^\trop(\Z)$ primitive. For $r > 0$, denote by $\theta_{rp} \in E_1^0(H'')_{rp}$ the Poincar\'{e} dual of the fundamental class of $T_{rp}$. Then $\theta_{rp}$ admits a lift $\tilde{\theta}_{rp} \in CF^0(H'')_p$, constructed by induction on $r$, such that, if $*$ denotes the product $CF^0(H) \otimes CF^0(H') \to CF^0(H'')$ (where the asymptotic slope of $H''$ is at least the sum of the asymptotic slopes of $H$ and $H'$), we have
  \begin{equation}
    \tilde{\theta}_{rp} = \tilde{\theta}_p * \tilde{\theta}_{(r-1)p} + X
  \end{equation}
  where $X \in CF^0(H'')_0 \oplus \bigoplus_{s=1}^{r-1} CF^0(H'')_{sp}$ is a sum of interior generators and generators corresponding to iterates of $p$ of multiplicity $< r$, and $\tilde{\theta}_{p}$ and $\tilde{\theta}_{(r-1)p}$ are the lifts constructed earlier in the induction.
\end{proposition}

\begin{proof}
  For fixed $p$, we construct the lifts by induction on $r$. For $r = 1$, we take the lift provided by Proposition \ref{prop:prim-closed}. Suppose that $\tilde{\theta}_{sp}$ has been constructed for all $s < r$. We now consider what terms may appear in the product $\tilde{\theta}_p * \tilde{\theta}_{(r-1)p}$. It follows immediately from Proposition \ref{prop:neck-stretch-product} that in the neck-stretching limit $\tilde{\theta}_p * \tilde{\theta}_{(r-1)p}$ can only contain terms from $CF^*(H'')_0 \oplus \bigoplus_{s=1}^r CF^0(H'')_{rp}$, that is interior generators and generators corresponding to iterates of $p$ up to multiplicity $r$. We define $\tilde{\theta}_{rp}$ to be the component of $\tilde{\theta}_p * \tilde{\theta}_{(r-1)p}$ that sits in $CF^0(H'')_{rp}$. It remains to show that $\tilde{\theta}_{rp}$ is a lift of $\theta_{rp} \in E_1^0(H'')$.

For this we use comparison with the $(\C^\times)^2$ case. Proposition \ref{prop:neck-stretch-product} shows us that all curves contributing to the $rp$-component of $\tilde{\theta}_p * \tilde{\theta}_{(r-1)p}$ remain in the end; in fact, since these curves approach trivial cylinders, the computation localizes to a neighborhood of $T \times \R$ in the symplectization, where $T \subset \Sigma$ is the periodic torus corresponding to $p$. We may set up the corresponding computation in the $(\C^\times)^2$ case so that it contains a subset where all of the structures match those in this neighborhood. Then, since the spectral sequence degenerates at $E_1$ in the $(\C^\times)^2$ case, the Viterbo isomorphism tells us that $\theta_p* \theta_{(r-1)p} = \theta_{rp}$ in the $(\C^\times)^2$ case. This means that $\tilde{\theta}_p * \tilde{\theta}_{(r-1)p}$ is a lift of $\theta_{rp}$ in the $(\C^\times)^2$ case. Transporting this back to $U$, we have the result.
\end{proof}

\begin{proposition}
\label{prop:all-closed}
  Let $\theta_{rp} \in E_1^0(H)_{rp}$, where $p$ is primitive and $r > 0$. Then there is a lift $\tilde{\theta}_{rp}$ such that $d\tilde{\theta}_{rp} = dX$, where $X$ is a sum of interior generators and generators corresponding to iterates of $p$ of multiplicity $< r$. Therefore $\theta_{rp}$ is closed for all higher differentials in the spectral sequence.
\end{proposition}

\begin{proof}
  We take $\tilde{\theta}_{rp}$ as provided by Proposition \ref{prop:prim-generate}. We proceed by induction on $r$. For $r = 1$, this is Proposition \ref{prop:prim-closed}. Suppose that for all $s < r$, $\tilde{\theta}_{sp}$ has the property that $d\tilde{\theta}_{sp} = dX_s$, where $X_s$ is a sum of interior generators and generators of multiplicity $< s$. Then we have
  \begin{equation}
    \tilde{\theta}_{rp} = \tilde{\theta}_p * \tilde{\theta}_{(r-1)p} + Y
  \end{equation}
  where all terms in $Y$ have multiplicity $< r$. Apply $d$ to this equation and use the fact that it is a derivation of the product to obtain
  \begin{equation}
    d\tilde{\theta}_{rp} = dX_1 * \tilde{\theta}_{(r-1)p} + \tilde{\theta}_p * dX_{r-1} + dY
  \end{equation}
  We see that all terms on the right-hand side correspond either to interior generators or generators of multiplicity $< r$, using Propositions \ref{prop:neck-stretching} and \ref{prop:neck-stretch-product} and Remark \ref{rem:interior-gen-product}.

Now we use the fact that multiplicity corresponds to action, namely that higher iterates have more negative action. Thus what we have shown is that $\tilde{\theta}_{rp}$ satisfies $d\tilde{\theta}_{rp} = d(\text{higher action terms})$. This means that, by the time we reach the page where the differential of $\theta_{rp}$ might land, its value has already been killed by the higher action terms.
\end{proof}

Since the elements $\theta_{rp}$ for $p$ primitive and $r > 0$, together with $1 \in H^0(U)$, span $E_1^0$, Proposition \ref{prop:all-closed} shows that all elements of $E_1^0$ are closed for all higher differentials in the spectral sequence. Since the $E_1$ page has nothing in negative degrees, we see that that the degree zero part of the spectral sequence degenerates at $E_1$. This completes the proof of Theorem \ref{thm:degeneration-linear}. 

\subsection{Continuation maps}
\label{sec:continuation-maps}

In order to complete the proof of Theorem \ref{thm:degeneration}, the last aspect we need to address to tie the calculation together is the continuation maps relating the various Floer cohomology groups used in the definition of symplectic cohomology. Recall from section \ref{sec:floer-cohomology}, one way to define symplectic cohomology is using Hamiltonian functions that are linear at infinity. A Hamiltonian with slope $m$ is denoted by $H^m$. The prohibited values for $m$ are the lengths of the periodic Reeb orbits in the contact hypersurface $\Sigma$, but otherwise we get a Floer cohomology group $HF^*(H^m)$. There are continuation maps $HF^*(H^m) \to HF^*(H^{m'})$ when $m \leq m'$. These maps are isomorphisms when $m = m'$, even if two different Hamiltonians of the same asymptotic slope are used to define the source and target spaces. The symplectic cohomology is defined as the limit of the directed system constructed from the continuation maps:
\begin{equation}
  SH^*(U) \cong \lim_{m\to\infty} HF^*(H^m)
\end{equation}

The significance of the continuation maps is that, in the computations of the various groups $HF^*(H^m)$, we must choose perturbations of the Hamiltonian and other structures, and the continuation maps express in a canonical way the invariance of $HF^*(H^m)$, or more precisely, its dependence solely on the asymptotic slope $m$. Furthermore, as we increase this asymptotic slope, we may need to modify the perturbations used to define the various generators $\theta_p$, and the continuation map from slope $m$ to slope $m'$ expresses in a canonical way how the generator $\theta_p$ defined in $HF^*(H^m)$ is related to the generator with the same name in $HF^*(H^{m'})$. 

\begin{proposition}
  \label{prop:continuation-maps}
  Let $m \leq m'$. The continuation map $\phi: HF^0(H^m) \to
  HF^0(H^{m'})$ is injective. For $p \in U^\trop(\Z)$, let
  $\theta_p^m$ and $\theta_p^{m'}$ be the degree zero generators in
  $HF^0(H^m)$ and $HF^0(H^{m'})$ corresponding to the same
  torus of periodic orbits $T_p$. There a choice of Hamiltonians that
  ensures that $\phi(\theta_p^m) = \theta_p^{m'} + \text{(higher action
    terms)}$
\end{proposition}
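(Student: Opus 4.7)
The plan is to deduce injectivity from triangularity with respect to the action filtration. Specifically, I would show that in the basis $\{\theta_p^{m'}\}$ of $CF^0(H^{m'})$ ordered by action, the matrix of $\phi$ is triangular with diagonal entry equal to $1$ on each $\theta_p^m$. Given such an arrangement, any nonzero element of $\ker \phi$ leads to a contradiction by looking at its extremal-action component. The triangularity itself follows from the standard topological energy identity for solutions $u$ of the continuation Floer equation with a monotone interpolating family $K_s$: the identity expresses the non-negative energy of $u$ as the action difference of its asymptotics plus $\iint \partial_s K_s \, ds \, dt$, which for a monotone homotopy forces the asymptotics into a definite order.

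The heart of the proof is therefore the computation of the diagonal entry, for which I would run a neck-stretching argument directly parallel to Proposition \ref{prop:neck-stretching}. A continuation trajectory with asymptotics $\theta_p^m$ and $\theta_p^{m'}$, under stretching the neck along $\Sigma$, degenerates to a broken configuration whose intermediate levels sit in the symplectization $\Sigma \times \R$. Provided the continuation data is arranged so that the $s$-dependence lies outside the neck, the Hamiltonian on these intermediate levels is autonomous and depends only on $\rho$, so by Corollary \ref{cor:no-hol-cylinders-in-symplectization} each intermediate component is a trivial cylinder. Furthermore, even the outer level carrying the continuation data still projects holomorphically to $\Sigma$, because the continuation Hamiltonian depends only on $\rho$ and not on the angular coordinates along the torus fibers; the argument from the proof of Theorem \ref{thm:no-hol-cylinders} then forces its projection to $S$ to be constant. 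The entire configuration is thus localized over the single point $s_p \in S$ corresponding to $T_p$. In a neighborhood of $T_p$ the contact geometry is modeled by a Lagrangian torus fibration in $(\C^\times)^2$, and the computation of Section \ref{sec:complex-torus} identifies the continuation map on low-energy contributions with that of $(\C^\times)^2$ between linear Hamiltonians of different slopes, which is the identity on the canonical degree zero generator of each periodic torus. Normalizing the perturbations that define $\theta_p^m$ and $\theta_p^{m'}$ compatibly---the ``choice of Hamiltonians'' in the statement---then yields diagonal coefficient $1$.

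The main obstacle is justifying the neck-stretching compactness in the presence of a continuation homotopy: the standard SFT compactness statements are formulated for autonomous (or at most $t$-dependent) Hamiltonians, so one must arrange the continuation data to keep its $s$-dependence away from the stretched region. Concretely, $K_s$ should perform the interpolation between $H^m$ and $H^{m'}$ in a region of $\rho$ that lies above or below the neck being stretched, so that inside the neck itself the Floer equation is autonomous and the nonexistence results of Section \ref{sec:holomorphic-cylinders} apply verbatim to the intermediate symplectization levels. A secondary subtlety is to ensure that the local $(\C^\times)^2$-comparison produces diagonal coefficient exactly $1$ rather than some other nonzero constant; this reduces to a careful normalization of the perturbations so that $\theta_p^m$ and $\theta_p^{m'}$ correspond to Poincar\'{e} duals of the fundamental classes of $T_p$ on each side, in the sense of Theorem \ref{thm:degeneration}.
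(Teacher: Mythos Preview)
Your proposal is correct and shares the overall architecture of the paper's proof: triangularity with respect to the action filtration reduces injectivity to showing the diagonal entries equal $1$, and the diagonal entry is obtained by localizing near $T_p$ and comparing with $(\C^\times)^2$.

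The paper's route to localization, however, is considerably more direct than yours. Rather than invoking neck-stretching, the paper exploits the freedom in the phrase ``there is a choice of Hamiltonians'' and specifies a concrete one: both $H^m$ and $H^{m'}$ are taken to coincide with a single quadratic Hamiltonian $H^Q = h(e^\rho)$ up to the level $e^\rho = (h')^{-1}(m)$, after which $H^m$ continues linearly with slope $m$ while $H^{m'}$ remains quadratic out to $e^\rho = (h')^{-1}(m')$. With this choice the periodic tori for $H^m$ are literally a subset of those for $H^{m'}$, sitting at identical locations with identical actions. A continuation cylinder contributing to the diagonal matrix element therefore has (essentially) zero action difference, hence small energy, and is automatically confined to a neighborhood of $T_p$ where the $(\C^\times)^2$ model applies. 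No broken configurations arise, there is no need to position the $s$-dependence of the homotopy relative to a stretched region, and Corollary~\ref{cor:no-hol-cylinders-in-symplectization} is never invoked.

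Your neck-stretching argument would also work and is in a sense more robust, since it does not rely on this particular alignment of Hamiltonians. But it imports exactly the technical overhead you identify as the ``main obstacle''---adapting SFT compactness to $s$-dependent continuation data---whereas the paper's Hamiltonian choice sidesteps that obstacle entirely.
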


\begin{proof}
  First observe that, by an action filtration argument, the claim concerning $\phi(\theta_p^m)$ implies the claim that $\phi$ is an embedding. Indeed, using the elements $\theta_p^m$ and $\theta_p^{m'}$, ordered by decreasing levels of action, as basis of the source and target spaces, we find that the matrix of $\phi$ is upper triangular (a fact which is true for continuation maps in all situations). The claim of the proposition amounts to saying that the matrix elements along the diagonal are equal to one, so that $\phi$ is an embedding.
  
Up to this point we have not been particularly specific about our choice of Hamiltonians, since it is largely immaterial, but here we will make a more specific choice. Assume that both $H^m$ and $H^{m'}$ approximate the same quadratic Hamiltonian. By this we mean that $H^m$ is a piecewise polynomial function of $e^\rho$, with a quadratic piece and a linear piece: it is equal to the quadratic $H^Q = h(e^\rho)$ up to the point where $e^\rho = (h')^{-1}(m)$, and it is linear of slope $m$ afterward. As a result the function is $C^1$. The Hamiltonian $H^{m'}$ is similar, but, it stays quadratic over a longer interval up to when $e^\rho = (h')^{-1}(m')$. As a result, Hamiltonian periodic tori for $H^m$ are precisely a subset of those for $H^{m'}$, and the actions of these periodic tori are the same when computed using either Hamiltonian. Therefore, after perturbation, the actions of $\theta_p^m$ and $\theta_p^{m'}$ are approximately equal. Since the continuation map always increases action, we find that $\phi(\theta_p^m)$ contains a possible contribution from $\theta_p^{m'}$, and all other terms have higher action.

It remains to justify that $\theta_p^{m'}$ has a nonzero coefficient in $\phi(\theta_p^m)$. When computing $HF^0(H^m)$ and $HF^0(H^{m'})$, the Hamiltonians are perturbed in potentially different ways. Nevertheless, the orbits $\theta_p^m$ and $\theta_p^{m'}$ remain close to each other, and the cylinders connecting them have small energy. 


We can relate the desired computation to the case of $(\C^\times)^2$. One can set up the computation in that case so that there is a small neighborhood $\tilde{V}$ where all of the structures (Liouville form, Hamiltonians, complex structures) match with the structures in $V$. Denote the corresponding generators in the $(\C^\times)^2$ case by $\tilde{\theta}_p^m$ and $\tilde{\theta}_p^{m'}$. Since the Liouville class is locally convex, the spectral sequence degenerates at $E_1$ for both $HF((\C^\times)^2,H^m)$ and $HF((\C^\times)^2,H^{m'})$. Hence $\tilde{\theta}_p^m$ and $\tilde{\theta}_p^{m'}$ can be canonically identified with their images in cohomology.

We need to show that the generator $\theta_p^{m'}$ appears in $\phi(\theta_p^m)$ with coefficient 1. By Proposition \ref{prop:neck-stretching}, near the neck-stretching limit the cylinders contributing to this map are close to trivial and hence remain in a small neighborhood of the periodic torus $T_p$. This local computation is the same in $U$ and in $(\C^\times)^2$. Since the generators of the form $\theta_p^m$ and $\theta_p^{m'}$ represent the same class in $SH^0((\C^\times)^2) \cong H_2(\cL T^2)$, this matrix element of this continuation map in the case of $(\C^\times)^2$ must be one.
\end{proof}

The same idea is also used to relate the Floer cohomologies $HF^0(H^m)$ of Hamiltonians with linear growth to the Floer cohomology $HF^0(H^Q)$ of a Hamiltonian with quadratic growth. The flow of the quadratic Hamiltonian $H^Q$ creates at once  all of the periodic orbits we need to consider, and after perturbation we get degree-zero generators $\theta_p^Q \in CF^0(H^Q)$ corresponding to the torus of periodic orbits $T_p$. There are continuation maps $\phi: CF^0(H^m) \to CF^0(H^Q)$ from the finite slope version to the quadratic version. 


\begin{proposition}
  For any $m$, the continuation map $\phi: HF^0(H^m) \to HF^0(H^Q)$ is injective. If $\theta_p^m \in E_1^0(H^m)_p$ and $\theta_p^Q \in E_1^0(H^Q)_p$ are the classes corresponding to a torus of periodic orbits $T_p$, then there are lifts $\tilde{\theta}_p^m \in CF^0(H^m)_p$ and $\tilde{\theta}_p^Q \in CF^0(H^Q)_p$ and a choice of Hamiltonians that ensures $\phi(\tilde{\theta}_p^m) = \tilde{\theta}_p^Q + \text{(higher action terms)}$, and $\theta_p^Q$ is closed for all higher differentials in the spectral sequence.
\end{proposition}

\begin{proof}
  Let $\tilde{\theta}_p^m$ be the lift whose existence is guaranteed by Proposition \ref{prop:all-closed}. The same argument as in Proposition \ref{prop:continuation-maps} shows that there is a $d_0$-closed cochain $\tilde{\theta}_p^Q \in CF^0(H^Q)_p$ representing $\theta_p^Q \in E_1^0(H^Q)$ such that $\phi(\tilde{\theta}_p^m) = \tilde{\theta}_p^Q + Y$, where $Y$ consists of terms of higher action. Since $d\tilde{\theta}_p^m = dX$ for some cochain $X$ of higher action, we find that
  \begin{equation}
    0= \phi(d\tilde{\theta}_p^m - dX) = d\phi(\tilde{\theta}_p^m) -  d\phi(X) = d\tilde{\theta}_p^Q + dY - d\phi(X)
  \end{equation}
  Thus $\tilde{\theta}_p^Q$ is closed up to the differential of terms of higher action, and so $\theta_p^Q$ is closed for higher differentials in the spectral sequence, and this spectral sequence degenerates. 

  This in particular shows that the continuation map $\phi : HF^0(H^m) \to HF^0(H^Q)$ is triangular with respect to the action filtration, with ones along the diagonal, and therefore that $\phi$ is injective on cohomology.
\end{proof}

To deduce Theorem \ref{thm:degeneration} and hence Theorem \ref{thm:main}, we take $\theta_p = \theta_p^Q$.

\begin{remark}
  The fact that the continuation maps are triangular with respect to the action filtration (rather than strictly diagonal) is unsatisfying if we want to claim to have found a ``canonical basis'' for the symplectic cohomology.  It seems to the author quite likely that the higher action terms in $\phi(\theta_p^m)$ vanish if the Hamiltonian is chosen correctly. Knowing this would make the result somewhat sharper, in that we would have a more compelling reason to identify the basis elements $\theta_p$ with the canonical basis elements of Gross-Hacking-Keel. But even if we were able to prove that, it seems to the author that the only true test of whether these $\theta_p$ really are the Gross-Hacking-Keel theta functions will come when one matches up the product structure on $SH^0(U)$ with the product of theta functions.
\end{remark}

\section{Wrapped Floer cohomology}
\label{sec:wrapped}

We will now describe a relationship between the symplectic cohomology of $U$ and the wrapped Floer cohomology of certain Lagrangian submanifolds in $U$, and make a connection with the results of \cite{binodal}. 

We consider Lagrangian submanifolds $L$, which may be either compact
or \emph{cylindrical at infinity}, meaning that, within the
cylindrical end $\Sigma \times [0,\infty)$ of the completion of our
Liouville domain, $L$ has the form $\Lambda \times [0,\infty)$, where
$\Lambda$ is a Legendrian submanifold of $\Sigma$. 

Given two such Lagrangians $L$ and $K$, the wrapped Floer cohomology
$HW^*(L,K)$ is a $\K$--vector space, which in the situation we consider
will be $\Z$--graded. The definition is parallel to that of symplectic
cohomology: We fix a Hamiltonian $H$. There is a cochain complex
$CF^*(L,K;H)$ generated by time--one chords of $H$ starting on $L$ and
ending on $K$ (which are periodic orbits of $H$ starting at $L$, if $K = L$). The differential
now counts pseudo-holomorphic strips joining such chords, rather than
cylinders. If we choose a quadratic Hamiltonian $H^Q$, we denote this complex by $CW^*(L,K)$, and its cohomology is $HW^*(L,K)$. If we use a linear Hamiltonian $H^m$ of slope $m$, then as before we need to take a direct limit as $m \to \infty$.

Using the wrapped Floer cohomology $HW^*(L,K)$ as the space of
morphisms from $L$ to $K$, we obtain the cohomology--level version of
the wrapped Fukaya category $\cW(U)$. At the chain level, $\cW(U)$ is
an $A_\infty$--category \cite{as-open-string}, which forms the A--side
of the HMS correspondence for open symplectic
manifolds. In particular, the endomorphisms of a single object, $HW^*(L,L)$, forms a ring. This will be our main object of interest. 

To relate symplectic cohomology and wrapped Floer cohomology, we use
\emph{closed-to-open string maps}
\cite{abouzaid-geometric-criterion}. There are various versions, all
defined by counting pseudo-holomorphic curves with boundary, and with
a mixture of interior punctures (corresponding to generators of symplectic
cohomology) and boundary punctures (corresponding to generators of
wrapped Floer cohomology). The first of these is a map $\mathcal{CO}_0 : SH^*(U)
\to HW^*(L,L)$. This map fits into a larger structure, a map
\begin{equation}
  \label{eq:closed-open}
  \mathcal{CO}: SH^*(U) \to HH^*(CW^*(L,L))
\end{equation}
where $HH^*(CW^*(L,L))$ denotes the Hochschild cohomology of the
$A_\infty$--algebra $CW^*(L,L)$ (with coefficients in itself). The map $\mathcal{CO}$ is a map of rings \cite[Proposition 5.3]{ganatra-thesis}.

\subsection{Lagrangian sections}
\label{sec:lagrangian-sections}

In the case of an affine log Calabi--Yau surface $U$ with compactification $(Y,D)$, there is a natural class of Lagrangian submanifolds to consider for wrapped Floer cohomology, namely Lagrangians which are sections of the torus fibration near the divisor. By Lemma \ref{lem:legendrian-section} there is a Legendrian section of the torus fibration $\pi: \Sigma \to S$ on the contact hypersurface. What we desire is a Lagrangian $L$ that caps off this circle in $\Sigma$ to a disk in $U$. For our present purposes, we say that a Lagrangian $L$ is a \emph{section} if
\begin{enumerate}
\item $L$ is diffeomorphic to a disk, and
\item At infinity, $L$ is a cylinder over a Legendrian section of the torus fibration on $\Sigma$.
\end{enumerate}

The wrapped Floer cohomology $HW^*(L,L)$ is simple to compute using a quadratic Hamiltonian. Whereas in computing symplectic cohomology we encountered $T^2$--families of periodic orbits for the Hamiltonian, chords of the Hamiltonian flow joining $L$ to itself have less symmetry. Since $L$ intersects each torus in $\Sigma$ in one point, each torus of periodic orbits (in the free loop space) contains exactly one orbit that is a chord from $L$ to $L$. As these tori are indexed by points $p \in U^\trop(\Z)\setminus \{0\}$, denote by $\theta_{L,p}$ the corresponding chord. The chords $\theta_{L,p}$ are generators for $CW^*(L,L)$, they have degree zero by a comparison to the case of $(\R_{>0})^2 \subset (\C^\times)^2$ analogous to Proposition \ref{prop:perturbation-indices}. There is one more generator $\theta_{L,0} \in CW^0(L,L)$ corresponding to the ordinary cohomology $H^0(L)$, which has rank one since $L$ is topologically a disk. Because $CW^*(L,L)$ is concentrated in degree zero, the differential vanishes trivially. 

\begin{proposition}
  Let $L$ be a Lagrangian section. Then the wrapped Floer complex $CW^*(L,L)$ is concentrated in degree zero, is isomorphic to its cohomology, and has a basis of chords indexed by the points of $U^\trop(\Z)$.
  \begin{equation}
    CW^0(L,L) \cong HW^0(L,L) \cong \Span \{\theta_{L,p}\mid p \in U^\trop(\Z)\}
  \end{equation}
\end{proposition}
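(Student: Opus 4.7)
The plan is to follow the same strategy used for symplectic cohomology in the previous section, but adapted to chords. Choose the quadratic Hamiltonian $H^Q = h(e^\rho)/2$ used earlier, so that on the cylindrical end $X_{H^Q} = h'(e^\rho) R$ is proportional to the Reeb vector field. A time--one Hamiltonian chord from $L$ to $L$ in the cylindrical end is then a segment of a Reeb orbit of some period $T = h'(e^\rho)$ whose endpoints both lie on $L$. Since $L$ is a cylinder over a Legendrian section $\sigma: S \to \Sigma$ and the Reeb flow preserves the torus fibration $\pi : \Sigma \to S$, such a chord must be contained in a single torus fiber $\pi^{-1}(s_0)$ for some $s_0$ and must travel along a closed Reeb orbit starting and ending at the point $\sigma(s_0)$. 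By Lemma~\ref{lem:rational-directions} (together with Corollary~\ref{cor:ham-orbits}), the torus fibers carrying closed Reeb orbits are in bijection with nonzero points $p \in U^\trop(\Z)$, and on each such torus the Legendrian section $\sigma$ meets each Reeb orbit in exactly one point. This gives exactly one chord $\theta_{L,p}$ per $p \in U^\trop(\Z)\setminus\{0\}$ in the cylindrical end. In the compact part of $U$ we also get the constant chord along $L$ (after a small Morse perturbation of $H^Q$ supported on $L$, this contributes a single generator since $L$ is a disk), which we label $\theta_{L,0}$, corresponding to $0 \in U^\trop(\Z)$.

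Next I would verify that each generator $\theta_{L,p}$ sits in degree zero. The relevant index for a Hamiltonian chord on a graded Lagrangian is the Maslov index, computed relative to the trivialization of the canonical bundle determined by the log Calabi--Yau volume form $\Omega$ of Lemma~\ref{lem:volume-form}. By Proposition~\ref{prop:maslov-tori}, the torus fibers near the divisor have vanishing Maslov class, and the Lagrangian section $L$ (being a disk) is canonically graded. Just as in the computation for $(\C^\times)^n$ in Section~\ref{sec:complex-torus}, the index of a chord corresponding to a Reeb orbit on a periodic torus differs from the Conley--Zehnder index of the associated closed Hamiltonian orbit by a correction related to the intersection of $L$ with the torus. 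Because $L$ meets each torus transversely in a single point and contributes zero to the Maslov index computation, the chord index agrees with what one gets from the ``$\binom{n}{0}$'' degree--zero generator in the $(\C^\times)^2$ local model. Concretely, after a small time--dependent Morse--Bott perturbation in a neighborhood of each chord, the nondegenerate generator surviving in degree zero is precisely $\theta_{L,p}$.

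Given this, $CW^*(L,L)$ is concentrated in degree zero, so the differential, which raises degree by one, vanishes for trivial reasons and $CW^0(L,L) = HW^0(L,L)$. The basis is then $\{\theta_{L,p} \mid p \in U^\trop(\Z)\}$ as claimed.

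The main obstacle I expect is the Maslov/grading computation for the chords: one must check carefully that the topological grading induced by $\Omega$ together with a choice of graded structure on $L$ assigns degree zero to every $\theta_{L,p}$. This is essentially a local calculation near each periodic torus, reducible to the toric model of $(\C^\times)^2$ as in Section~\ref{sec:complex-torus} via the same embedding of neighborhoods used in the proof for closed orbits, but it requires verifying that $L$ can be chosen to extend the local Lagrangian section in a way compatible with that toric model (which is automatic up to Hamiltonian isotopy, since any two Lagrangian sections near the divisor are Hamiltonian isotopic through sections). A secondary point to be careful about is that the Morse perturbation on the compact part of $L$ produces only one surviving generator $\theta_{L,0}$; this follows because $L$ is diffeomorphic to a disk and hence has Morse cohomology concentrated in degree zero of rank one.
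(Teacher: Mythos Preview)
Your proposal is correct and follows essentially the same approach as the paper: the paper's argument (given in the paragraph immediately preceding the proposition) is that $L$ meets each torus fiber of $\Sigma$ in a single point, so each torus of periodic orbits contributes exactly one chord $\theta_{L,p}$; the interior contributes $H^0(L)$, which is rank one because $L$ is a disk; and since everything sits in degree zero the differential vanishes trivially.

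One small correction: unlike the closed-orbit case, the chords $\theta_{L,p}$ in the cylindrical end are already isolated and nondegenerate, because the Legendrian section meets each periodic torus transversely in one point. So no Morse--Bott perturbation is required there, and your remark about ``the nondegenerate generator surviving in degree zero'' after such a perturbation is unnecessary (and slightly misleading, since there is nothing to break up). The paper does not elaborate on the degree computation beyond asserting that the chords lie in degree zero; your identification of this as the point requiring care, and your reduction to the $(\C^\times)^2$ local model, is a reasonable way to fill that in.
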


Next we consider the closed-to-open string map $\mathcal{CO}_0 : SH^0(U) \to HW^0(L,L)$. It is useful to consider once again the case of the complex torus $(\C^\times)^n$ (as always, with locally convex Liouville class) as a model. For the Lagrangian section we take the real positive locus $L = (\R_{>0})^n \subset (\C^\times)^n$. The wrapped Floer cohomology is isomorphic (as a ring) to the space of Laurent polynomials
\begin{equation}
  HW^0(L,L) \cong \K[x_1^{\pm 1},\dots, x_n^{\pm 1}]
\end{equation}
and hence abstractly isomorphic to $SH^0((\C^\times)^n)$. The map $\mathcal{CO}_0$ implements this isomorphism concretely, by counting disks satisfying an inhomogeneous pseudo-holomorphic map equation with one interior puncture corresponding to the input, one boundary puncture corresponding to the output, and a Lagrangian boundary condition on $L$. The map $\mathcal{CO}_0$ sends the degree zero generator in $SH^0((\C^\times)^n)$ corresponding to a torus of periodic orbits to the unique chord in $HW^0(L,L)$ corresponding to the same torus. Indeed, since $L$ is contractible, $HW^0(L,L)$ carries a grading by $H_1((\C^\times)^n;\Z)$, as does $SH^0((\C^\times)^n)$, and $\mathcal{CO}_0$ is a homogeneous map. Therefore there can be no disks even topologically connecting $\theta_p$ to $\theta_{L,q}$ for $q \neq p$.

In the general situation of a Lagrangian section $L$ in a log Calabi--Yau surface $U$, the leading order term of the map $\mathcal{CO}_0$ looks the same as in the case of $(\C^\times)^2$.  

\begin{proposition}
  \label{prop:section-iso}
  For a Lagrangian section $L$ in an affine log Calabi--Yau surface with maximal boundary $U$, the closed-open map
  \begin{equation}
    \mathcal{CO}_0 : SH^0(U) \to HW^0(L,L)
  \end{equation}
  is a ring isomorphism that satisfies 
\begin{equation}
  \mathcal{CO}_0(\theta_p) = \theta_{L,p} + \text{(higher action terms)}
\end{equation}
\end{proposition}

\begin{proof}
  Given degree zero generator $\theta_p \in SH^0(U)$, there are low-energy pseudo-holomorphic curves connecting $\theta_p$ to the corresponding generator $\theta_{L,p} \in HW^0(L,L)$. The proof is analogous to Proposition \ref{prop:continuation-maps}. For this, we need the analogue of Proposition \ref{prop:neck-stretching} in the case of the closed-open map, which in turn depends on an analogues of Theorem \ref{thm:no-hol-cylinders} and Corollary \ref{cor:no-hol-cylinders-in-symplectization}. In fact, there is such an analogue for curves with boundary on a Lagrangian section. In order to adapt the proofs to this case, first note that since $L$ is a section, Reeb chords on $L$ correspond to a subset of the closed Reeb orbits. The key argument is based on intersecting a holomorphic curve with the fibers of the projection $\pi : \Sigma \to S$, and analyzing the integral of the contact form over the resulting one-manifolds which are arcs that may have boundary on $L$. But since $L$ is a cone over a Legendrian section of $\pi$ (intersecting each fiber in a single point), they are in fact still closed cycles in the fibers of $\pi$, and the argument can proceed as before.

Since the closed-open map is triangular with respect to the action filtration with ones along the diagonal, it is an isomorphism.
\end{proof}

In many cases, it is simple to construct a Lagrangian submanifold with the desired properties. In the toric case, we take $L = (\R_{>0})^2 \subset (\C^\times)^2$. Variations on this work in other cases. When $Y$ is obtained by the blow up of $\CP^2$ in several points, we can often arrange for the blow up points and the anticanonical divisor to be compatible with the real structure on $\CP^2$. Removing the anticanonical divisor will then disconnect the real locus of $Y$ into several components. If done right one of them will be a disk. 

For example, in the case of the cubic surface with a triangle of lines (\S \ref{sec:cubic}), we can take the blow up points $p_1, \dots, p_6$ to be real. The anticanonical divisor has the form $D = L_{ab}+ E_b + C_a$. If we choose this so that $C_a$ is represented in the real picture as an ellipse, and the region bounded by $C_a$ and $L_{ab}$ contains $p_b$ as the only blow up point on its boundary, then this region is a connected component of the real locus of $Y \setminus D$, which we may take as our Lagrangian section.

In the case of the degree 5 del Pezzo surface (\S \ref{sec:M05}), we blow up $\CP^2$ in four points $p_1,\dots,p_4$, which we take to be real. When we remove an anticanonical $5$--cycle, one component of the real locus is a disk. In fact, removing all $10$ of the $(-1)$--curves at once disconnects the real locus (which is a non-orientable surface of Euler characteristic $-3$) into $12$ disks (which are combinatorially pentagons).

Another way to find appropriate Lagrangians is to explicitly consider the Lagrangian torus fibration on the whole of $U$, rather than just near the boundary, and try to construct a section thereof. Such a torus fibration exists on $U$ due to the existence of a toric model \cite{ghk}, and the results of Symington on almost-toric structures on blow-ups \cite{symington-4from2}. For example, in the case of a punctured $A_n$ Milnor fiber $U$ (\S \ref{sec:a-n-milnor-fiber}), there is a Lefschetz fibration on $U \to \C^\times$ whose fibers are affine conics with $n+1$ singular fibers at the points $\xi_k = \exp(2\pi ik/(n+1))$. Following the sort of construction found in \cite{auroux07}, we can construct a Lagrangian torus by taking a circle of radius $r$ in the base $\C^\times$, and looking at the family of circles at some fixed ``height'' in the conic fibers. Such tori foliate $U$, and there is one singular fiber, which is a torus with $n+1$ nodes. To construct a Lagrangian section of this torus fibration, begin with a path $\ell$ in the base $\C^\times$ joining $0$ to $\infty$, and not passing through any critical value. Over a particular point in $\ell$, the fiber is an affine conic, also isomorphic to $\C^\times$, and we may take again an infinite path joining the two ends. Under the symplectic parallel transport along $\ell$, this path in the fiber sweeps out a Lagrangian in the total space, which is our $L$. If the path $\ell$ crosses each circle centered at the origin once, and the path in the fiber crosses each circle of constant ``height'' once, then $L$ is actually a section of the torus fibration. If the constructions of Section \ref{sec:construction} are done compatibly with $L$, then $L$ will be section in the above sense, so Proposition \ref{prop:section-iso} applies.



\subsection{The case of the affine plane minus a conic}
\label{sec:c2-conic}

In \cite{binodal}, the present author considered the case where $U$ is
the complement of a smooth conic in $\C^2$, which is to say the
complement of a conic and a line in $\PP^2$. In this case the mirror
$U^\vee$ is likewise $\Af^2_\K$ minus a conic (an accident of low
dimensions). The ring of global functions on $U^\vee$ is:
\begin{equation}
  \cO(U^\vee) \cong \K[x,y][(xy-1)^{-1}]
\end{equation}
We considered a Lagrangian torus fibration on the whole of $U$, and a Lagrangian section $L$. This Lagrangian section fits into the discussion above, so its wrapped Floer cohomology is concentrated in degree zero and there is no differential. Using techniques particular to this case (and others like it), we computed the ring structure on the wrapped Floer cohomology, and showed that it is isomorphic to the same ring:
\begin{equation}
  CW^0(L,L) \cong HW^0(L,L) \cong \K[x,y][(xy-1)^{-1}]
\end{equation}
This isomorphism carries the basis elements in $HW^0(L,L)$ corresponding to chords to the functions of the form 
\begin{equation}
\{x^ay^b(xy-1)^c\mid a \geq 0, b \geq 0, c \in \Z\}
\end{equation}

With this result at hand, we can extend our discussion of the closed-to-open string map. Since we know that $CW^*(L,L)$ is a commutative ring concentrated in degree zero, we find that $HH^0(CW^*(L,L)) \cong CW^0(L,L) \cong \cO(U^\vee)$, where the second isomorphism is proven in \cite{binodal}. Since this ring is moreover smooth over $\K$, the Hochschild--Kostant--Rosenberg theorem implies that 
\begin{equation}
  HH^p(CW^*(L,L)) \cong H^0(U^\vee,\wedge^p T_{U^\vee})
\end{equation}
The degree zero piece of the map $\mathcal{CO}$
\begin{equation}
  \mathcal{CO}: SH^0(U) \to HH^0(CW^*(L,L)) \cong CW^0(L,L)
\end{equation}
is just the map $\mathcal{CO}_0$ considered above. Since the map $\mathcal{CO}$ is naturally a ring map, we can combine this with the previous discussion to obtain that $SH^0(U)$ is isomorphic to $CW^0(L,L)$ as a ring, and thus
\begin{equation}
  SH^0(U) \cong CW^0(L,L) \cong \cO(U^\vee).
\end{equation}
This confirms the Gross-Hacking-Keel conjecture, including the ring structure, in this example.

The results of \cite{binodal} can also be used to treat the case of the punctured $A_n$ Milnor fibers (in which the affine manifold has ``parallel monodromy-invariant directions''). See \cite{chan-ueda} for a discussion of related cases.

\section{Coefficients in a positive cone and degeneration to the vertex}
\label{sec:coeff-degeneration}

From the point of view of algebraic geometry, one of the most interesting features of the symplectic cohomology $SH^*(U)$ is that it is a graded commutative ring. In particular, the degree zero part $SH^0(U)$ is a commutative ring that is putatively the ring of functions on the mirror of $U$. In this section we will discuss this ring structure using the techniques developed in the previous sections. Since these techniques are mainly suited for proving the emptiness of certain moduli spaces, we only obtain results saying that a certain generator cannot appear in the product of two other generators. A fuller analysis including an enumeration of the nonempty moduli spaces would be required to prove the full strength of Gross--Hacking--Keel's conjectures.

In this section, we shall write $H_{2}(Y)$ for $H_{2}(Y;\Z)$. We will consider three variants of the symplectic cohomology with different coefficient rings:
\begin{enumerate}
\item $SH^{*}(U;\K)$, with coefficients in $\K$. This is what we have been considering up to this point.
\item $SH^{*}(U;\K[H_{2}(Y)])$, with coefficients in the group algebra $\K[H_{2}(Y)]$ of the abelian group $H_{2}(Y)$. This is constructed in Section \ref{sec:homology-pop}.
\item $SH^{*}(U;\K[P])$, with coefficients in the monoid algebra $\K[P]$ of a certain submonoid $P\subset H_{2}(Y)$. This is constructed in Section \ref{sec:positive-cone}.
\end{enumerate}
The main result of this section is Theorem \ref{thm:degeneration-to-vertex}. 

\subsection{Homology class associated to a pair of pants}
\label{sec:homology-pop}

The product on symplectic cohomology is defined in terms of counting inhomogeneous pseudo-holomorphic maps of pairs of pants into $M$, the completion of the domain $\overline{U}'$. For the rest of this section we will identify $M$ and $U$. The punctures of the pairs of pants map asymptotically to periodic orbits of the Hamiltonian function. To each such map one can associate a relative homology class: if $\gamma_1$, $\gamma_2$, and $\gamma_3$ are periodic orbits, and $u : S \to U$ is a map contributing to the coefficient of $\gamma_3$ in the product of $\gamma_1$ and $\gamma_2$, we obtain a relative homology class
\begin{equation}
  [u] \in H_2(U,\gamma_1\cup \gamma_2 \cup \gamma_3)
\end{equation}
This class is awkward to work with, since the space it lives in depends on the periodic orbits under consideration, but it is possible to promote it to an absolute homology class in a compactification. Let $Y$ be a fixed compactification of $U$ by a cycle of rational curves as before. Letting $C = \gamma_1\cup\gamma_2\cup \gamma_3$ The exact sequence of the pair $(Y,C)$ is
\begin{equation}
  H_2(C) = 0 \to H_2(Y) \to H_2(Y, C) \to H_1(C) \cong \Z^3
\end{equation}
We wish to split the map $H_2(Y) \to H_2(Y,C)$. Since this sequence is left exact, it suffices to split the map $H_2(Y,C) \to H_1(C)$. This amounts to choosing a disk in $Y$ that bounds each periodic orbit $\gamma_i$. For the given compactification $Y$, there is a natural way to do this. The periodic orbits corresponding to the cohomology of $U$ are contractible by construction, and in fact each is localized in a neighborhood of a critical point of the function used to perturb the Hamiltonian in the interior, so we simply use a small disk in that neighborhood (this is independent of the compactification). For the orbits near the divisor, we can use disks that pass through the divisor. To fix the remaining ambiguity, we require that, for a orbit near the node $D_i \cap D_{i+1}$, we use a disk that is entirely contained in a neighborhood of that node (any two such disks are homologous within the neighborhood). There are also orbits in the middle of each divisor, consisting of normal circles to the divisor, and for these we use a normal disk to the divisor.

This prescription guarantees that, if $\gamma$ is a Reeb orbit that
links the divisors $D_i$ and $D_{i+1}$, the capping disk only
intersects $D_i$ and $D_{i+1}$, with some multiplicities. These
multiplicities are non-positive when the disk is oriented so
that its boundary is $\gamma$ with the orientation given by the Reeb
flow.

We denote by $\phi(u) \in H_2(Y;\Z)$ the homology class in the compactification given by capping $u$ with the disks chosen above. It is immediate that the preceding discussion generalizes from the pair of pants to maps of any Riemann surface with punctures, such that the punctures map asymptotically to periodic orbits.

Now we may define the symplectic cohomology $SH^{*}(U;\K[H_{2}(Y)])$ with coefficients in the group algebra $\K[H_2(Y)]$ whose basis elements are denoted $q^{c}$ for $c \in H_2(Y)$. This is standard: all of the Hamiltonian Floer complexes used in the definition are now taken to be free $\K[H_{2}(Y)]$-modules spanned by the periodic orbits (rather than $\K$-vector spaces as before). Whenever a pseudo-holomorphic map $u : S \to U$ contributes to an operation, this term counts with coefficient $q^{\phi(u)}$. This affects the differential, the continuation maps, the BV operator, and the product (as well as higher operations). The original $SH^{*}(U;\K)$ is recovered by setting $q^{c} = 1$ for all $c \in H_2(Y)$. 

\subsection{A positive cone in $H_2(Y;\Z)$}
\label{sec:positive-cone}

Let $\NE(Y)$ denote the cone of curves, namely, the cone in $H_2(Y;\Z)$ spanned by the classes of effective curves, that is, linear combinations of homology classes of complex curves with non-negative coefficients. The optimal goal would be to show that the coefficient ring of symplectic cohomology, in the cases under consideration, can be reduced to $\K[\NE(Y)]$, the monoid ring of the cone of curves. For technical reasons, we will actually work with a larger monoid $P$, defined as
\begin{equation}
  \label{eq:P-monoid}
  P = \langle [D_1], \dots, [D_n]\rangle  + \langle [C] \mid   (\forall i)(C \cdot D_i \geq 0) \text{ and }C\cdot D > 0\rangle \subset H_2(Y;\Z)
\end{equation}
In this context the angle brackets denote the submonoid generated by the enclosed elements. Thus an element of $P$ is either a positive combination of components of the boundary divisor, or a class (not necessarily effective) that intersects each component of the boundary non-negatively and is not disjoint from $D$, or a sum of such. Note that, in the case at hand, where $D$ supports an ample divisor, the left summand in the definition is contained in the one on the right. Note also that, since $P$ is a monoid, $0 \in P$. The reason for enlarging the monoid is that we are counting inhomogeneous pseudo-holomorphic curves rather than honest holomorphic curves for the integrable complex structure. Thus it is not clear that the classes we get are effective in the standard sense, while it is possible to show that they lie in $P$.
\begin{proposition}
  \label{prop:P-monoid}
  The monoid $P$ is strictly convex (that is, $v \in P\setminus \{0\}$ implies $-v \notin P$), and it contains $\NE(Y)$. If $A = \sum a_i D_i$ is an ample divisor with all $a_i > 0$, then $A$ is strictly positive on any non-zero element of $P$.
\end{proposition}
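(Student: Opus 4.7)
The plan is to prove the three assertions in the order (3), (1), (2): statement (3) does all the real work, statement (1) drops out as an immediate corollary (with Lemma \ref{lem:nakai} supplying an ample $A = \sum a_i D_i$ with all $a_i > 0$), and (2) is a separate verification on classes of irreducible curves.

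For (3), my approach is to check that $A$ evaluates strictly positively on each generator of the two monoids in \eqref{eq:P-monoid}. On a boundary class $[D_i]$, positivity $A \cdot D_i > 0$ is the Nakai--Moishezon condition applied to $A$. On a generator of the second type, namely a class $[C]$ with $C \cdot D_i \geq 0$ for every $i$ and $C \cdot D > 0$, I would expand $A \cdot C = \sum_i a_i (C \cdot D_i)$: every summand is non-negative since $a_i > 0$ and $C \cdot D_i \geq 0$, and $C \cdot D = \sum_i C \cdot D_i > 0$ forces some index $j$ with $C \cdot D_j > 0$, which combined with $a_j > 0$ makes that summand strictly positive. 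Since any nonzero $p \in P$ is by definition a non-trivial non-negative combination of these generators, $A \cdot p$ is a non-trivial non-negative sum of positive quantities, hence strictly positive.

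Given (3), strict convexity in (1) is automatic: a nonzero class $p$ lying in $P \cap (-P)$ would simultaneously satisfy $A \cdot p > 0$ and $A \cdot (-p) > 0$, an impossibility. Hence $P \cap (-P) = \{0\}$. For (2), I would show that the class of every irreducible curve lies in $P$. If the irreducible curve is a boundary component $D_i$, then $[D_i]$ is tautologically in the first summand. Otherwise $C$ is an irreducible curve not contained in $D$, so each intersection $C \cdot D_i$ is non-negative by proper intersection of distinct irreducible curves. Ampleness of $A$ gives $A \cdot C > 0$, and expanding $A \cdot C = \sum_i a_i (C \cdot D_i)$ with $a_i > 0$ forces some $C \cdot D_j > 0$, hence $C \cdot D > 0$, placing $[C]$ in the second summand of $P$. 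Since $\NE(Y)$ is generated by classes of irreducible curves, $\NE(Y) \subset P$.

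The main obstacle is essentially cosmetic: I need to check that a non-trivial non-negative combination of generators of $P$ does not collapse to zero in $H_2(Y;\Z)$, a concern that would otherwise muddle both strict convexity and the non-triviality step in (3). The linear functional $A \cdot (-)$ discharges this cleanly by being simultaneously positive on every generator, so positivity on sums and non-vanishing of sums are really one and the same statement.
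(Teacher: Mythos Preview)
Your proof is correct and follows essentially the same approach as the paper: both check positivity of $A$ on the two types of generators of $P$ (using ampleness of $A$ for the $[D_i]$, and the expansion $A\cdot C = \sum a_i(C\cdot D_i)$ for the second type), deduce strict convexity from this, and verify $\NE(Y)\subset P$ by splitting irreducible curves into boundary components versus curves not in $D$. The only cosmetic difference is the order of presentation and that the paper phrases the step $C\cdot D > 0$ for an irreducible $C\not\subset D$ as ``$C$ cannot be disjoint from $D$ since $D$ supports an ample divisor,'' whereas you spell out the same fact via $A\cdot C > 0$ and the expansion.
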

\begin{proof}
  To see that $P$ contains $\NE(Y)$, let $C$ be an irreducible (effective) curve. Either $C$ coincides with a component of $D$, in which case $[C]$ is in the first term of \eqref{eq:P-monoid}, or $C$ is not a component of $D$. In the latter case, $C \cdot D_i \geq 0$ follows by positivity of intersection, as does $C \cdot D \geq 0$. To prove the strict inequality $C \cdot D > 0$, we use the fact that $D$ supports an ample divisor, so that $C$ cannot be disjoint from $D$. Thus $[C]$ lies in the second term of \eqref{eq:P-monoid}.

The statement that $P$ is strictly convex follows from the statement that $A$ is strictly positive on $P \setminus \{0\}$. To prove the latter statement, it suffices to check the generators. First, $A\cdot D_i > 0$ since $A$ is ample and $D_i$ is effective. Second, if $C$ is such that $C \cdot D_i \geq 0$, and $C \cdot D > 0$, then the numbers $C \cdot D_i$, as $i$ varies, are non-negative and not all zero. Thus $A\cdot C = \sum a_i(C \cdot D_i) > 0$.
\end{proof}

Now we can state the main result of this section. Let $\K[P] \subset \K[H_{2}(Y)]$ be the subalgebra generated by $P \subset H_{2}(Y)$. 

\begin{proposition}
  \label{prop:effectivity}
  There is a well-defined subspace $SH^0(U;\K[P]) \subset SH^{0}(U;\K[H_{2}(Y])$ spanned by $\K[P]$-linear combinations of periodic orbits. The homology classes associated to
  pseudo-holomorphic curves contributing to the product
  lie in $P$. Thus, $SH^0(U,\K[P])$ is closed under multiplication, and has the structure of a $\K[P]$--algebra.
\end{proposition}

\begin{remark}
   Regarding the symplectic cohomology as a family over the base given by the spectrum of the coefficient ring, we may interpret the change from $\K$ to $\K[H_2(Y)]$ to $\K[P]$ as changing the base first from a point to an algebraic torus containing that point as its identity element, and then to a partial compactification of that algebraic torus.
\end{remark}

The proof of Proposition \ref{prop:effectivity} uses the same neck--stretching argument used to describe the differential. Consider three periodic orbits $\gamma_1$, $\gamma_2$, and $\gamma_3$, and suppose that $u : S \to U$ is a map from the pair of pants that is considered when computing the coefficient of $\gamma_3$ in $\gamma_1 \cdot \gamma_2$. This has some class $\phi(u) \in H_2(Y;\Z)$ given by capping off the periodic orbits in the prescribed way. To show that $\phi(u)$ is in $P$, we deform the situation just as in section \ref{sec:neck-stretching}. Suppose that curves in class $\phi(u)$ contribute a nonzero count to the coefficient of $\gamma_3$ in $\gamma_1\cdot \gamma_2$, and that such curves persist through out the neck--stretching process, which also involves canceling the perturbation of the Hamiltonian to make $H$ radial. The limiting configuration is a building, with one level in $U$, and other levels in $\Sigma \times \R$. The homology class of the limiting configuration is the sum of the homology classes associated to each of the levels, where we may embed $\R\times \Sigma$ into $Y$ as a tubular neighborhood of $D$ with $D$ itself removed. This involves capping off all of the Reeb orbits where the curve breaks, but because different levels are joined along Reeb orbits, all of the ``caps'' cancel out except for those associated to the original boundary orbits $\gamma_1,\gamma_2$, and $\gamma_3$. We will show that the homology class associated to each level of this broken curve lies in $P$.

\begin{lemma}
  \label{lem:interior-level}
  If $C$ is the homology class of the level of the broken curve lying in $U$, then $C$ is either zero, or else satisfies $C\cdot D_i \geq 0$ for all $i$ and $C \cdot D > 0$, and hence is in $P$.
\end{lemma}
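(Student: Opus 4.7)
The plan is to split the cycle $C$ into a $2$-chain coming from the interior curve $u$ itself and a combination of capping disks for its asymptotic orbits, then estimate the intersection of each piece with the divisor. The geometric curve $u$ contributes nothing to $C \cdot D_j$ because it is disjoint from $D$, while the caps for the positive Reeb asymptotes contribute non-negatively to each $C \cdot D_j$ and strictly positively when summed over $j$.

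First, I would use the structure of the Bourgeois--Oancea neck-stretching limit (Section \ref{sec:neck-stretching}): since the Hamiltonian perturbation survives only in the topmost level of the building, the interior level is a finite-Hofer-energy pseudo-holomorphic curve $u \colon S \to U$ all of whose punctures are asymptotic to closed Reeb orbits in $\Sigma$, and all such punctures are positive for $u$ (the neck and the rest of the building lie above). Writing $\beta_k$ for the $k$th asymptotic Reeb orbit and $E_k$ for its capping disk prescribed in Section \ref{sec:homology-pop} (oriented so that $\partial E_k = \beta_k$ with the Reeb orientation), the cycle class decomposes as $C = [u] - \sum_k E_k$.

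Second, I would fix a component $D_j$ and compute $C \cdot D_j$. For the $[u] \cdot D_j$ term, the map $u$ extends continuously to the closure of its punctured domain through its Reeb asymptotics, with image contained in $U \cup \Sigma$; since $D_j$ is disjoint from $U \cup \Sigma$, the extended image never meets $D_j$, and the intersection pairing (which depends only on homology classes) vanishes. Applying the capping convention that $E_k \cdot D_j \leq 0$, one obtains
\begin{equation}
  C \cdot D_j = -\sum_k E_k \cdot D_j \geq 0,
\end{equation}
verifying the first condition for membership in $P$. Summing over $j$ and using that each Reeb asymptote $\beta_k$ is homologous near the boundary to a non-negative integral combination $a_k[\Gamma_{z_{i(k)}}] + b_k[\Gamma_{w_{i(k)}}]$ with $(a_k,b_k) \neq (0,0)$, the local computation of Section \ref{sec:affine-man} gives $-E_k \cdot D = a_k + b_k > 0$, so $C \cdot D > 0$ as soon as $u$ has at least one puncture. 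If $u$ has no punctures at all, then $u$ is a finite-energy closed pseudo-holomorphic curve in the exact manifold $U$, hence constant, forcing $C = 0$. Either way, $C$ lies in $P$ by the defining description \eqref{eq:P-monoid}.

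The main obstacle I expect to work through is verifying the non-positivity $E_k \cdot D_j \leq 0$ when $\beta_k$ lies over a smooth point of a divisor rather than near a node; this is nevertheless a local computation parallel to the node analysis of Section \ref{sec:affine-man}, in which $\beta_k$ is a Reeb-oriented circle in a normal fiber to $D_j$ and its capping disk is the oppositely oriented normal fiber disk, giving intersection $-1$ with $D_j$ and no intersection with any other component.
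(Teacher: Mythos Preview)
Your argument is correct and follows essentially the same approach as the paper: both arguments observe that the interior level is disjoint from $D$, that the caps (oriented so the boundary is the Reeb-oriented orbit) intersect the $D_j$ non-positively, and that the existence of at least one puncture forces $C\cdot D>0$. The only minor difference is in the degenerate case: you argue that a curve with no punctures is a closed holomorphic curve in an exact manifold and hence constant, while the paper instead observes that absence of punctures means the product being computed is $1\cdot 1=1$, whose class is zero by construction.
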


\begin{proof}
  Observe that the level in $U$ has asymptotics at Reeb orbits in $\Sigma$, which are oriented so that the orientation of the orbit given by the Reeb flow agrees with the orientation as the boundary of the Riemann surface. We must cap these off using disks whose boundary orientation is \emph{opposite} to the Reeb orientation. Since the Reeb orbits wind \emph{negatively} around the components of $D$, these caps intersect the components of $D$ \emph{positively}. Thus $C$ intersects each $D_i$ positively, if at all. If the product operation we are computing involves orbits in the cylindrical end, then the level in $U$ must have some connection to the other levels, and so there must be at least one such Reeb orbit, and a positive intersection with some $D_i$. If this is not the case, then we must be computing the product $1\cdot 1 = 1$, in which case the homology class is zero.
\end{proof}

\begin{lemma}
  \label{lem:symplectization-level}
  If $C$ is the homology class of any level of the broken curve lying in $\Sigma \times \R$, then $C \in \langle[D_1],\dots,[D_n]\rangle \subset P$.
\end{lemma}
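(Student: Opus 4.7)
The plan is to express $[\bar{C}]$ as a combination $\sum_i c_i [D_i]$ in $H_2$ of a tubular neighborhood of $D$, and then to show $c_i \geq 0$ for each $i$ by pairing $\bar{C}$ with a small $J$-holomorphic normal disk and invoking positivity of intersection.

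First I would establish containment. The level $C$ lies in $\Sigma \times \R$, which embeds into a tubular neighborhood $N$ of $D$ as $N \setminus D$. Each capping disk attached to an asymptotic Reeb orbit of $C$ is, by the prescription of Section \ref{sec:homology-pop}, either confined to a small neighborhood of a node of $D$ or is a normal fiber disk of a single component $D_i$, so the capped surface $\bar{C}$ also lies in $N$. Since $N$ deformation retracts onto the cycle of rational curves $D$, we have $H_2(N;\Z) = \bigoplus_i \Z [D_i]$, and $[\bar{C}] = \sum_i c_i [D_i]$ for some integers $c_i$.

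Next, to extract $c_i$, I would fix an index $i$ and choose a generic interior point $p_i \in D_i$, away from the two nodes at its endpoints and away from the finitely many points at which capping disks meet $D_i$. Let $F_i$ be a small $J$-holomorphic disk through $p_i$ transverse to $D_i$, where $J$ is the cylindrical almost complex structure under which $C$ is holomorphic. Such a $J$-holomorphic disk exists by the standard local theory of pseudo-holomorphic curves, and being a small deformation of the $J_Y$-holomorphic normal disk at $p_i$, it lies in the same relative homology class, so $F_i \cdot D_j = \delta_{ij}$. By the genericity of $p_i$, $F_i$ is disjoint from every capping disk, giving $c_i = [\bar{C}] \cdot [F_i] = C \cdot F_i$. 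Since both $C$ and $F_i$ are $J$-holomorphic, positivity of intersection for $J$-holomorphic curves in a $4$-manifold yields $C \cdot F_i \geq 0$, and hence $c_i \geq 0$, whence $[\bar{C}]$ lies in $\langle [D_1],\dots,[D_n] \rangle$ as required.

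The hard part will be constructing $F_i$: producing a small $J$-holomorphic disk through $p_i$ transverse to $D_i$ whose intersection numbers with the divisor components agree with those of a complex normal disk. This should follow from the standard local theory of pseudo-holomorphic curves by interpolating, in a small neighborhood of $p_i$, between the cylindrical $J$ and the integrable complex structure $J_Y$ through compatible almost complex structures, but verifying that the deformation preserves the relative homology class and does not disturb the earlier cylindrical-end setup of Section \ref{sec:construction} deserves some care.
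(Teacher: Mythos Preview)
Your approach via positivity of intersection has a genuine gap: the level curve $C$ need not be $J$-holomorphic. As the paper makes explicit in the proof (and in the neck-stretching setup of Section~\ref{sec:neck-stretching}), the top symplectization level is an \emph{inhomogeneous} pseudo-holomorphic map, i.e.\ a Floer solution satisfying $(du - X_H\otimes\beta)^{0,1}=0$ for a Hamiltonian $H$ depending only on $\rho$. For such maps standard positivity of intersection with a $J$-holomorphic disk $F_i$ does not apply; there is no reason the intersection number $C\cdot F_i$ must be non-negative. A secondary issue is that the cylindrical $J$ on $\Sigma\times\R$ does not extend across $D$ (since $\rho\to -\infty$ there), so your disk $F_i$ through $p_i\in D_i$ cannot be holomorphic for the same $J$ that controls $C$; you would need to extend $J$ over the divisor and verify that $F_i$ can be made holomorphic for this extension, which is not automatic.

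The paper sidesteps both problems by never appealing to positivity of intersection in the target. Instead it uses Proposition~\ref{prop:floer-map-decomp}: for Hamiltonians depending only on $\rho$, the $\xi$-component $\pi_\xi\circ df$ of an inhomogeneous solution is still $J_\xi$-complex linear, so the projection $f:T\to\Sigma$ is pseudo-holomorphic in the contact sense regardless of the Hamiltonian term. The coefficient of $[D_i]$ is then read off as the degree of the composed map $r\circ u$ onto $D_i$ near the torus $\pi^{-1}(s_i)$ where Reeb orbits are normal circles to $D_i$. At such points the contact planes $\xi$ project orientation-preservingly onto $TD_i$, so by Proposition~\ref{prop:positivity-contact-target} the map $r\circ u$ has rank zero or two and is orientation-preserving at rank-two points; Sard's theorem then yields a regular value with non-negative degree. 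This argument is intrinsic to $\Sigma$ and works uniformly for both the genuinely holomorphic intermediate levels and the inhomogeneous top level.
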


\begin{proof}
  Let $u: T \to \Sigma \times \R$ be an inhomogeneous pseudo-holomorphic curve in the symplectization that appears as one of the levels in the broken curve. Here the domain Riemann surface $T$ has several positive and negative punctures, and the inhomogeneous term is given by a Hamiltonian function that depends only on the $\R$ component of the target. To determine the homology class $C = \phi(u)$ associated to $u$, we embed $\Sigma \times \R$ into a normal neighborhood $N(D)$ of the divisor $D$, and we cap off all of the punctures using disks that pass through $D$. Thus we may compute the homology class in the group $H_2(N(D);\Z) \cong \bigoplus_{i=1}^n \Z[D_i]$, spanned by the irreducible components of $D$. This involves projecting the image of $u$ (and the various capping disks) onto the various components of $D$, using a retraction $r: N(D) \to D$. To construct such a retraction, recall that $N(D)$ is topologically a plumbing of several spheres in a cycle. We may construct $r$ first over $D_i$ minus a neighborhood of the nodes so that it is a disk fibration, and then extend these maps over neighborhoods of the nodes. One way to construct the latter map is to consider the local model $\{(z_1,z_2) \in \C^2 \mid |z_1|^2 + |z_2|^2 \leq 1\}$, and consider the symplectic parallel transport for the Lefschetz fibration $(z_1,z_2) \mapsto z_1z_2$ into the central fiber $z_1z_2 = 0$.

Recall that the contact manifold $\Sigma$ is a torus bundle $\pi : \Sigma \to S$, where $S$ is a circle. Consider a component $D_i$. From Step 5 (Section \ref{sec:step5}) of the construction of the Liouville domain, there is a particular value $s_i \in S$ such that the Reeb orbits on the torus $\pi^{-1}(s_i)$ consist of orbits of the circle action along the smooth part of $D_i$ constructed in Step 4 \ref{sec:step4}. Consider a small interval $(s_i - \epsilon, s_i + \epsilon)$ about this point, and the preimage $V = \pi^{-1}(s_i - \epsilon,s_i+\epsilon)$. Recalling that $\Sigma$ itself was chosen as the boundary of a neighborhood of $D$, we can set up the retraction $r: N(D)\to D$, so that $r(V)$ is an annulus $A$ in $D_i$, and $r|_V : V \to A$ is a circle fibration whose fibers are orbits of the circle action along the smooth part of $D_i$.

Along the torus $\pi^{-1}(s_i)$, the planes of the contact structure $\xi$ are transverse to the fiber of $r|_V$, since this fiber is the Reeb orbit. Furthermore, under the projection $r|_V$, the orientation of $\xi$ agrees with the complex orientation of $A \subset D_i$ (recall that the orientation of $\Sigma \times \R$, which agrees with the orientation of $N(D)$, is given by taking, in order, the vector pointing radially \emph{towards} $D$, the angular vector winding \emph{negatively} around $D$, and the contact plane $\xi$). Therefore, adjusting $\epsilon$ if necessary, we may guarantee that at every point of $V$, the projection $r|_V$ maps the contact plane $\xi$ onto the tangent space of $D_i$ isomorphically preserving orientation. 

Consider the map $r \circ u : T \to D$, mapping the open set $(r\circ u)^{-1}(A)$ to $A$. Combining the previous paragraph with Proposition \ref{prop:positivity-contact-target}, we find that, at any point in the domain mapping to the circle $r(\pi^{-1}(s_i))$, the rank of $r \circ u$ is either zero or two (the characteristic rank dichotomy for holomorphic maps between Riemann surfaces). This is because the differential of the map $r$ factors through the projection $\pi_\xi : T\Sigma \to \xi$, and $\pi_\xi \circ du$ has rank either zero or two. We claim that $r\circ u$ has a regular value sitting on the circle $r(\pi^{-1}(s_i))$. To see why this is so, consider the further projection $\sigma$ mapping the annulus $A$ to the core circle $r(\pi^{-1}(s_i))$. By Sard's Theorem, the composite $\sigma \circ (r \circ u)$ must have a dense set of regular values, where the rank at any point in the preimage is one. Let $p$ be one such regular value; we may assume that $p$ is disjoint from any of the capping disks used in the construction of $\phi(u)$, for if a capping disk intersects $\pi^{-1}(s_{i})$, it means that the corresponding Reeb orbit lies in that torus, and in that case the capping disk maps to a single point under $r$. This same value $p$ is regular for $(r \circ u)$, for if not, there would be a point in the domain where the rank of $(r \circ u)$ is less than two, hence zero by the dichotomy, and hence $\sigma \circ (r \circ u)$ would also have rank zero at $p$. The degree of the map $(r \circ u)$ is non-negative at $p$, because the maps $\pi_\xi \circ du : TT \to \xi$ and $dr : \xi \to TD_i$ are orientation preserving.

The coefficient of $[D_i]$ in the homology class $C$ is equal to the degree at the regular value $p$. One way to see this is to use Poincar\'{e}--Lefschetz duality in $N(D)$ and intersect with the disk $r^{-1}(p)$.
\end{proof}

Lemmas \ref{lem:interior-level} and \ref{lem:symplectization-level} obstruct the existence of broken curves representing homology classes outside of $P$. To apply them, we argue as follows. Suppose we wish to compute one of the matrix coefficients with respect to a basis of periodic orbits of some operation (differential, continuation map, product, and so on). We restrict attention to the components of the relevant moduli space of maps representing homology classes \emph{not in} $P$. If this space is not eventually empty as we stretch the neck and turn off the perturbation of the Hamiltonian $H$, Gromov compactness yields a subsequence that converges to a broken curve, which, by the nature of Gromov convergence, has a homology class not in $P$, which is impossible. Thus, by deforming the situation close enough to the limit, we guarantee that all curves contributing to the operation have homology classes in $P$.

\begin{remark}
  In this argument, it is of some importance that the \emph{a priori} energy bound in terms of action differences gives us compactness across all homology classes simultaneously, rather than just one homology class at a time.
\end{remark}

To construct $SH^{0}(U;\K[P])$ we proceed as follows. First recall that $HF^{0}(H^{m}) = CF^{0}(H^{m})$ (with any coefficients) since the differential vanishes. Therefore we may take
\begin{equation}
  HF^{0}(H^{m};\K[P]) = CF^{0}(H^{m};\K[P]) \subset CF^{0}(H^{m};\K[H_{2}(Y)])  
\end{equation}
to be the subspace spanned by $\K[P]$-linear combinations of periodic orbits. As for the continuation maps $HF^0(H^m;\K[H_{2}(Y)]) \to HF^0(H^{m'};\K[H_{2}(Y)])$, the neck--stretching argument shows that we can set these up so that the curves all have homology classes lying in $P$. Thus the continuation maps preserve the subspaces $HF^{0}(U;\K[P])$, and we may define $SH^{0}(U;\K[P])$ to be the limit of these subspaces. Finally, the neck--stretching argument shows that the curves contributing to the product have homology classes in $P$, so the product $HF^0(H^{m_1};\K[P])\otimes HF^0(H^{m_2};\K[P]) \to HF^0(H^{m_1+m_2};\K[P])$ makes sense and passes to a well-defined product on $SH^{0}(U;\K[P])$. This completes the proof of Proposition \ref{prop:effectivity}.

\subsection{Broken line diagrams}
\label{sec:drawing}

Recall that associated to the log Calabi--Yau surface $U$ there is an affine manifold $U^\trop$, which is in fact obtained by gluing together several quadrants $(\R_{\geq 0})^2$ by $\SL_2(\Z)$ transformations. In this section, we will show how to associate a diagram in $U^\trop$ to a holomorphic curve in $\Sigma \times \R$ such as those appearing in the neck-stretching limits discussed previously. These diagrams are graphs in $U^\trop$ consisting of straight line segments and rays, which we call \emph{broken line diagrams}. These diagrams are a specific type of tropical curve in $U^\trop$. While we do not provide here an enumerative theory of such curves, much less prove that the counts of holomorphic curves equal the putative tropical analog, they have some immediate topological applications. For instance, the broken line diagram encodes some information about the homology class of the holomorphic curve. The diagrams we describe are closely related to the tropical curves called \emph{broken lines} in \cite{ghk} from which they get their name. (As for the difference between the concepts, the diagrams described here may consist of several broken lines put together.) In fact, it was by thinking about such broken lines and the putative correspondence to holomorphic curves that we were led to the results of the present paper.

Figure \ref{fig:broken-line} shows an example of a broken line diagram. Two quadrants in the affine manifold $U^\trop$ are depicted, corresponding to the nodes $D_{i-1}\cap D_i$ and $D_i \cap D_{i+1}$. We have assumed $(D_i)^2 = 0$. The tropical curve has three legs: an infinite horizontal one with weight two, an infinite one of slope $-1$, and a finite leg going into the origin with slope $1$. This figure corresponds to a pair of pants contributing to the product of the $(2,0)$ class in the right-hand quadrant with the $(1,1)$ class in the left-hand quadrant, and resulting in the $(1,1)$ class in the right-hand quadrant.
\begin{figure}
  \centering
  \includegraphics[width=3in]{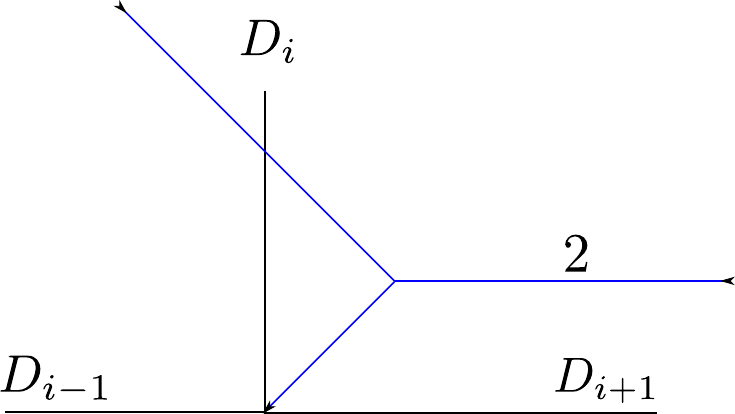}
  \caption{A example of a broken line diagram.}
  \label{fig:broken-line}
\end{figure}

Rather than using any of the more standard techniques of tropical geometry (see for instance \cite{parker}), our approach is almost completely topological, using the torus fibration $\pi : \Sigma \to S$ on the contact hypersurface at infinity, and the Liouville class $A(s) \in H^1(\pi^{-1}(s);\R)$. By projecting a holomorphic curve to $\Sigma$ and intersecting with a generic torus fiber, we obtain a loop. The holomorphicity condition comes in to say that the integral of $A(s)$ over this loop is monotonic as a function of $s$, and this is what allows us to analyze the curve and translate it into a diagram in $U^\trop$, as we shall now explain. 

First consider the general situation of a holomorphic curve $u = (f,a) : T \to \Sigma \times \R$, with several positive and negative punctures asymptotic to Reeb orbits. We are mainly interested in the $\Sigma$-component map $f : T \to \Sigma$. We have the projection $\pi \circ f: T \to S$, a map from the domain Riemann surface $T$ to the circle $S$. Since Reeb orbits map to points in $S$, the map $(\pi \circ f)$ extends continuously to the compactification $\overline{T}$. The level sets $(\pi \circ f)^{-1}(s)$, that is, the intersections of $f(T)$ with the torus fibers of the map $\pi$, induce a singular foliation of $\overline{T}$. The leaves are circles except when $s \in S$ is a critical value of $(\pi \circ f)$, or when $s$ is the image of a puncture point, which henceforth we include in the set of critical values. The fiber $(\pi \circ f)^{-1}(s_0)$ is generically a union of several such circles, so let us consider one component $\rho$. Once we equip $\rho$ with an orientation we obtain a homology class $[\rho]\in H_1(\pi^{-1}(s_0);\Z)$. For $s$ near $s_0$, let $T_{s_0}^s$ denote the parallel transport acting on $H_1(\pi^{-1}(s);\Z)$. Recall the Liouville class $A(s) = [\alpha|_{\pi^{-1}(s)}]\in H^1(\pi^{-1}(s);\R)$. For $s$ near $s_0$, this yields a function 
\begin{equation}
  I_{\rho}(s) = \langle A(s),T_{s_0}^s[\rho] \rangle = \int_{T_{s_0}^s[\rho]} \alpha
\end{equation}

The building block of a holomorphic curve is a \emph{tube} $V$ in $T$, which is to say a compact connected subset that is the union of smooth leaves. For such a tube $V$, the map $(\pi \circ f)|_V : V \to S$ is a circle fibration. The image is some interval $[s_0,s_1]$. Call the boundary leaves $\rho_0$ and $\rho_1$, and orient these as the boundary of $V$. Now, Proposition \ref{prop:positivity-contact-target} implies that $f^*d\alpha$ is a positive two-form on $V$. Thus we obtain
\begin{equation}
  0 < \int_V f^*d\alpha = \int_{\rho_0} f^*\alpha + \int_{\rho_1} f^*\alpha = I_{\rho_0}(s_0) + I_{\rho_1}(s_1)
\end{equation}
Now observe that $\rho_0$ and $\rho_1$ are negatives of each other in the homology of $\pi^{-1}[s_0,s_1]$. Thus we find that $T_{s_0}^s [\rho_0] = -T_{s_0}^s[\rho_1]$ for any $s \in [s_0,s_1]$. Thus we obtain the inequalities
\begin{align}
  \label{eq:monotonicity-left}
  0 <& I_{\rho_0}(s_0) - I_{\rho_0}(s_1)\\
  \label{eq:monotonicity-right}
  0 <& I_{\rho_1}(s_1) - I_{\rho_1}(s_0)
\end{align}
These inequalities require some interpretation. Regard the circle fiber as moving and tracing out the tube. We can think of it either moving to the right (from $s_0$ to $s_1$), or moving to the left (from $s_1$ to $s_0$). Let us make the convention that the circle is oriented as the \emph{boundary of its past}: for the right-moving tube, the circle is oriented like $\rho_1$, and for the left-moving tube, the circle is oriented like $\rho_0$. Now integrate the Liouville class on this loop to obtain the function $I_\rho(s)$. The above inequalities say that $I_\rho(s)$ is \emph{increasing in the direction of motion}. Evidently, this is consistent because reversing the direction of motion also reverses the orientation of the circle.

The tropical analogue of a tube as defined above is a line segment. As motivation, there is an analogue of the preceding inequality coming from elementary plane geometry. Consider any line in $\R^2$ not passing through the origin. At any point along the line, we may compute the dot product between the unit tangent vector (which depends on an orientation of the line) and the unit radial vector pointing away from the origin. This dot product is increasing in the direction of motion (and this fact is independent of the orientation of the line). In the holomorphic-to-tropical correspondence, the radial vector field can roughly be identified with the Liouville class $A(s)$, the unit tangent vector with the class $[\rho]$, and the dot product with the function $I_\rho(s)$.

We wish to associate a line segment in $U^\trop$ to a such tube $V$. First of all, there is a correspondence between the base circle $S$ and the set of rays from the origin in $U^\trop$. For each $s$, the Liouville class $A(s) \in H^1(\pi^{-1}(s);\R)$ determines two subspaces of $H_1(\pi^{-1}(s);\R)$:
\begin{align}
  H_\perp &= \ker \langle A(s), - \rangle\\
  H_\parallel &= \ker \langle A'(s), - \rangle
\end{align}
Theses spaces are rank-one and linearly independent (by the contact condition). There is a natural element $v_{\Reeb}(s) \in H_\parallel$ defined by the condition $\langle A(s), v_{\Reeb}(s) \rangle = 1$, and a positive ray $\R_{\geq 0}\cdot v_{\Reeb}(s)$. Now, the space $U^\trop$ has charts given by quadrants in $H_1$ of the various torus fibers. The correspondence is that $s \in S$ corresponds to the ray in $U^\trop$ that is identified with $\R_{\geq 0}\cdot v_{\Reeb}(s)$ in a chart. The construction of the Liouville class $A(s)$, and particularly the local convexity condition, guarantees that this correspondence is a bijection. Also, this correspondence provides a way to identify the tangent spaces of $U^\trop$ along the ray with $H_1(\pi^{-1}(s);\R)$.


Now we associate a line segment in $U^\trop$ to a tube $V$. For the purposes of this discussion, we allow line segments to be infinite in one or both directions.
\begin{proposition}
  Given a tube $V$, there is a line segment $\ell(V)$ (that is, a path straight with respect to the affine structure) associated to $V$, which is characterized up to radial rescaling in $U^\trop$ by the following properties:
  \begin{enumerate}
  \item $\ell(V)$ lies in the sector of $U^\trop$ corresponding to the interval $\pi(V) = [s_0,s_1] \subset S$, touching each ray sector exactly once.
  \item The tangent vector to $\ell(V)$ is positively proportional to the homology class $\rho$ of the loop that is moving in $V$.
  \end{enumerate}
\end{proposition}

\begin{proof}
The second property requires us to check a consistency condition, since the notion of the ``loop that is moving'' depends up to sign on the orientation of the interval $[s_0,s_1]$. Assuming this, the construction of the segment is simple. The class $\rho$ determines the slope, and we position the line segment in the appropriate sector. The result is well-defined up to radial rescaling.

For concreteness, let us regard the tube as right-moving, from $s_0$ to $s_1$, so that $\rho = \rho_1$. Since we want the line segment to move from the ray for $s_0$ to the ray for $s_1$, a consistency issue arises of whether this tangent vector actually points in the right direction. This can be resolved using the inequality \eqref{eq:monotonicity-right}. At any point $s \in [s_0,s_1]$, the vector $\rho$ can be decomposed uniquely as
\begin{equation}
  \rho = \rho_\perp + \rho_\parallel
\end{equation}
where $\langle A(s),\rho_\perp \rangle = 0$, and $\langle A'(s),\rho_\parallel \rangle = 0$. Let us clarify that, at the chosen point $s$, we fix the decomposition of $\rho$, so the components $\rho_\perp$ and $\rho_\parallel$ are not varying when we differentiate with respect to $s$ (other than by parallel transport between the fibers). The inequality \eqref{eq:monotonicity-right}, taken in the limit $s_0 \to s_1 = s$, implies that $\frac{d}{ds}\langle A(s),\rho \rangle \geq 0$. On the other hand,
\begin{equation}
  \frac{d}{ds}\langle A(s),\rho_{\parallel} \rangle = \langle A'(s),\rho_\parallel \rangle = 0
\end{equation}
So we obtain $\frac{d}{ds}\langle A(s),\rho_\perp \rangle = \langle A'(s),\rho_\perp \rangle \geq 0$. 

The local convexity condition on $A(s)$ implies that $A(s)$ and $A'(s)$ rotate in the same direction with increasing $s$, namely clockwise in our conventions. Thus the rays $\R_{\geq 0}\cdot v_\Reeb(s)$ are rotating clockwise, and the condition $\langle A'(s), \rho_\perp\rangle \geq 0$ implies that $\rho_\perp$ lies on the clockwise side of the ray $\R_{\geq 0} \cdot v_\Reeb(s)$, as needed for the picture to be consistent. See Figure \ref{fig:rotation}.
\end{proof}

We can also say when the line segment will have an infinite direction. If it happens that $\langle A'(s), \rho_\perp \rangle = 0$, then actually $\rho_\perp = 0$, and $\rho = \rho_\parallel$ is a vector parallel to the ray. It is not possible for this to occur at an interior point $s \in (s_0,s_1)$, since Lemma \ref{lem:convex-maximum-property} says that the functional $I_\rho(s) = \langle A(s),\rho\rangle$ has a non-degenerate maximum or minimum at such a point, and so the inequality \eqref{eq:monotonicity-right} will be violated on one side of such a point or the other. It is possible for $\rho = \rho_\parallel$ to occur at a boundary point. If this occurs at $s_1 \in [s_0,s_1]$, then the inequality \eqref{eq:monotonicity-right} implies that $I_\rho(s)$ has maximum at $s_1$, so $\rho$ is positively proportional to $v_\Reeb(s_1)$. In this case, the line segment will actually be infinite and parallel to the ray $\R_{\geq 0}\cdot v_\Reeb(s_1)$. If $\rho = \rho_\parallel$ occurs at $s_0 \in [s_0,s_1]$, then the situation is reversed: $\rho$ is negatively proportional to $v_\Reeb(s_0)$, and the segment is infinite and anti-parallel to $\R_{\geq 0}\cdot v_\Reeb(s_0)$.

\begin{figure}
  \centering
  \includegraphics[width=2.5in]{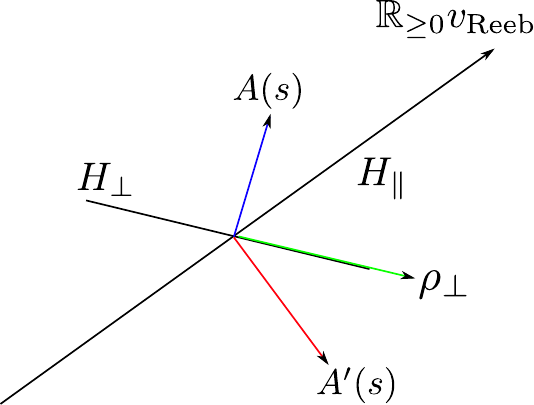}
  \caption{The relationships between $H_\parallel$, $H_\perp$, $A(s)$, $A'(s)$ and $\rho_\perp$.}
  \label{fig:rotation}
\end{figure}

The line segment also has a \emph{weight}, which is the divisibility of the tangent vector $\rho$ in the lattice $H_1(\pi^{-1}(s);\Z)$.

Note that the line segment $\ell(V)$ associated to a tube $V$ is only well-defined up to a rescaling centered at the origin of $U^\trop$ (which should not be interpreted as rescaling the tangent vector $\rho$). When constructing more complicated tropical curves, we can make use of this freedom to make different segments meet each other.

The general prescription for associating a tropical curve to a holomorphic curve $u: T\to \Sigma$ is to decompose the domain $T$ into a collection of tubes. These meet each other over the critical values of the map $\pi \circ u: T \to S$ (we regard the images of the punctures as critical values). At each critical value, several of the tubes interact, and we draw the corresponding segments as having a common endpoint. If necessary we can rescale some of the segments so that they all meet at the same point. In principle, this sort of arbitrary choice can create a consistency issue if the resulting graph contains a cycle, but when considering genus zero operations such as the product, the graph will be a tree.

 There is a \emph{balancing condition} enforced by the topology of $T$ itself. When there is no puncture involved, the balancing condition says that the integral tangent vectors of the segments meeting at the vertex, taken in the direction away from the vertex, with the appropriate weights, sum to zero. This is because the integer tangent vectors multiplied by the weights correspond to the classes of the loops in the various tubes, and the preimage of a small interval around the critical value yields a two-chain bounded by the sum of these loops. The situation is slightly different if there are punctures involved. If the critical value is at $s \in S$, then the punctures must be asymptotic to Reeb orbits at this value of $s$. The homology class of such a loop is proportional to $v_\Reeb(s)$, which is the radial direction. Therefore, the balancing condition holds if we add radial segments emanating from the vertex in question, pointing into the origin in the case of negative punctures, and away from the origin in the case of positive punctures. This gives us the \emph{broken line diagram of $u: T \to V$}. 
 
\begin{remark}
  In this paper, we will only use broken line diagrams associated genus zero curves appearing in the operations involved in symplectic cohomology, but one could ask if such a diagram could be associated to a more general class of curves. Indeed, the broken line diagram, as we have defined it, is a topological approximation to the tropicalization of the curve, which should exist more generally. In the case of a truly arbitrary curve, say without a bound on its energy, one would have to worry about the possibility that the diagram could have infinitely many edges. Also, as we have said, in the case of higher genus curves one would need to worry about whether the various segments associated to the tubes can really be made to meet inside $U^{\trop}$. We will set aside these questions for now since they are not necessary for this paper.
\end{remark}

The prescription described above gives us a diagrammatic way to represent curves contained in the cylindrical end $\Sigma \times \R$ of $U$, and it is possible to read off the homology class of such a curve from this diagram. 

\begin{proposition}
  \label{prop:broken-line-to-homology-class}
  The homology class associated to a broken line diagram may be obtained as a sum of local contributions computed as follows. At each point where the broken line diagram crosses a ray corresponding to $v_{\Reeb}(s_{i})$, take the tangent vector $\rho$ to the diagram and project it into the rank-one lattice $TU^{\trop}(\Z)/v_{\Reeb}(s_{i})$. If $m \geq 1$ denotes the divisibility of the image of $\rho$ in this lattice, then the local contribution is $m D_{i}$. In the case where the intersection of the diagram with the ray corresponding to $v_{\Reeb}(s_{i})$ is singular, then we compute the same quantity slightly to either side of the ray (and the result is independent of this choice).
\end{proposition}

\begin{proof}
  Lemma \ref{lem:symplectization-level} shows that a curve in $\Sigma \times \R$ represents a homology class that is a non-negative combination of the $D_i$. The proof furthermore shows that the coefficients in this decomposition can be obtained projecting the curve onto the divisor $D_i$ near the region where the Reeb orbits are the normal circles to $D_i$. The point is that this degree can also be computed by looking at the homology class $\rho \in H_1(\pi^{-1}(s);\Z)$ carried by the tubes entering this region. Let $s_i \in S$ be the point where the Reeb orbits are normal circles to $D_i$; they represent the homology class $v_\Reeb(s_i)$; in particular, $v_\Reeb(s_i)$ is an integral class. In the portion of the fibration over $(s_i -\epsilon, s_i+\epsilon)$, choose an integral basis of sections of $H_1(\pi^{-1}(s);\Z)$ consisting of $v_\Reeb(s_i)$ and another class $v_b$. Of course, in the choice of $v_b$ there is freedom to add an integer multiple of $v_\Reeb(s_i)$ or reverse the sign. Now decompose the homology class $\rho$ as $\rho = nv_\Reeb(s_i) + mv_b$. The projection $r$ of $\pi^{-1}(s_i-\epsilon, s_i+\epsilon)$ onto $D_i$ collapses the circle $v_\Reeb(s_i)$, and maps $v_b$ to $r(v_b)$, the homology class of a loop on the divisor. Thus it maps $\rho = nv_\Reeb(s_i) + mv_b$ to $mr(v_b)$. The arguments of Lemma \ref{lem:symplectization-level} show that the projection of $\rho$ to $r(v_b)$ has the same degree at every point, either $1$ or $-1$ depending on orientations, so that the total degree is either $m$ or $-m$. Since we already know that the degree is non-negative, we find that it equals $|m|$, and tube carrying the homology class $\rho$ contributes $|m|D_i$ to the total homology class of curve. It is clear that $|m|$ is also the divisibility of $\rho$ in the rank one lattice $TU^{\trop}(\Z)/v_{\Reeb}(s_{i})$.

  If the intersection of the graph with the ray $v_\Reeb(s_i)$ is singular (if it coincides with at vertex of the graph), then we compute the same quantity slightly to either side of the ray, and the balancing condition guarantees that the result is well-defined.
\end{proof}

\begin{example}
  Consider the broken line diagram shown in Figure \ref{fig:broken-line}. The only ray that it crosses is the one corresponding to $D_i$. When it crosses this ray, its tangent vector is $(1,-1)$, which projects to $1$ in $\Z^2/(0,1)$. Therefore $m = 1$, and the homology class associated to this curve is $[D_i]$.
\end{example}

\subsection{Degeneration to the vertex}
\label{sec:vertex}

Let $n$ denote the number of irreducible components of $D$. The \emph{vertex} $\mathbb{V}_n$ is the singular algebraic surface consisting of $n$ copies of $\Af^2$ intersecting along coordinates axes, forming a cycle. Thus if $x_1,\dots, x_n$ are variables, and $\Af^2_{x_i,x_j} = \Spec \K[x_i,x_j]$, we have
\begin{equation}
  \mathbb{V}_n = \Af^2_{x_1,x_2}\cup \Af^2_{x_2,x_3} \cup \cdots \cup \Af^2_{x_n,x_1}
\end{equation}
The ring of functions on $\mathbb{V}_n$ is generated by the variables $x_1,\dots,x_n$, subject to the conditions that $x_i$ and $x_{i+1}$ generate a polynomial algebra (with indices taken modulo $n$), and that two non-consecutive variables (such as $x_1$ and $x_3$) multiply to zero. 

As shown in \cite{ghk}, there is a relationship between a log Calabi--Yau pair $(Y,D)$ such that $D$ has $n$ irreducible components and a deformation of the vertex $\mathbb{V}_n$. In the context of this paper, we may state the relationship as follows. Recall that, since $P \subset H_2(Y;\Z)$ is a strictly convex cone, the monoid ring $\K[P]$ has a maximal ideal $\mathfrak{m}_P$ generated by all nonzero elements of $P$. Being a $\K[P]$--algebra, the degree-zero symplectic cohomology defines a family $\Spec SH^0(U;\K[P]) \to \Spec \K[P]$.

Now we can state and prove the result alluded to in the introduction. It states that the spectrum of the symplectic cohomology ring $SH^0(U;\K[P])$ is a deformation of the vertex $\mathbb{V}_n$. The analogous result with the ring of theta functions instead of symplectic cohomology was proven in \cite{ghk}. Thus, this result can be regarded as further evidence for the correspondence between the ring of theta functions and the symplectic cohomology of $U$.

\begin{theorem}
  \label{thm:degeneration-to-vertex}
  The fiber at $\mathfrak{m}_P \in \Spec \K[P]$ of $\Spec SH^0(U;\K[P])$ is isomorphic to $\mathbb{V}_n$. The variables $x_i$ correspond to periodic orbits encircling once each of the irreducible components of $D$.
\end{theorem}

Since $SH^0(U;\K[P])$ is a free $\K[P]$--module, taking the fiber at $\mathfrak{m}_P$ yields a free $\K$--vector space on the same basis. The difference is in the product structure, since in the fiber at $\mathfrak{m}_P$ we must set to zero any contributions to the product that involve non-zero elements of $P$.

In considering such products, we begin again with an analysis of the possible limit configurations under the neck--stretching process. Lemmas \ref{lem:interior-level} and \ref{lem:symplectization-level} imply that each level individually represents a class in $P$. Since $P$ is strictly convex, the only way that the total homology class can be zero is if each level individually represents the zero homology class. In fact, Lemma \ref{lem:interior-level} shows that the interior level actually lives in $\mathfrak{m}_P$. 

On the other hand, there are non-zero contributions to the product in the zero homology class. Near each node of $D$, there is a collection of periodic orbits corresponding to the integral points in a quadrant. Just as in the case of $(\C^\times)^2$, there are pairs-of-pants in a neighborhood of a node, and these give rise to a product that corresponds to addition of integral points. In $U$ itself, there may be other contributions to the product, and the point is to show that these others have nonzero homology classes. What is needed is an enhancement of Lemma \ref{lem:symplectization-level}, dealing with a pair-of-pants representing the zero homology class, saying that such curves cannot connect periodic orbits near different nodes of $D$.

Recall (as in the proof of Lemma \ref{lem:symplectization-level}), that for each divisor $D_i$, there is a point $s_i \in S$, such that $\pi^{-1}(s_i) \subset \Sigma$ is a torus in which the Reeb orbit is the normal circle to $D_i$. The next Lemma says that these points act as barriers that curves in the symplectization cannot cross unless they have a non-zero homology class in $Y$.

\begin{lemma}
  \label{lem:zero-homology-class}
  Let $u : T \to \Sigma \times \R$ be a inhomogeneous pseudo-holomorphic curve in the symplectization, with connected genus zero domain $T$, such that $\phi(u) = 0\in H_2(Y;\Z)$. Then there is some $i$ such that $u$ is localized near the node $D_i \cap D_{i+1}$: precisely, that $\pi(u(T))$ is contained in the interval $[s_i,s_{i+1}]$.
\end{lemma}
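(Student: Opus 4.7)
The plan is to combine a topological observation coming from $\mathrm{genus}(T) = 0$ with the positivity built into the proof of Lemma~\ref{lem:symplectization-level}. The topological part shows that the image of $\pi \circ u$ in $S$ is a proper arc; the positivity part shows that any barrier point $s_i$ sitting in the interior of this arc would force $m_i \geq 1$, contradicting $\phi(u) = 0$.

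First I would extend $\pi \circ u : T \to S$ to a continuous map $\overline{\pi \circ u} : \overline{T} \to S$, where $\overline{T}$ is the compact sphere obtained by filling in each puncture of $T$ with a point. The extension is well-defined and continuous because at each puncture $u$ is asymptotic to a Reeb orbit lying in a single fiber of $\pi$, so $\pi \circ u$ approaches the corresponding point of $S$. Since $T$ has genus zero and finitely many punctures, $\overline{T} \cong S^2$, and in particular is simply connected. Its image in $S \cong S^1$ therefore lifts through the universal cover $\R \to S^1$, so it is a proper closed connected arc $[a,b] \subsetneq S$.

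Next I would argue by contradiction, assuming some barrier point $s_i$ lies in the open interior $(a,b)$. By Lemma~\ref{lem:symplectization-level}, $\phi(u) = \sum_j m_j [D_j]$ with all $m_j \geq 0$; since $\phi(u) = 0$ and the classes $[D_j]$ are independent in $H_2(N(D);\Z)$, we would have each $m_j = 0$. I would show, to the contrary, that $m_i \geq 1$. Because $s_i$ is strictly interior to the image, some tube $V \subset T$ in the sense of section~\ref{sec:drawing} has base interval $[s_-, s_+]$ with $s_i \in (s_-, s_+)$. The tangent class $\rho \in H_1(\pi^{-1}(s_i);\Z)$ of $V$ cannot be proportional to $v_{\Reeb}(s_i)$: if it were, applying Lemma~\ref{lem:convex-maximum-property} to the two sub-tubes $[s_-, s_i]$ and $[s_i, s_+]$ would make $I_\rho$ have a non-degenerate extremum at the interior point $s_i$, violating the tube monotonicity inequality \eqref{eq:monotonicity-right} on one side of $s_i$. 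Hence the projection of $\rho$ into the rank-one quotient $H_1(\pi^{-1}(s_i);\Z)/\Z\cdot v_{\Reeb}(s_i)$ is non-zero. By the divisibility computation at the end of section~\ref{sec:drawing} (equivalently, the local-degree computation in the proof of Lemma~\ref{lem:symplectization-level}), this tube contributes a positive integer to $m_i$; all other tubes contribute non-negatively; so $m_i \geq 1$, a contradiction.

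Concluding: the open arc $(a,b)$ contains no barrier point, so, since $[a,b] \subsetneq S$ and the points $s_1,\dots,s_n$ partition $S$ into the closed intervals $[s_i, s_{i+1}]$ (indices mod $n$), the arc $[a,b]$, and hence $\pi(u(T))$, lies in one of these intervals. The main conceptual obstacle has been dealt with already in section~\ref{sec:drawing}: the worry that $u$ could graze $\pi^{-1}(s_i)$ tangentially without contributing to $m_i$ is precisely what the local convexity of $A(s)$ (Lemma~\ref{lem:convex-maximum-property}) was engineered to rule out; without that input the interior crossing of a barrier point would not automatically be detected by $\phi(u)$.
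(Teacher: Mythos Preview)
Your argument is correct and matches the paper's approach: the proof there simply invokes the broken-line diagrams of section~\ref{sec:drawing}, and the remark immediately following it spells out exactly your monotonicity-versus-maximum contradiction at a barrier point. One small slip worth fixing: simple connectivity of $\overline{T}$ gives you a lift to $\R$ with compact image $[a,b]\subset\R$, but not a priori that the projection to $S$ is a \emph{proper} arc; you should carry out the contradiction entirely in the lift (showing no lift of any $s_i$ lies in the open interval $(a,b)$) and only then conclude that $[a,b]$ sits between two consecutive lifts of the barrier points, hence projects into some $[s_i,s_{i+1}]$.
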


\begin{proof}
  We use the broken line diagrams described in Section \ref{sec:drawing}. The homology class being zero implies that the broken-line diagram of such a $u$ cannot cross any of the rays $v_\Reeb(s_i)$ corresponding to the points where the Reeb orbit is a normal circle to $D_i$. Thus the broken line diagram lies entirely inside one quadrant of $U^\trop$, and the conclusion follows.
\end{proof}

\begin{remark}
  Alternatively, and what amounts to the same thing, we can argue that, if the curve touches the torus $\pi^{-1}(s_i)$, then in order for there to be no term of $D_i$ in $\phi(u)$, the intersection of $u(T)$ with $\pi^{-1}(s)$ must be homologous to a multiple of $v_\Reeb(s_i)$, for $s$ near $s_i$. The inequalities \eqref{eq:monotonicity-left} or \eqref{eq:monotonicity-right} applied to the functional $I_{v_\Reeb(s_i)}$ say that this functional is strictly monotonic along the curve. On the other hand $I_{v_\Reeb(s_i)}$ has maximum at $s_i$, so the curve cannot pass through this point.
\end{remark}

To finish the proof of Theorem \ref{thm:degeneration-to-vertex}, we use that the contributions to the symplectic cohomology product that lie entirely in the neighborhood of the one of the nodes $D_i \cap D_{i+1}$ correspond to the products in $SH^0((\C^\times)^2;\K)$, where we know that the two elements $\theta_i$ and $\theta_{i+1}$ corresponding to the loops around the two divisors generate a polynomial algebra. Lemma \ref{lem:zero-homology-class} shows that all other products vanish, so we recover the ring of functions on $\mathbb{V}_n$ from the symplectic cohomology by setting all products with nonzero classes in $P$ to zero.

\begin{remark}
  The results of this section apply to the case where $Y$ is a toric surface, and $D$ is the toric boundary divisor. In that case $U \cong (\C^\times)^{2}$, and $SH^{0}(U;\K)$ is isomorphic to the ring of Laurent polynomials. Even in this case, the deformation described above is not trivial, and it depends on the toric variety $Y$. This is due to the fact that the curves contributing to products between generators living near different nodes of $D$ carry nontrivial classes in $P$. For instance, if $Y = \PP^{1}\times \PP^{1}$ and $D$ is the toric boundary, then $\K[P]= \K[q^{a_{1}},q^{a_{2}}]$, where $a_{i}$ is the homology class of the $i$-th factor. Then $SH^{0}(U;\K[P])$ is generated by four orbits $x,y,x',y'$, linking each of the four components of $D$, and the relations are
\begin{equation}
  xx' = q^{a_{1}},\quad yy' = q^{a_{2}}.
\end{equation}
Setting $q^{a_{1}}=q^{a_{2}}=1$, one obtains the ring of Laurent polynomials, but setting $q^{a_{1}}=q^{a_{2}}=0$, one obtains the ring of functions on
\begin{equation}
  \mathbb{V}_{4} = \Af^{2}_{x,y} \cup \Af^{2}_{y,x'}\cup \Af^{2}_{x',y'} \cup \Af^{2}_{y',x}.
\end{equation}
\end{remark}

\bibliographystyle{amsplain}
\bibliography{symplectic}

\end{document}